\def\<{\langle}
\def\>{\rangle}
\def\c{\cdot}
\newtheorem{thm}{Theorem}[section]
\newtheorem{lem}[thm]{Lemma}
\newtheorem{cor}[thm]{Corollary}
\newtheorem{pro}[thm]{Proposition}
\newtheorem{ex}[thm]{Example}
\theoremstyle{definition}
\newtheorem{defi}{Definition}[section]
\theoremstyle{remark}
\newtheorem{rmk}{Remark}[section]
\begin{document}
\title{\bf On $n$-pre-Lie algebras and dendrification of $n$-Lie algebras}
\author{\bf T. Chtioui, A. Hajjaji, S. Mabrouk, A. Makhlouf}
\author{{ Taoufik Chtioui$^{1}$
 \footnote {  E-mail: chtioui.taoufik@yahoo.fr}
,\  Atef Hajjaji$^{1}$
    \footnote {E-mail:  atefhajjaji100@gmail.com}
,\  Sami Mabrouk$^{2}$
 \footnote {   E-mail: mabrouksami00@yahoo.fr}
\ and Abdenacer Makhlouf$^{3}$
 \footnote { Corresponding author,  E-mail: Abdenacer.Makhlouf@uha.fr}
}\\
{\small 1.  University of Sfax, Faculty of Sciences Sfax,  BP
1171, 3038 Sfax, Tunisia} \\
{\small 2.  University of Gafsa, Faculty of Sciences Gafsa, 2112 Gafsa, Tunisia}\\
{\small 3.~ IRIMAS - Département de Mathématiques, 6, rue des frères Lumière,
F-68093 Mulhouse, France}}
\date{}
\maketitle

\begin{abstract}
The main purpose of this paper  is to introduce the notion of $n$-L-dendriform algebra which can be seen as a dendrification of $n$-pre-Lie algebras by means of  $\mathcal{O}$-operators. We investigate the representation theory of $n$-pre-Lie algebras and provide some related constructions. Furthermore, we introduce the notion of  phase space of a $n$-Lie algebra and show
that a $n$-Lie algebra has a phase space if and only if it is sub-adjacent to a $n$-pre-Lie algebra. Moreover, we present a procedure to construct $(n + 1)$-pre-Lie
algebras from $n$-pre-Lie algebras equipped with a generalized trace function. 
\end{abstract}

\textbf{Key words}: $n$-pre-Lie algebra, $n$-L-dendriform, representation, symplectic structure.

\textbf{Mathematics Subject Classification}: 17A40,17A42,17B15.

\tableofcontents

\numberwithin{equation}{section}
\section{Introduction}
The notion
of an $n$-Lie algebra, or a Filippov algebra was introduced in \cite{Filippov}. It is the algebraic structure corresponding
to Nambu mechanics \cite{Nambu,Takhtajan}. Notice that $n$-Lie algebras, or more generally, $n$-Leibniz algebras, have attracted
attention from several fields of mathematics and physics due to their close connection with dynamics,
geometries as well as string theory \cite{Bagger1,BaggerLambert}, see also \cite{Casas,Cherkis,Figueroa1,Ho,Papadopoulos}. For example, the structure of
$n$-Lie algebra is applied to  study  supersymmetry and gauge symmetry transformations of the
word-volume theory of multiple $M2$-branes and the generalized identity for $n$-Lie algebras can be regarded
as a generalized Plucker relation in the physics literature. See the review article \cite{Azcarraga} for more details.

A dendriform algebra is a module equipped with two binary products whose sum is associative. This
concept was introduced by Loday in the late 1990s in the study of periodicity in algebraic K-theory \cite{L1}.
Several years later, Loday and Ronco introduced the concept of  tridendriform algebra from their study of
algebraic topology \cite{AL1}. It is a module with three binary operations whose sum is associative. Afterward,
quite  few similar algebraic structures were introduced, such as the quadri-algebra and ennea algebra \cite{AL}.
The notion of splitting of associativity was introduced by Loday to describe this phenomena in general, for
the associative operation (see \cite{L6,L5,L4}, for more details).

Pre-Lie algebras are a class of nonassociative algebras coming from the study of convex homogeneous
cones, affine manifolds and affine structures on Lie groups, and the cohomologies of associative algebras.
See the survey \cite{burde} and the references therein for more details. The main properties  of a pre-Lie algebra is that
the commutator gives rise to a Lie algebra and the left multiplication gives rise to a representation of the
commutator Lie algebra. Conversely, a relative Rota-Baxter operator action on a Lie algebra gives rise to a
pre-Lie algebra \cite{Golubchik,Kupershmidt} and thus the pre-Lie algebra can be seen as the underlying algebraic structure of
a relative Rota-Baxter operator. In \cite{Ming}, the authors introduce the notion of  $n$-pre-Lie algebra, which gives
a $n$-Lie algebra naturally and its left multiplication operator gives rise to a representation of this $n$-Lie
algebra. A $n$-pre-Lie algebra can also be obtained through the action of a relative Rota-Baxter operator
on a $n$-Lie algebra. For ($n=3$), see \cite{Bai1} for more details. 

Note that the notion of a symplectic structure on a  $n$-Lie algebra was introduced in \cite{Ming} and
it is shown that the underlying structure of a symplectic $n$-Lie algebra is a quadratic $n$-pre-Lie
algebra. In this paper, we introduce the notion of a phase space of a $n$-Lie algebra $A$, which is a symplectic $n$-Lie
algebra $A\oplus A^*$ satisfying some conditions, and show that a $n$-Lie algebra has a phase space if and
only if it is sub-adjacent to a $n$-pre-Lie algebra. We also introduce the notion of  Manin triple
of $n$-pre-Lie algebras and show that there is a one-to-one correspondence between Manin triples of
$n$-pre-Lie algebras and phase spaces of $n$-Lie algebras.

In \cite{Awata}, the authors introduce a realization of the quantum Nambu bracket in terms
of matrices (using the commutator and the trace of matrices). This construction was
generalized in \cite{Makhlouf1} to the case of any Lie algebra where the commutator is replaced
by the Lie bracket, and the matrix trace is replaced by linear forms having similar
properties, that led to a so  called $3$-Lie algebras induced by Lie algebras. This construction
is generalized to the case of $n$-Lie algebras in \cite{Makhlouf2},  see also \cite{Makhlouf3}.

The following is an outline of the paper. In Section 2, we summarize some definitions and known results
about $n$-Lie algebras and $n$-pre-Lie algebras which will be useful in the sequel. Section 3 is dedicated to  study representations of $n$-pre-Lie algebras. In Section 4,
we introduce the notion of  phase space of a $n$-Lie algebra and show that a $n$-Lie algebra has a
phase space if and only if it is sub-adjacent to a $n$-pre-Lie algebra. We also introduce the notion of
 Manin triple of $n$-pre-Lie algebras and study its relationship with phase spaces of $n$-Lie algebras. Next, we provide a construction
procedure of $(n + 1)$-pre-Lie algebras starting from an $n$-pre-Lie algebra
and a trace map. In Section 5, we introduce the
notion of $n$-L-dendriform algebra. We establish that it has two associated $n$-pre-Lie algebras (horizontal and
vertical). They have the same sub-adjacent $n$-Lie algebra. In addition, the left multiplication operator of the
first operation (called north-west: $\nwarrow$) and the right multiplication operator of the second operation (called
north-east:$\nearrow$ ) lead to  a bimodule of the associated horizontal $n$-pre-Lie algebra.

Throughout this paper $\mathbb{K}$ is a field of characteristic $0$ and all vector spaces are over $\mathbb{K}$ and finite dimensional.
\section{Preliminaries and Basics}
\label{sec:bas}

In this section, we give some preliminaries and basic results on $n$-Lie algebras and $n$-pre-Lie algebras.
\begin{defi}\cite{Filippov}
  A $n$-Lie algebra  consists  of a vector space $A$ equipped with a skew-symmetric $n$-linear map $[\cdot,\cdots,\cdot]:
 A^n\rightarrow A$ such that the following Filippov-Jocobi identity holds (for $x_i,y_i\in A, 1\leq i\leq {n}$)
\begin{align}\label{eq:de1}
&[x_1,\cdots,x_{n-1},[y_1,\cdots,y_{n}]]
=\sum_{i=1}^{n}[y_1,\cdots,y_{i-1},[x_1,\cdots,x_{n-1},y_i],y_{i+1},\cdots,y_{n}].
\end{align}
\end{defi}
Recall that the $n$-linear bracket is said to be skew-symmetric if for any permutation  $\sigma$ in the permutation group $S_n$ and $x_1, \cdots , x_n \in A$, we have $$[x_{\sigma(1)},\cdots, x_{\sigma(n)}
] = Sgn(\sigma)[x_1, \cdots, x_n],$$
where $Sgn(\sigma)$ is the signature of $\sigma$.

The Filippov-Jacobi identity can expressed as follows. For $X=(x_1,\cdots,x_{n-1})\in A^{n-1}$, the operator
\begin{align}\label{eq:adjoint}
ad(X):A\to A, \quad ad(X)(y):=[x_1,\cdots,x_{n-1},y], \quad \forall y\in A,
\end{align}
is a derivation in the sense that
\begin{equation}
ad(X)([y_1,\cdots,y_{n}])=\sum_{i=1}^{n}[y_1,\cdots,y_{i-1},ad(X)(y_i),y_{i+1},\cdots,y_{n}].
\end{equation}
A morphism between $n$-Lie algebras is a linear map that preserves the $n$-Lie brackets.
\begin{ex}\cite{Filippov} Consider $(n+1)$-dimensional $n$-Lie algebra $A$ generated by $(e_1,\cdots,e_{n+1})$  with  the following multiplication
\begin{equation*}
[e_1,\cdots,\widehat{e}_i,\cdots,e_n]= e_i,
\end{equation*}
where $\widehat{e}_i$ means that the element $e_i$ is omitted.
\end{ex}

The notion of a representation of an $n$-Lie algebra was introduced in \cite{repKasymov},  see also \cite{rep}.
\begin{defi}\label{defi:rep}
 A representation of a $n$-Lie algebra $(A,[\cdot,\cdots,\cdot])$ on a vector space $V$ is a skew-symmetric linear map $\rho: \wedge^{n-1}A\rightarrow gl(V)$ satisfying
\begin{align}\label{RepNLie1}
&\rho ([x_1,\cdots,x_n],y_1,\cdots,y_{n-2})=
\sum_{i=1}^{n}(-1)^{n-i}\rho(x_1,\cdots,\widehat{x}_i,\cdots,x_n)\rho(x_i,y_1,\cdots,y_{n-2}),\\&[\rho (x_1,\cdots,x_{n-1}),\rho(y_1,\cdots,y_{n-1})]
=\label{RepNLie2}\sum_{i=1}^{n-1}\rho(y_1,\cdots,y_{i-1},[x_1,\cdots,x_{n-1},y_i],y_{i+1},\cdots,y_{n-1}),\end{align}

for $x_i,y_i\in A, 1\leq i\leq n$.\\ We refer to the representation  by the pair  $(V,\rho)$. We say also that  $V$ is an $A$-module.
Let $\rho(x_1, \cdots , x_{n-1})= ad (x_1,\cdots , x_{n-1})$ for $x_1, \cdots, x_{n-1}\in A$. Then $(A, ad)$ is an $A$-module
and is called the adjoint module of $A$.
\end{defi}
\begin{lem}\label{lem:sepro}
$(V,\rho)$ is a representation of a $n$-Lie algebra $(A,[\cdot,\cdots,\cdot])$ if and only if there is a $n$-Lie algebra structure
on the direct sum $A\oplus V$ of the underlying vector spaces $A$ and $V$ given by
\begin{equation}\label{eq:sum}
[x_1+u_1,\cdots,x_n+u_n]_{A\oplus V}=[x_1,\cdots,x_n]+\sum_{i=1}^{n}(-1)^{n-i}\rho(x_1,\cdots,\widehat{x}_i,\cdots,x_n)(u_i),
\end{equation}
for $x_i\in A, u_i\in V, 1\leq i\leq n$, and $\widehat{x}_i$ means that $x_i$ is omitted. We denote it by $A\ltimes_\rho V.$
\end{lem}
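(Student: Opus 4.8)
The statement is a split-extension (semidirect sum) result, so the plan is to read off both implications simultaneously from a single expansion of the Filippov--Jacobi identity on $A\oplus V$. The structural observation that organizes the whole computation is that the bracket \eqref{eq:sum} has its $A$-component depending only on the $A$-parts of the entries, while its $V$-component is homogeneous of degree one in the $V$-parts (each summand $(-1)^{n-i}\rho(\cdots\widehat{x}_i\cdots)(u_i)$ is linear in a single $u_i$). Consequently every iterated bracket of elements $x_j+u_j$ decomposes as a term lying in $A$, built purely from the $A$-parts, plus a term lying in $V$ that is linear in the $u_j$.

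Before touching the Jacobi identity I would first check that \eqref{eq:sum} is $n$-linear and skew-symmetric. Multilinearity is immediate; skew-symmetry of the $A$-component is inherited from $[\cdot,\cdots,\cdot]$, while for the $V$-component one verifies invariance (up to sign) under an adjacent transposition of two entries $x_k+u_k$ and $x_{k+1}+u_{k+1}$: for the summands whose surviving $V$-part is untouched, the two swapped arguments of $\rho$ produce a sign by skew-symmetry of $\rho\colon\wedge^{n-1}A\to gl(V)$, whereas the two summands whose surviving $V$-part is $u_k$ or $u_{k+1}$ swap into each other and their weights $(-1)^{n-k}$ and $(-1)^{n-k-1}$ differ by a sign. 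Both effects yield exactly the required overall sign change.

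The heart of the argument is to expand both sides of \eqref{eq:de1} for the bracket \eqref{eq:sum}, with $X_j=x_j+\xi_j$ and $Y_j=y_j+\eta_j$, and compare components. Matching the $A$-components reproduces \eqref{eq:de1} for $(A,[\cdot,\cdots,\cdot])$ itself, accounting for the $n$-Lie axiom on $A$. The $V$-component is linear in the family $(\xi_1,\dots,\xi_{n-1},\eta_1,\dots,\eta_n)$, so it splits into the part linear in the $\xi_k$ (isolated by setting all $\eta_j=0$) and the part linear in the $\eta_j$ (isolated by setting all $\xi_k=0$), and each must vanish separately. Collecting the coefficient of a fixed $\xi_k$ yields, after using the skew-symmetry of $\rho$ to reorder entries, precisely \eqref{RepNLie1}; collecting the coefficient of a fixed $\eta_m$ yields, after relabelling the $(n-1)$-tuple $(y_1,\dots,\widehat{y}_m,\dots,y_n)$, precisely \eqref{RepNLie2}. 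Since every step is an equivalence, running it in both directions delivers the two implications at once.

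I expect the only genuine obstacle to be bookkeeping rather than conceptual difficulty: correctly tracking the signs $(-1)^{n-i}$ generated when a $V$-entry is carried into the last slot of $\rho$, together with the comparison signs coming from the skew-symmetry of $\rho$ when the two sides present their arguments in different orders. Organizing the expansion according to \emph{which} entry carries the surviving $V$-part---a component of the inner bracket $Y_i$ versus an outer entry $X_k$---keeps the two families cleanly separated and, I believe, reduces the sign matching to a routine check.
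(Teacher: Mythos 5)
Your proposal is correct and is the standard argument: the paper states this lemma in the preliminaries without proof (it is a recalled result on semidirect products of $n$-Lie algebras), and the componentwise expansion you describe --- skew-symmetry via adjacent transpositions, the $A$-component of Filippov--Jacobi reproducing the bracket on $A$, the part of the $V$-component linear in the outer $\xi_k$ giving \eqref{RepNLie1} and the part linear in the inner $\eta_m$ giving \eqref{RepNLie2} --- is exactly how one proves it. The only point worth making explicit for the converse direction is that skew-symmetry of the bracket on $A\oplus V$ also forces skew-symmetry of $\rho$ in its $n-1$ arguments, which your transposition computation already yields when read in reverse.
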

\ \ \ Let $(V,\rho)$ be a representation of a $n$-Lie algebra $A$ and $V^{*}$ be the dual space of $V$. Define $\rho^{*}:\wedge^{n-1}A\rightarrow gl(V^{*})$ by
\begin{equation}\label{eq:dual}
\langle\rho^{*}(x_1,\cdots,x_{n-1})\xi,v\rangle=-\langle\xi,\rho(x_1,\cdots,x_{n-1})v\rangle,
\end{equation}
$\forall \xi\in V^{*},\;x_i\in A,1\leq i\leq{n-1},\;v\in V.$
\begin{lem}\label{lem:rep}
Let $(V,\rho)$ be a representation of a $n$-Lie algebra $(A,[\cdot,\cdots,\cdot])$. Then $(V^{*},\rho^{*})$ is a representation of $(A,[\cdot,\cdots,\cdot])$, which is called the dual
representation.
\end{lem}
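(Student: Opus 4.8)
The plan is to verify directly that $\rho^*$ satisfies the two defining identities \eqref{RepNLie1} and \eqref{RepNLie2} of Definition \ref{defi:rep}; skew-symmetry of $\rho^*$ in its $n-1$ arguments is inherited from that of $\rho$ together with the bilinearity of the pairing, so it needs no separate comment. The mechanism throughout is to apply each side, as an operator on $V^*$, to an arbitrary $\xi\in V^*$, pair the result with an arbitrary $v\in V$, and use the defining relation \eqref{eq:dual} to transfer every occurrence of $\rho^*$ onto $\rho$. The one structural feature to keep in mind is that a composition $\rho^*(X)\rho^*(Y)$, once paired, becomes $\langle\xi,\rho(Y)\rho(X)v\rangle$: passing to the dual reverses the order of the two operators and cancels the two signs coming from \eqref{eq:dual}.

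I would dispatch \eqref{RepNLie2} first, since it closes immediately. Pairing its left-hand side gives $-\langle\xi,[\rho(x_1,\cdots,x_{n-1}),\rho(y_1,\cdots,y_{n-1})]v\rangle$, because the order reversal turns the commutator into its negative, while pairing the right-hand side produces $-\langle\xi,\sum_i\rho(y_1,\cdots,[x_1,\cdots,x_{n-1},y_i],\cdots,y_{n-1})v\rangle$; the two agree by \eqref{RepNLie2} for $\rho$.

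The identity \eqref{RepNLie1} is the crux. After pairing, verifying it for $\rho^*$ amounts to proving
\[
-\rho([x_1,\cdots,x_n],y_1,\cdots,y_{n-2})=\sum_{i=1}^{n}(-1)^{n-i}\rho(x_i,y_1,\cdots,y_{n-2})\,\rho(x_1,\cdots,\widehat{x}_i,\cdots,x_n),
\]
in which the two operators appear in the opposite order to \eqref{RepNLie1}. To bridge this, I would write each reversed product as $\rho(x_1,\cdots,\widehat{x}_i,\cdots,x_n)\rho(x_i,y_1,\cdots,y_{n-2})$ minus the commutator $[\rho(x_1,\cdots,\widehat{x}_i,\cdots,x_n),\rho(x_i,y_1,\cdots,y_{n-2})]$, feed the first group into \eqref{RepNLie1} (which already equals $\rho([x_1,\cdots,x_n],y_1,\cdots,y_{n-2})$), and expand each commutator by \eqref{RepNLie2}. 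The term in which the inner bracket falls on $x_i$ gives $[x_1,\cdots,\widehat{x}_i,\cdots,x_n,x_i]=(-1)^{n-i}[x_1,\cdots,x_n]$ by skew-symmetry, and these contributions collect into a multiple of $\rho([x_1,\cdots,x_n],y_1,\cdots,y_{n-2})$; the leftover terms, in which the inner bracket falls on some $y_k$, reorganize --- after another appeal to skew-symmetry, to \eqref{RepNLie1}, and to the Filippov-Jacobi identity \eqref{eq:de1} of $A$ --- into precisely the discrepancy needed to complete the identity.

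The main obstacle is exactly this order-reversal. For $n=2$ it is harmless, since the reversed and original forms of \eqref{RepNLie1} coincide (both say $\rho([x_1,x_2])=[\rho(x_1),\rho(x_2)]$), but for $n\ge 3$ they genuinely differ, and reconciling them forces the simultaneous use of both representation axioms together with \eqref{eq:de1}, under delicate sign and permutation bookkeeping. I would first nail down the $n=2$ and $n=3$ cases by hand to fix every sign, and only then write out the general index computation.
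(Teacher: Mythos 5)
The paper states Lemma \ref{lem:rep} without proof, so there is nothing to compare your argument against; I can only assess it on its own terms. Your overall architecture is sound: skew-symmetry is indeed inherited, your verification of \eqref{RepNLie2} for $\rho^*$ is complete and correct (the order reversal turns the commutator into its negative on both sides), and you have correctly isolated the crux, namely that \eqref{RepNLie1} for $\rho^*$ is equivalent to the order-reversed identity
\begin{equation*}
-\rho([x_1,\cdots,x_n],y_1,\cdots,y_{n-2})=\sum_{i=1}^{n}(-1)^{n-i}\rho(x_i,y_1,\cdots,y_{n-2})\,\rho(x_1,\cdots,\widehat{x}_i,\cdots,x_n).
\end{equation*}

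The gap is in the last step. If you carry out your plan --- replace each reversed product by the direct product minus a commutator, absorb the direct products via \eqref{RepNLie1}, and expand each commutator via \eqref{RepNLie2} --- the terms where the inner bracket lands on $x_i$ contribute $n\,\rho([x_1,\cdots,x_n],Y)$, and what remains to be shown is that the leftover terms satisfy $\sum_{k}\sum_{i}(-1)^{n-i}\rho(x_i,y_1,\cdots,[x_1,\cdots,\widehat{x}_i,\cdots,x_n,y_k],\cdots,y_{n-2})=(2-n)\rho([x_1,\cdots,x_n],Y)$. This residual identity is \emph{linear} in $\rho$ and involves only single, non-nested brackets, so neither \eqref{RepNLie1} (which is quadratic in $\rho$) nor the Filippov identity \eqref{eq:de1} (which relates nested brackets) can produce it directly; moreover, every further application of the two axioms merely converts it into yet another statement equivalent to the target (for $n=3$ it is equivalent, modulo the axioms, to the anticommutator identity $\{\rho_{12},\rho_{34}\}-\{\rho_{13},\rho_{24}\}+\{\rho_{14},\rho_{23}\}=0$, where $\rho_{jk}=\rho(a_j,a_k)$), so the route you describe is circular as stated. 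The missing ingredient is the \emph{symmetrized} form of \eqref{RepNLie2}: since its left-hand side is antisymmetric under exchanging the two $(n-1)$-tuples while its right-hand side is not manifestly so, writing \eqref{RepNLie2} for $(X,Y)$ and for $(Y,X)$ and adding yields the purely linear consequence $\sum_i\rho(y_1,\cdots,[X,y_i],\cdots,y_{n-1})+\sum_i\rho(x_1,\cdots,[Y,x_i],\cdots,x_{n-1})=0$; for $n=3$, with $(X,Y)=((x_1,x_2),(x_3,x_4))$, this is exactly the required identity $\rho([x_1,x_2,x_3],x_4)-\rho([x_1,x_2,x_4],x_3)+\rho([x_1,x_3,x_4],x_2)-\rho([x_2,x_3,x_4],x_1)=0$, and for general $n$ a suitable antisymmetrization of it closes the argument. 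Without this input your leftover terms do not reorganize, so you should replace the appeal to \eqref{eq:de1} by this swapped-and-added form of \eqref{RepNLie2} and then redo the sign bookkeeping.
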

\begin{ex}
Let $A$ be a $n$-Lie algebra. Let $(A, ad)$ be the adjoint representation
of $A$ given by Eq.~\eqref{eq:adjoint}. The dual representation of the adjoint
representation $(A, ad)$ is denoted by $(A^{*}, ad^{*})$, where $A^{*}$ is the dual space of $A$, and $ad^{*} : \wedge^{n-1}A\rightarrow End(A^{*})$, for all
 $x_1,\cdots, x_{n-1}\in A$, $x\in A$ and $\xi\in A^{*}$,
\begin{equation*}
\langle ad^{*}(x_1,\cdots,x_{n-1})\xi,x\rangle=-\langle \xi,ad(x_1,\cdots,x_{n-1})x\rangle,
\end{equation*}
 and is called the coadjoint representation.
For any $n$-Lie algebra $A$, we have the semi-direct product $n$-Lie algebra $(A\oplus A^{*},[\cdot,\cdots,\cdot]^{*})$, where
$[\cdot,\cdots,\cdot]^{*}:(A\oplus A^{*})^{ n}\rightarrow A\oplus A^{*}$, for all $x_i\in A$, $\xi_i\in A^{*}$, $1\leq i\leq n$,
\begin{equation*}
[x_1+\xi_1,\cdots,x_n+\xi_n]^{*}=[x_1,\cdots,x_n]+\sum_{i=1}^{n}(-1)^{n-i}ad^{*}(x_1,\cdots,\widehat{x}_i,\cdots,x_n)(\xi_i),
\end{equation*}
where $\widehat{x}_i$ means that $x_i$ is omitted.
\end{ex}

Now we recall the definition of $n$-pre-Lie algebra and exhibit construction results in terms of $\mathcal{O}$-operators
on $n$-Lie algebras (see \cite{Ming} for more details)

\begin{defi}\cite{Liu}\label{defi:o}
Let $(A,[\cdot,\cdots,\cdot])$ be a $n$-Lie algebra and $(V,\rho)$
a representation.  A linear operator $T:V\rightarrow A$ is called
an $\mathcal O$-operator associated to $( V,\rho)$ if $T$
satisfies
\begin{equation}\label{eq:Ooperator}
 [Tu_1,\cdots,Tu_n]=T\Big(\sum_{i=1}^{n}(-1)^{n-i}\rho(Tu_1,\cdots,\widehat{Tu_i},\cdots,Tu_n)u_i\Big),\quad \forall u_i\in V,1\leq i\leq n.
\end{equation}
An $\mathcal{O}$-operator associated to
the adjoint representation $(A,ad)$ is called a Rota-Baxter operator of weight $\lambda=0$.
\end{defi}
\begin{defi}
 Let $A$ be a vector space with a multilinear map $\{\c,\cdots,\c\}:(\wedge^{n-1}A)\otimes A \rightarrow A$. The pair
$(A, \{\c, \cdots ,\c\})$ is called a $n$-pre-Lie algebra if for all  $x_1,\cdots , x_{n},y_1,\cdots,y_n\in A$, the following identities hold:
{\small\begin{eqnarray}
\{x_1,\cdots,x_{n-1},\{y_1,\cdots,y_n\}\}&=&\sum_{i=1}^{n-1}\{y_1,\cdots,y_{i-1},[x_1,\cdots,x_{n-1},y_i]^C,y_{i+1},\cdots,y_n\}\nonumber\\
&&+\{y_1,\cdots,y_{n-1},\{x_1,\cdots,x_{n-1},y_n\}\},\label{n-pre-Lie 1}\\
\{ [x_1,\cdots,x_n]^C,y_1,\cdots, y_{n-1}\}&=&\sum_{i=1}^{n}(-1)^{n-i}\{x_1,\cdots,\widehat{x}_i,\cdots,x_{n},\{ x_i,y_1,\cdots,y_{n-1}\}\},\label{n-pre-Lie 2}
\end{eqnarray}}
 where  $[\cdot,\cdots,\cdot]^C$ is defined by
\begin{equation}
[x_1,\cdots,x_n]^C=\sum_{i=1}^{n}(-1)^{n-i}\{x_1,\cdots,\widehat{x_i},\cdots,x_n,x_i\},\quad \forall  x_i\in A,1\leq i\leq n.\label{eq:ncc}
\end{equation}
\end{defi}
\begin{pro}\label{pro:comm}
Let $(A,\{\cdot,\cdots,\cdot\})$ be a $n$-pre-Lie algebra. Then the induced $n$-commutator given by Eq.~\eqref{eq:ncc} defines
a $n$-Lie algebra.
\end{pro}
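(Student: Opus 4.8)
The plan is to check the two defining properties of an $n$-Lie algebra for the induced bracket $[\cdot,\cdots,\cdot]^C$ of \eqref{eq:ncc}: total skew-symmetry, and the Filippov-Jacobi identity \eqref{eq:de1}. Skew-symmetry is purely formal and uses none of the $n$-pre-Lie axioms. Setting $B(x_1,\dots,x_n):=\{x_1,\dots,x_{n-1},x_n\}$, the map $B$ is skew-symmetric in its first $n-1$ arguments by definition of an $n$-pre-Lie product. Grouping the permutations $\sigma\in S_n$ according to the index $\sigma(n)$ landing in the distinguished slot and using skew-symmetry of $B$ in the remaining entries, one finds that the total antisymmetrization of $B$ equals $\tfrac1n[x_1,\cdots,x_n]^C$; thus $[x_1,\cdots,x_n]^C=n\,\mathrm{Alt}(B)(x_1,\dots,x_n)$ is totally skew-symmetric. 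Equivalently, one checks directly that transposing two arguments flips the sign, the only case needing an argument being a transposition involving the last slot, which follows from skew-symmetry of $\{\cdot,\cdots,\cdot\}$ in its first $n-1$ slots.

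The real content is \eqref{eq:de1}. It helps to abbreviate $L(x_1,\dots,x_{n-1})y:=\{x_1,\dots,x_{n-1},y\}$ and to observe that, once rearranged, \eqref{n-pre-Lie 1} reads $[L(X),L(Y)]=\sum_{i=1}^{n-1}L(y_1,\dots,y_{i-1},[X,y_i]^C,y_{i+1},\dots,y_{n-1})$, while \eqref{n-pre-Lie 2} reads $L([x_1,\cdots,x_n]^C,y_1,\dots,y_{n-2})=\sum_{i=1}^{n}(-1)^{n-i}L(x_1,\dots,\widehat{x_i},\dots,x_n)L(x_i,y_1,\dots,y_{n-2})$, where $X=(x_1,\dots,x_{n-1})$ and $Y=(y_1,\dots,y_{n-1})$. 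These are exactly the representation axioms \eqref{RepNLie2} and \eqref{RepNLie1} for $L$ with respect to $[\cdot,\cdots,\cdot]^C$, and they are the only tools the proof needs.

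To establish \eqref{eq:de1} I would expand both sides with \eqref{eq:ncc}. On the left, writing $w:=[y_1,\cdots,y_n]^C$, the bracket $[x_1,\cdots,x_{n-1},w]^C$ splits into the single summand $\{x_1,\dots,x_{n-1},w\}$, in which $w$ occupies the distinguished slot, and $n-1$ summands in which $w$ sits in a skew slot and some $x_k$ is distinguished. For the first, expand $w$ by \eqref{eq:ncc} and apply \eqref{n-pre-Lie 1} to each $\{x_1,\dots,x_{n-1},\{y_1,\dots,\widehat{y_j},\dots,y_n,y_j\}\}$; this produces the terms carrying a factor $[X,y_l]^C$ in a skew slot, together with terms $\{y_1,\dots,\widehat{y_j},\dots,y_n,L(X)y_j\}$. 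For the remaining $n-1$ summands, move $w$ into the first slot, with the sign dictated by skew-symmetry of $\{\cdot,\cdots,\cdot\}$, and apply \eqref{n-pre-Lie 2}. On the right I would expand each $[y_1,\cdots,y_{i-1},[X,y_i]^C,y_{i+1},\cdots,y_n]^C$ by \eqref{eq:ncc}, distinguishing whether the inserted factor $[X,y_i]^C$ lands in the distinguished slot or a skew slot, and in the former case further expanding $[X,y_i]^C$ by \eqref{eq:ncc} so that its $L(X)y_i$-part becomes visible.

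The main obstacle is organizational, together with the attendant sign bookkeeping. After this classification every monomial on both sides has the shape $\{\cdots,\{\cdots\}\}$ with a single inner bracket, and one must verify that the families produced by \eqref{n-pre-Lie 1} and \eqref{n-pre-Lie 2} on the left combine, with the correct signs $(-1)^{n-i}$, into exactly the families obtained on the right. Concretely I expect the ``$L(X)y_j$'' leftover terms to match the $L(X)y_i$-part of the distinguished-slot contributions on the right, the $[X,y_l]^C$-in-skew-slot terms to match the corresponding skew-slot contributions, and the terms supplied by \eqref{n-pre-Lie 2} to account for the rest. Tracking these signs and the shifting positions of the inserted brackets through the nested expansions is the only delicate point; once every term is classified in this way, both sides coincide and \eqref{eq:de1} follows.
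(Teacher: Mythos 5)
The paper offers no proof of Proposition \ref{pro:comm}: the result is quoted from the literature (the reference cited just before the definition of $n$-pre-Lie algebra), so there is nothing in the text to compare your argument against. On its own merits your plan is correct and is the natural direct verification. The skew-symmetry half is complete: writing $[\cdot,\cdots,\cdot]^C$ as $n$ times the total antisymmetrization of $\{\cdot,\cdots,\cdot\}$, after collapsing the $(n-1)!$ permutations with a fixed image of the last slot via skew-symmetry in the first $n-1$ arguments, is exactly right. For the Filippov--Jacobi identity \eqref{eq:de1} your classification of monomials is the correct one, and the sign matching you defer does go through: for instance the \eqref{n-pre-Lie 2}-terms on the left carry the coefficient $(-1)^{n-k}(-1)^{n-2}(-1)^{n-j}=(-1)^{n-j-k}$, while the corresponding terms on the right (an $x_m$ distinguished inside the inner bracket sitting in the distinguished outer slot) carry $(-1)^{n-i}(-1)^{n-m}(-1)^{n-2}=(-1)^{n-i-m}$ after moving $y_i$ to the front of the inner bracket, and these agree under $i=j$, $m=k$; the $L(X)y_j$-families and the $[X,y_l]^C$-in-a-skew-slot families match in the same way. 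Your observation that \eqref{n-pre-Lie 1} and \eqref{n-pre-Lie 2} are literally the identities \eqref{RepNLie2} and \eqref{RepNLie1} for $\rho=L$ relative to $[\cdot,\cdots,\cdot]^C$ is the same structural point the paper records right after Definition \ref{pro:subadj}, namely that $(A,L)$ is a representation of $A^c$. The only thing separating your text from a finished proof is that the bookkeeping in the Jacobi half is described rather than written out; there is no gap in the idea.
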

\begin{defi}\label{pro:subadj}
Let $(A,\{\cdot,\cdots,\cdot\})$ be a $n$-pre-Lie algebra. The $n$-Lie algebra $(A,[\cdot,\cdots,\cdot]^C)$
is called the sub-adjacent $n$-Lie algebra of $(A,\{\cdot,\cdots,\cdot\})$, and denoted by $A^{c}$. $(A,\{\cdot,\cdots,\cdot\})$ is called a compatible
$n$-pre-Lie algebra of the $n$-Lie algebra $A^{c}$.
\end{defi}
Let $(A,\{\cdot,\cdots,\cdot\})$ be a $n$-pre-Lie algebra. Define the left multiplication $L: \wedge^{n-1}A\rightarrow  gl(A)$
by
\begin{equation}\label{eq:R}
L(x_1,\cdots,x_{n-1})x_n=\{x_1,\cdots,x_{n-1},x_n\},\quad \forall x_i\in A,1\leq i\leq {n}.
\end{equation}
 Moreover, we define the right multiplication $R:\otimes^{n-1}A\rightarrow gl(A)$ by
\begin{equation}
    R(x_1,\cdots,x_{n-1})x_n=\{x_n,x_1,\cdots,x_{n-1}\},\quad \forall x_i\in A,1\leq i\leq {n}.
\end{equation}
If there is a $n$-pre-Lie algebra structure on its dual
space $A^{*}$, we denote the left multiplication and right multiplication by $\mathcal{L}$ and $\mathcal{R}$ respectively.\\

 By the definitions of a $n$-pre-Lie algebra and a representation of a $n$-Lie algebra, we immediately obtain :
\begin{pro}
With the above notations, $(A,L)$ is a representation of the
$n$-Lie algebra 
$(A,[\cdot,\cdots,\cdot]^C)$. On the other hand,
let $A$ be a vector space with a $n$-linear map
$\{\cdot,\cdots,\cdot\}:(\wedge^{n-1}A)\otimes A\rightarrow A$
. Then $(A,\{\cdot,\cdots,\cdot\}) $ is a $n$-pre-Lie algebra if $[\cdot,\cdots,\cdot]^C$ defined by Eq.~\eqref{eq:ncc} is a $n$-Lie algebra and the left multiplication $L$ defined by Eq.~\eqref{eq:R}
gives a representation of this $n$-Lie algebra.
\end{pro}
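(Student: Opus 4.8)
The plan is to show that the two defining conditions of a representation of a $n$-Lie algebra, namely \eqref{RepNLie1} and \eqref{RepNLie2}, become — once the left multiplication $L$ of \eqref{eq:R} is substituted for $\rho$ and both sides are evaluated on an arbitrary element of $A$ — exactly the two defining identities \eqref{n-pre-Lie 2} and \eqref{n-pre-Lie 1} of a $n$-pre-Lie algebra. Because each translation is an equivalence and not merely a one-way implication, carrying it out settles both assertions of the proposition simultaneously. Before starting I would record that no skew-symmetry has to be verified separately: the product $\{\cdot,\cdots,\cdot\}$ is by definition skew-symmetric in its first $n-1$ slots, so $L$ genuinely factors through $\wedge^{n-1}A$, which is precisely the skew-symmetry demanded of a representation.

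The first step is to unwind \eqref{RepNLie2} for $\rho=L$. Its left-hand side is the commutator $L(x_1,\cdots,x_{n-1})L(y_1,\cdots,y_{n-1})-L(y_1,\cdots,y_{n-1})L(x_1,\cdots,x_{n-1})$; applying it to an arbitrary $z\in A$ and using \eqref{eq:R} yields $\{x_1,\cdots,x_{n-1},\{y_1,\cdots,y_{n-1},z\}\}-\{y_1,\cdots,y_{n-1},\{x_1,\cdots,x_{n-1},z\}\}$, while the right-hand side becomes $\sum_{i=1}^{n-1}\{y_1,\cdots,y_{i-1},[x_1,\cdots,x_{n-1},y_i]^C,y_{i+1},\cdots,y_{n-1},z\}$. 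Renaming $z$ as $y_n$, this is exactly \eqref{n-pre-Lie 1} after transposing the term $\{y_1,\cdots,y_{n-1},\{x_1,\cdots,x_{n-1},y_n\}\}$ to the opposite side. Hence \eqref{RepNLie2} for $L$ holds if and only if \eqref{n-pre-Lie 1} does.

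The second step does the same for \eqref{RepNLie1}. With $\rho=L$, evaluating both sides on $z\in A$ turns the left-hand side into $\{[x_1,\cdots,x_n]^C,y_1,\cdots,y_{n-2},z\}$ and the right-hand side into $\sum_{i=1}^{n}(-1)^{n-i}\{x_1,\cdots,\widehat{x}_i,\cdots,x_n,\{x_i,y_1,\cdots,y_{n-2},z\}\}$, using that $L$ takes $n-1$ arguments and that the $n$-Lie bracket in \eqref{RepNLie1} is here $[\cdot,\cdots,\cdot]^C$ of \eqref{eq:ncc}. Setting $y_{n-1}:=z$ identifies this with \eqref{n-pre-Lie 2} verbatim, so \eqref{RepNLie1} for $L$ is equivalent to \eqref{n-pre-Lie 2}.

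Combining the two equivalences proves both directions at once: if $(A,\{\cdot,\cdots,\cdot\})$ is a $n$-pre-Lie algebra then \eqref{n-pre-Lie 1}--\eqref{n-pre-Lie 2} hold, hence $L$ satisfies \eqref{RepNLie1}--\eqref{RepNLie2} and is a representation of the $n$-Lie algebra $(A,[\cdot,\cdots,\cdot]^C)$ given by Proposition~\ref{pro:comm}; conversely, if $[\cdot,\cdots,\cdot]^C$ is a $n$-Lie algebra and $L$ is a representation of it, the two axioms can be read backwards to recover \eqref{n-pre-Lie 1}--\eqref{n-pre-Lie 2}. The only real work is bookkeeping: one must track the index ranges so that the slot on which each operator acts corresponds to the last entry of the $n$-pre-Lie product, and check that the signs $(-1)^{n-i}$ in \eqref{RepNLie1} agree with those in \eqref{eq:ncc}. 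There is no genuine structural obstacle here; the entire content is the observation that the two families of identities are literal transcriptions of one another.
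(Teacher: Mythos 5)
Your proposal is correct and follows exactly the route the paper intends: the paper states this proposition without proof, remarking only that it is immediate from the definitions, and your argument is precisely the detailed verification of that claim — substituting $\rho=L$ into \eqref{RepNLie1}--\eqref{RepNLie2}, evaluating on an extra argument, and recognizing the resulting identities as \eqref{n-pre-Lie 2} and \eqref{n-pre-Lie 1}. The index bookkeeping and the sign check against \eqref{eq:ncc} are handled correctly, and the observation that skew-symmetry is automatic because $L$ is defined on $\wedge^{n-1}A$ is the right way to dispose of that requirement.
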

\begin{pro}\label{pro:npreLieT}
Let $(A,[\cdot,\cdots,\cdot])$ be a $n$-Lie algebra and $(V,\rho)$ be  a representation. Suppose that the linear map $T:V\rightarrow A$ is an $\mathcal O$-operator associated
to $(V,\rho)$. Then there exists a $n$-pre-Lie algebra structure on $V$ given by
\begin{equation}
\{u_1,\cdots,u_n\}=\rho(Tu_1,\cdots,Tu_{n-1})u_n,\quad\forall ~ u_i\in V,1\leq i\leq n.
\end{equation}
In particular; If $V=A$, let $P:A\rightarrow A$ be a Rota-Baxter operator of weight zero associated to $(A,ad)$. Then the compatible $n$-pre-Lie algebra on $A$ is given by
\begin{equation}\label{eq:rott}
\{x_1,\cdots,x_n\}=[P(x_1),\cdots,P(x_{n-1}),x_n],
\end{equation}
for any $x_1,\cdots,x_n\in A$.
\end{pro}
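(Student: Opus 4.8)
The plan is to verify the two defining identities \eqref{n-pre-Lie 1} and \eqref{n-pre-Lie 2} directly, after reformulating everything in terms of the left multiplication. Writing $L(u_1,\cdots,u_{n-1})u_n:=\{u_1,\cdots,u_n\}$ as in \eqref{eq:R}, the very definition of the product gives the identity $L(u_1,\cdots,u_{n-1})=\rho(Tu_1,\cdots,Tu_{n-1})$. Since $\rho$ is skew-symmetric on $\wedge^{n-1}A$, this already shows that $\{\cdot,\cdots,\cdot\}$ is skew-symmetric in its first $n-1$ arguments, hence a well-defined map $(\wedge^{n-1}V)\otimes V\to V$. The whole proof then rests on transporting the representation identities \eqref{RepNLie1}--\eqref{RepNLie2} of $\rho$ to $L$.

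The key preliminary step is to record that $T$ is a morphism of $n$-Lie algebras from the induced commutator to $A$. Unwinding \eqref{eq:ncc} on $V$ and using $L=\rho\circ T^{\otimes(n-1)}$, one finds
\[
[u_1,\cdots,u_n]^C=\sum_{i=1}^{n}(-1)^{n-i}\rho(Tu_1,\cdots,\widehat{Tu_i},\cdots,Tu_n)u_i,
\]
which is exactly the argument of $T$ appearing in the $\mathcal{O}$-operator relation \eqref{eq:Ooperator}. Therefore
\[
T([u_1,\cdots,u_n]^C)=[Tu_1,\cdots,Tu_n],
\]
so $T$ intertwines $[\cdot,\cdots,\cdot]^C$ on $V$ with the bracket of $A$. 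This single observation is what lets every computation on $V$ be pushed forward to $A$.

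Next I would translate the two $n$-pre-Lie axioms into representation-type identities for $L$. Rewriting \eqref{n-pre-Lie 1} with $L$, the outer product and the last summand become the compositions $L(x_1,\cdots,x_{n-1})L(y_1,\cdots,y_{n-1})y_n$ and $L(y_1,\cdots,y_{n-1})L(x_1,\cdots,x_{n-1})y_n$, so \eqref{n-pre-Lie 1} is precisely the commutator identity \eqref{RepNLie2} for $L$ relative to $[\cdot,\cdots,\cdot]^C$. Likewise, rewriting \eqref{n-pre-Lie 2} turns it into
\[
L([x_1,\cdots,x_n]^C,y_1,\cdots,y_{n-2})=\sum_{i=1}^{n}(-1)^{n-i}L(x_1,\cdots,\widehat{x}_i,\cdots,x_n)L(x_i,y_1,\cdots,y_{n-2}),
\]
which is \eqref{RepNLie1} for $L$. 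Thus it suffices to establish \eqref{RepNLie1} and \eqref{RepNLie2} for $L$. For this I would substitute $L(\cdots)=\rho(T\cdots)$ on both sides and, wherever an inner bracket $[\cdots]^C$ occurs inside an $L$, use the morphism property to replace $T([\cdots]^C)$ by $[T\cdots]$; each identity then reduces to the corresponding identity \eqref{RepNLie2} (resp. \eqref{RepNLie1}) for $\rho$ evaluated at the points $Tu_i,Tv_i\in A$, which holds because $(V,\rho)$ is a representation of $(A,[\cdot,\cdots,\cdot])$. This proves $(V,\{\cdot,\cdots,\cdot\})$ is an $n$-pre-Lie algebra, and its sub-adjacent $n$-Lie structure is then automatic by Proposition~\ref{pro:comm}.

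The only genuine work is bookkeeping: one must check that the index ranges, the hat-omissions, and the alternating signs $(-1)^{n-i}$ match up exactly when passing between a pre-Lie axiom and its representation counterpart. I expect the matching of \eqref{n-pre-Lie 2} with \eqref{RepNLie1} to be the most delicate point, since there the $\mathcal{O}$-operator relation must be invoked to rewrite $T([x_1,\cdots,x_n]^C)$ inside $\rho$ while simultaneously aligning the sign conventions; no idea beyond the morphism property of $T$ is needed, however. Finally, for the particular case I would simply specialize $V=A$, $\rho=ad$, $T=P$, giving $\{x_1,\cdots,x_n\}=ad(Px_1,\cdots,Px_{n-1})x_n=[Px_1,\cdots,Px_{n-1},x_n]$, which is \eqref{eq:rott}.
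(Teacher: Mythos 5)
Your proposal is correct: the reduction of \eqref{n-pre-Lie 1} and \eqref{n-pre-Lie 2} to the representation identities \eqref{RepNLie2} and \eqref{RepNLie1} for $\rho$ at the points $Tu_i$, via the observation that the $\mathcal{O}$-operator condition \eqref{eq:Ooperator} is precisely the statement $T([u_1,\cdots,u_n]^C)=[Tu_1,\cdots,Tu_n]$, is the standard argument, and the sign and index bookkeeping works out as you indicate. The paper itself states this proposition without proof (recalling it from the cited reference), so there is nothing to compare against; your route is the expected one.
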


\begin{cor}
With the above conditions,  $(V,[\cdot,\cdots,\cdot]^C)$ is a $n$-Lie
algebra as the sub-adjacent $n$-Lie algebra of the $n$-pre-Lie
algebra given in Proposition \ref{pro:npreLieT}, and $T$ is a $n$-Lie algebra morphism from $(V,[\cdot,\cdots,\cdot]^C)$ to $(A,[\cdot,\cdots,\cdot])$. Furthermore,
$T(V)=\{Tv\;|\;v\in V\}\subset A$ is a $n$-Lie subalgebra of $A$ and there is an induced $n$-pre-Lie algebra structure $\{\cdot,\cdots,\cdot\}_{T(V)}$ on
$T(V)$ given by
\begin{equation}
\{Tu_1,\cdots,Tu_{n}\}_{T(V)}:=T\{u_1,\cdots,u_n\},\quad\;\forall u_i\in V,1\leq i\leq n.
\end{equation}
\end{cor}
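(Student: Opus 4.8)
The plan is to treat the four assertions in turn, leaning at every step on the fact that $T$ is an $\mathcal O$-operator. First, by Proposition \ref{pro:npreLieT} the formula $\{u_1,\cdots,u_n\}=\rho(Tu_1,\cdots,Tu_{n-1})u_n$ endows $V$ with an $n$-pre-Lie structure, so Proposition \ref{pro:comm} immediately yields that the induced commutator from Eq.~\eqref{eq:ncc} makes $(V,[\cdot,\cdots,\cdot]^C)$ an $n$-Lie algebra, namely its sub-adjacent algebra in the sense of Definition \ref{pro:subadj}. Unwinding Eq.~\eqref{eq:ncc} with this product gives the explicit expression
$$[u_1,\cdots,u_n]^C=\sum_{i=1}^n(-1)^{n-i}\rho(Tu_1,\cdots,\widehat{Tu_i},\cdots,Tu_n)u_i.$$

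Next, to show $T$ is a morphism I would apply $T$ to the displayed expression and recognize the result as exactly the argument appearing on the right-hand side of the $\mathcal O$-operator identity \eqref{eq:Ooperator}; this gives $T([u_1,\cdots,u_n]^C)=[Tu_1,\cdots,Tu_n]$ directly. Since $[Tu_1,\cdots,Tu_n]=T([u_1,\cdots,u_n]^C)\in T(V)$, the image $T(V)$ is closed under the bracket of $A$, hence an $n$-Lie subalgebra.

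The only delicate point is the last claim, because $T$ need not be injective, so one must verify that $\{Tu_1,\cdots,Tu_n\}_{T(V)}:=T\{u_1,\cdots,u_n\}$ is independent of the chosen preimages. By multilinearity of $\{\cdot,\cdots,\cdot\}$ and of $T$, the difference of two such values telescopes into a sum of terms $T\{v_1,\cdots,v_n\}$ in which some $v_j\in\ker T$, so it suffices to show each of these vanishes. If $j<n$ this is clear, since then $Tv_j=0$ sits among the arguments of $\rho(Tv_1,\cdots,Tv_{n-1})$ and already $\{v_1,\cdots,v_n\}=0$. If $j=n$, I would specialize Eq.~\eqref{eq:Ooperator} to the case $Tv_n=0$: the left-hand side $[Tv_1,\cdots,Tv_{n-1},0]$ vanishes, while on the right every summand with index $i<n$ carries the zero vector $Tv_n$ inside $\rho$ and so drops out, leaving precisely $T\rho(Tv_1,\cdots,Tv_{n-1})v_n=T\{v_1,\cdots,v_n\}$, which is therefore zero. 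This is the main obstacle, and it is exactly the point at which the defining equation of an $\mathcal O$-operator is indispensable.

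Finally, with well-definedness established, the defining formula says precisely that $T$ intertwines the product $\{\cdot,\cdots,\cdot\}$ on $V$ with $\{\cdot,\cdots,\cdot\}_{T(V)}$, and hence, through Eq.~\eqref{eq:ncc}, also intertwines $[\cdot,\cdots,\cdot]^C$ on $V$ with the corresponding commutator on $T(V)$. Since $(V,\{\cdot,\cdots,\cdot\})$ already satisfies the two $n$-pre-Lie axioms \eqref{n-pre-Lie 1} and \eqref{n-pre-Lie 2}, and $T$ is surjective onto $T(V)$, I would push these identities forward: applying $T$ to each axiom in $V$ and replacing every $T$ of a product or commutator by the corresponding operation in $T(V)$ turns them into the same two identities evaluated on arbitrary elements $Tu_i\in T(V)$. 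This shows that $(T(V),\{\cdot,\cdots,\cdot\}_{T(V)})$ is an $n$-pre-Lie algebra, completing the proof.
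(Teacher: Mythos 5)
Your proposal is correct, and in fact it is more careful than the paper, which states this corollary without any proof as an immediate consequence of Proposition \ref{pro:npreLieT}. The first three assertions (the sub-adjacent $n$-Lie algebra on $V$, the morphism property of $T$, and the closure of $T(V)$ under the bracket) are handled exactly as the paper intends: unwind $[\cdot,\cdots,\cdot]^C$ into $\sum_i(-1)^{n-i}\rho(Tu_1,\cdots,\widehat{Tu_i},\cdots,Tu_n)u_i$ and apply the $\mathcal O$-operator identity \eqref{eq:Ooperator}. The genuinely valuable addition in your write-up is the well-definedness of $\{\cdot,\cdots,\cdot\}_{T(V)}$ when $T$ is not injective, a point the paper silently skips. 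Your case analysis is right: for a kernel element in one of the first $n-1$ slots the product already vanishes by multilinearity of $\rho$, and for a kernel element in the last slot, specializing \eqref{eq:Ooperator} to $Tv_n=0$ kills the left-hand side and all right-hand terms except $T\rho(Tv_1,\cdots,Tv_{n-1})v_n$, forcing that term to vanish. With well-definedness in hand, transporting the two $n$-pre-Lie axioms along the surjection $T\colon V\to T(V)$ is routine. In short, your route agrees with the paper's implicit one where the paper says anything, and correctly fills the one gap it leaves open.
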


\begin{pro}\label{pro:preLieOoper}
Let $(A,[\cdot,\cdots,\cdot])$ be a $n$-Lie algebra. Then there exists a compatible $n$-pre-Lie algebra if and only if there exists an invertible $\mathcal O$-operator $T:V\rightarrow A$ associated
to a representation $(V,\rho)$. Furthermore, the compatible $n$-pre-Lie structure on $A$ is given by
\begin{equation}
\{x_1,\cdots,x_n\}_{A}=T\rho(x_1,\cdots,x_{n-1})T^{-1}(x_n),\;\forall x_i\in A,1\leq i\leq n.
\end{equation}
\end{pro}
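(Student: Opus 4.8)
The plan is to prove the two implications by producing the relevant operator explicitly and then matching the defining $\mathcal O$-operator identity \eqref{eq:Ooperator} against the compatibility requirement $[\cdot,\cdots,\cdot]^C=[\cdot,\cdots,\cdot]$ encoded in Eq.~\eqref{eq:ncc}.

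For the forward implication, suppose $(A,\{\cdot,\cdots,\cdot\})$ is a compatible $n$-pre-Lie algebra. I would take the representation $(A,L)$ of the sub-adjacent $n$-Lie algebra furnished by the preceding proposition, set $V=A$, $\rho=L$ and $T=\mathrm{id}_A$. Then $T$ is trivially invertible and identity \eqref{eq:Ooperator} collapses to $[x_1,\cdots,x_n]=\sum_{i=1}^{n}(-1)^{n-i}L(x_1,\cdots,\widehat{x}_i,\cdots,x_n)x_i$. By \eqref{eq:R} and \eqref{eq:ncc} the right-hand side is exactly $[x_1,\cdots,x_n]^C$, so compatibility makes the equation hold and exhibits $\mathrm{id}_A$ as an invertible $\mathcal O$-operator. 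With these choices the stated formula specializes to $L(x_1,\cdots,x_{n-1})x_n$, recovering the original product.

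For the converse, assume $T:V\to A$ is an invertible $\mathcal O$-operator associated to $(V,\rho)$. By Proposition \ref{pro:npreLieT}, $\{u_1,\cdots,u_n\}=\rho(Tu_1,\cdots,Tu_{n-1})u_n$ is a $n$-pre-Lie product on $V$. Since $T$ is a linear isomorphism I would transport it to $A$ via $\{x_1,\cdots,x_n\}_A:=T\{T^{-1}x_1,\cdots,T^{-1}x_n\}$; unwinding this yields precisely the claimed formula $T\rho(x_1,\cdots,x_{n-1})T^{-1}(x_n)$, and axioms \eqref{n-pre-Lie 1}--\eqref{n-pre-Lie 2} transfer automatically because $T$ is by construction an isomorphism of the two products.

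The substantive point, and the step I expect to be the crux, is checking that this transported product is compatible with the original bracket, i.e. that its sub-adjacent bracket reproduces $[\cdot,\cdots,\cdot]$. I would evaluate $[x_1,\cdots,x_n]^C$ from Eq.~\eqref{eq:ncc} using the explicit formula, substitute $x_i=Tu_i$, and factor $T$ out to get $[Tu_1,\cdots,Tu_n]^C=T\big(\sum_{i=1}^{n}(-1)^{n-i}\rho(Tu_1,\cdots,\widehat{Tu_i},\cdots,Tu_n)u_i\big)$, whose right-hand side is $[Tu_1,\cdots,Tu_n]$ by the $\mathcal O$-operator identity \eqref{eq:Ooperator}; surjectivity of $T$ then promotes this to $[\cdot,\cdots,\cdot]^C=[\cdot,\cdots,\cdot]$ on all of $A$. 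Alternatively, the corollary following Proposition \ref{pro:npreLieT} already records that $T$ is a morphism of sub-adjacent $n$-Lie algebras, which for invertible $T$ gives the same conclusion at once. Either way, the matching of the sub-adjacent bracket with the given one is governed exactly by the $\mathcal O$-operator relation, which is what drives the equivalence.
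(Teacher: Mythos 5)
Your proof is correct. The paper itself states Proposition \ref{pro:preLieOoper} in its preliminaries without giving a proof (it is recalled from the cited literature on $n$-pre-Lie algebras and $\mathcal O$-operators), and your argument --- taking $V=A$, $\rho=L$, $T=\mathrm{id}_A$ for the forward direction so that \eqref{eq:Ooperator} reduces to the compatibility condition $[\cdot,\cdots,\cdot]^C=[\cdot,\cdots,\cdot]$, and, for the converse, transporting the structure of Proposition \ref{pro:npreLieT} along the invertible $T$ and verifying compatibility by substituting $x_i=Tu_i$ into \eqref{eq:ncc} and invoking \eqref{eq:Ooperator} together with surjectivity of $T$ --- is precisely the standard argument one would supply.
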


\section{Representations of $n$-pre-Lie algebras}
In this section, we introduce the notion of a representation of a $n$-pre-Lie algebra, construct the
corresponding semi-direct product $n$-pre-Lie algebra and give the dual representation.
\begin{defi}\label{defrep}
 A  representation of a $n$-pre-Lie algebra $(A,\{\cdot,\cdots,\cdot\})$   on a vector space $V$ consists of a pair $(l,r)$, where $l:\wedge^{n-1} A \rightarrow gl(V)$ is a representation of the $n$-Lie algebra $A^c$ on $V$ and $r:(\wedge^{n-2}A)\otimes A \rightarrow gl(V)$ is a linear map such that  for all $x_1,\cdots,x_{n},y_1,\cdots,y_n\in A$, the following identities holds:
 \begin{align}
&\bullet l(x_1,\cdots,x_{n-1})r(y_1,\cdots,y_{n-1}) =r(y_1,\cdots,y_{n-1})\mu(x_1,\cdots,x_{n-1})\nonumber\\
&+\sum_{i=1}^{n-2}r(y_1,\cdots,y_{i-1},[x_1,\cdots,x_{n-1},y_i]^C,y_{i+1},\cdots,y_{n-1})+r(y_1,\cdots,y_{n-2},\{x_1,\cdots,x_{n-1},y_{n-1}\}),\\
&\bullet r([x_1,\cdots,x_n]^C,y_{1},\cdots,y_{n-2})=\sum_{i=1}^{n}(-1)^{n-i}l(x_1,\cdots,\widehat{x_i},\cdots,x_n)r(x_i,y_{1},\cdots,y_{n-2}),\\
&\bullet r(x_1,\cdots,x_{n-2},\{y_{1},\cdots,y_{n}\})=l(y_{1},\cdots,y_{n-1})r(x_1,\cdots,x_{n-2},y_{n})\nonumber\\
&+\sum_{i=1}^{n-1}(-1)^{i+1} r(y_{1},\cdots,\widehat{y_i},\cdots,y_{n})\mu(x_1,\cdots,x_{n-2},y_i),\\
&\bullet r(y_1,\cdots,y_{n-1})\mu(x_1,\cdots,x_{n-1})=l(x_1,\cdots,x_{n-1})r(y_1,\cdots,y_{n-1})\nonumber\\
&+\sum_{i=1}^{n-1}(-1)^{i}r(x_1,\cdots,\widehat{x_i},\cdots,x_{n-1},\{x_i,y_1,\cdots,y_{n-1}\}),
\end{align}
where $\quad \mu(x_1,\cdots,x_{n-1})=l(x_1,\cdots,x_{n-1})+\sum_{i=1}^{n-1}(-1)^{i}r(x_1,\cdots,\widehat{x_i},\cdots,x_{n-1},x_i)$.
\end{defi}
Let $(A,\{\cdot,\cdots,\cdot\})$ be a $n$-pre-Lie algebra and $l$ a representation of the sub-adjacent $n$-Lie algebra
$A^c$ on $V$ . Then $(l, 0)$ is a representation of the $n$-pre-Lie algebra
$(A,\{\cdot,\cdots,\cdot\})$ on the vector space $V$.
It is obvious that $(A,L,R)$ is a
representation of a $n$-pre-Lie algebra on itself, which is called the adjoint
representation.\\

By straightforward computations, we have
\begin{pro}\label{carpre}
Let $(A,\{\cdot,\cdots,\cdot\})$ be a $n$-pre-Lie algebra, $V$  a vector space and $l,r:
\otimes^{n-1}A\rightarrow  gl(V)$  two  linear
maps. Then $(V,l,r)$ is a representation of $A$ if and only if there
is a $n$-pre-Lie algebra structure $($called semi-direct product$)$
on the direct sum $A\oplus V$ of vector spaces, defined by
\begin{align}
\{x_1+u_1,\cdots,x_n+u_n\}_{A\oplus V}=&\{x_1,\cdots,x_n\}+l(x_1,\cdots,x_{n-1})(u_n)\nonumber
\\ \label{eq:sum}&+\sum_{i=1}^{n-1}(-1)^{i+1}r(x_1,\cdots,\widehat{x_i},\cdots,x_n)(u_i),
\end{align}
for $x_i\in A, u_i\in V, 1\leq i\leq n$. We denote this semi-direct product $n$-pre-Lie algebra by $A\ltimes_{l,r} V.$
\end{pro}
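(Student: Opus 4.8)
The plan is to prove both implications at once by expanding the two defining identities \eqref{n-pre-Lie 1} and \eqref{n-pre-Lie 2} for the candidate product \eqref{eq:sum} on $A\oplus V$ and comparing components. The structural observation that makes this tractable is that \eqref{eq:sum} is the sum of an $A$-valued part $\{x_1,\cdots,x_n\}$, which does not involve any of the $u_i$, and a $V$-valued part $l(x_1,\cdots,x_{n-1})(u_n)+\sum_{i=1}^{n-1}(-1)^{i+1}r(x_1,\cdots,\widehat{x_i},\cdots,x_n)(u_i)$ that is \emph{linear} in the tuple $(u_1,\cdots,u_n)$ of $V$-entries. Hence every iterated product built from \eqref{eq:sum} has an $A$-component independent of the $V$-entries and a $V$-component linear in them; no term quadratic or higher in $V$ can occur. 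Each of \eqref{n-pre-Lie 1} and \eqref{n-pre-Lie 2}, read in $A\oplus V$, is therefore equivalent to the conjunction of its $A$-component and its $V$-component.

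First I would dispatch the $A$-components: since the $A$-part of \eqref{eq:sum} is exactly $\{\cdot,\cdots,\cdot\}$, these reduce verbatim to \eqref{n-pre-Lie 1} and \eqref{n-pre-Lie 2} for $(A,\{\cdot,\cdots,\cdot\})$, which hold by hypothesis and constrain neither $l$ nor $r$. It then remains to analyze the $V$-components. Being linear in the $V$-entries, by multilinearity each such identity holds for all inputs if and only if it holds whenever exactly one $V$-entry is nonzero, and the slot occupied by that single entry dictates the output: an entry in the last slot of a product passes through $l$, whereas an entry in one of the first $n-1$ slots passes through $r$ with the sign $(-1)^{i+1}$ prescribed by \eqref{eq:sum}. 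A parallel computation using \eqref{eq:ncc} shows that the $V$-component of the induced $n$-commutator on $A\oplus V$ with a single nonzero entry $u_i$ collapses to $(-1)^{n-i}\mu(x_1,\cdots,\widehat{x_i},\cdots,x_n)(u_i)$, with $\mu$ the operator of Definition~\ref{defrep}; this identifies the sub-adjacent $n$-Lie algebra of $A\oplus V$ with the semi-direct product $A^c\ltimes_\mu V$ of Lemma~\ref{lem:sepro} and explains why $\mu$ surfaces wherever the marked entry sits inside a commutator.

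Carrying this out for \eqref{n-pre-Lie 1}, placing the nonzero entry in the last argument $y_n$ of the inner bracket makes every term go through $l$ and reproduces exactly the $n$-Lie representation identity \eqref{RepNLie2} for $l$ on $A^c$; placing it in an inner slot $y_j$ with $j\leq n-1$ yields the first identity of Definition~\ref{defrep}; and placing it in an outer argument $x_k$ produces the term $r(\cdots,\{y_1,\cdots,y_n\})$ and gives the third identity. Symmetrically, in \eqref{n-pre-Lie 2}, where the commutator $[x_1,\cdots,x_n]^C$ occupies the first slot, placing the entry in the last argument $y_{n-1}$ recovers \eqref{RepNLie1}; placing it in $y_q$ with $q\leq n-2$ gives the second identity; and placing it in an $x_p$ (hence inside the commutator, bringing in $\mu$) gives the fourth identity. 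Since each of the four identities, together with $l$ being a representation of $A^c$, is both produced by and conversely forces the vanishing of the corresponding single-entry $V$-component, the equivalence follows.

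The step I expect to be the main obstacle is the sign and position bookkeeping in the $V$-component of \eqref{n-pre-Lie 2}: there the single marked entry must be threaded simultaneously through the alternating expansion $(-1)^{n-i}$ of the commutator \eqref{eq:ncc} in the first slot and through the $(-1)^{i+1}$ signs of the surrounding products, and one must verify that the surviving $l$- and $r$-terms regroup precisely into $\mu$ rather than into some other combination. Everything outside this sign reconciliation is routine multilinear expansion.
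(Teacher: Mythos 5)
Your proposal is correct: the paper itself offers no written proof (it introduces the proposition with ``By straightforward computations, we have''), and your component-wise expansion --- using linearity in the $V$-entries to reduce to a single nonzero $V$-slot, identifying the $V$-component of the induced commutator with $(-1)^{n-i}\mu(x_1,\cdots,\widehat{x_i},\cdots,x_n)(u_i)$ exactly as in the paper's computation \eqref{brac}, and matching the six placements against \eqref{RepNLie1}, \eqref{RepNLie2} and the four identities of Definition~\ref{defrep} --- is precisely the computation the paper is alluding to. No gap; this is the same (and essentially the only) approach.
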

Let $V$ be a vector space and $(V,l,r)$ be a representation of the $n$-pre-Lie algebra $(A,\{\cdot,\cdots,\cdot\})$ on $V$. Define $\widetilde{\rho}:\wedge^{n-1}A \rightarrow gl(V)$ by
\begin{equation}
    \widetilde{\rho}(x_1,\cdots,x_{n-1})= l(x_1,\cdots,x_{n-1})+\sum_{i=1}^{n-1}(-1)^{i}r(x_1,\cdots,\widehat{x_i},\cdots,x_{n-1},x_i),
\end{equation}
for all $x_1,\cdots,x_{n-1}\in A.$
\begin{pro}\label{teald}
With the above notation, $(V,\widetilde{\rho})$ is a representation of the sub-adjacent $n$-Lie algebra $(A^c, [\cdot,\cdots, \cdot]^C)$ on the vector space $V$.
\end{pro}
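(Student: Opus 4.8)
The plan is to avoid verifying the two defining identities \eqref{RepNLie1}--\eqref{RepNLie2} of an $n$-Lie algebra representation directly, and instead to realize $\widetilde{\rho}$ as the sub-adjacent datum of the semi-direct product. The starting observation is that $\widetilde{\rho}$ is nothing but the operator $\mu$ appearing in Definition \ref{defrep}, now evaluated through the given representation on $V$. So the four identities defining $(l,r)$ may be used freely, and the whole content will be repackaged as a statement about commutators.

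First I would form the semi-direct product $n$-pre-Lie algebra $A\ltimes_{l,r}V$ supplied by Proposition \ref{carpre}; since $(V,l,r)$ is assumed to be a representation, this is a genuine $n$-pre-Lie algebra structure on $A\oplus V$. By Proposition \ref{pro:comm}, its induced $n$-commutator $[\cdot,\cdots,\cdot]^C$ is automatically an $n$-Lie bracket on $A\oplus V$, so no Filippov--Jacobi identity needs to be checked by hand.

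The heart of the argument is a componentwise computation of this commutator. Writing $X_j=x_j+u_j$ and expanding $[X_1,\cdots,X_n]^C=\sum_{i=1}^n(-1)^{n-i}\{X_1,\cdots,\widehat{X_i},\cdots,X_n,X_i\}_{A\oplus V}$ through the semi-direct product formula of Proposition \ref{carpre}, the $A$-component collapses, by \eqref{eq:ncc}, to $[x_1,\cdots,x_n]^C$, the sub-adjacent bracket of $A$ itself. For the $V$-component, the single $l$-term of each summand contributes $(-1)^{n-i}l(x_1,\cdots,\widehat{x_i},\cdots,x_n)(u_i)$, while the various $r$-terms, after re-indexing the omitted slots, reassemble precisely into the $r$-part of $\widetilde{\rho}$. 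Collecting everything, the $V$-component equals $\sum_{i=1}^n(-1)^{n-i}\widetilde{\rho}(x_1,\cdots,\widehat{x_i},\cdots,x_n)(u_i)$; that is, the sub-adjacent $n$-Lie algebra of $A\ltimes_{l,r}V$ is exactly the semi-direct product $A^c\ltimes_{\widetilde{\rho}}V$ of Lemma \ref{lem:sepro}.

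Finally, Lemma \ref{lem:sepro} asserts that $A^c\ltimes_{\widetilde{\rho}}V$ carries an $n$-Lie algebra structure of that prescribed form if and only if $(V,\widetilde{\rho})$ is a representation of $A^c$. Since the previous step exhibits this structure as the sub-adjacent algebra of a bona fide $n$-pre-Lie algebra, it does exist, and the claim follows at once. The one delicate point, and the step I expect to be the main obstacle, is the sign and index bookkeeping in the $V$-component computation: one must track how the factor $(-1)^{i+1}$ from the semi-direct product, the outer sign $(-1)^{n-i}$, and the internal signs $(-1)^{j}$ defining $\widetilde{\rho}$ combine, and check that the resulting parities agree modulo $2$ in both cases, according as the appended index lies before or after the acted-upon one. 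A direct alternative would verify \eqref{RepNLie1}--\eqref{RepNLie2} by substituting the four identities of Definition \ref{defrep}, but this is considerably more laborious, and the indirect route conveys the same content more transparently.
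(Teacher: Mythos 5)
Your proposal is correct and follows essentially the same route as the paper: form the semi-direct product $A\ltimes_{l,r}V$ via Proposition \ref{carpre}, expand its sub-adjacent commutator to identify the $V$-component with $\sum_{k}(-1)^{n-k}\widetilde{\rho}(x_1,\cdots,\widehat{x_k},\cdots,x_n)(u_k)$, and conclude by Lemma \ref{lem:sepro}. The sign bookkeeping you flag as the delicate point is precisely the content of the paper's displayed computation, and it works out as you describe.
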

\begin{proof}
By Proposition \ref{carpre}, we have the semi-direct product $n$-pre-Lie algebra $A\ltimes_{l,r} V.$ Consider its sub-adjacent $n$-Lie algebra structure
$[\cdot,\cdots, \cdot]^C$, we have for any $x_i\in A,\;u_i\in V$
\begin{align}
  &\quad \;[x_1+u_1,\cdots,x_n+u_n]_{A\oplus V}^C=\sum_{i=1}^{n}(-1)^{n-i}\{x_1+u_1,\cdots,\widehat{x_i+u_i},\cdots,x_n+u_n,x_i+u_i\}_{A\oplus V}\nonumber\\
  &=\sum_{i=1}^{n}(-1)^{n-i}\{x_1,\cdots,\widehat{x_i},\cdots,x_n,x_i\}+\sum_{i=1}^{n}(-1)^{n-i}l(x_1,\cdots,\widehat{x_i},\cdots,x_n)(u_i)\nonumber\\
  &+\sum_{i=1}^{n}(-1)^{n-i}\Big(\sum_{1 \leq i<j \leq n}(-1)^{j}r(x_1,\cdots,\widehat{x_i},\cdots,\widehat{x_j},\cdots,x_n,x_i)(u_j)\nonumber\\
  &+\sum_{1 \leq j<i \leq n}(-1)^{j+1}r(x_1,\cdots,\widehat{x_j},\cdots,\widehat{x_i},\cdots,x_n,x_i)(u_j)\Big)\nonumber\\
  &=[x_1,\cdots,x_n]^C+\sum_{i=1}^{n}(-1)^{n-i}\Big(l(x_1,\cdots,\widehat{x_i},\cdots,x_n)(u_i)+\sum_{1 \leq i<j \leq n}(-1)^{j}r(x_1,\cdots,\widehat{x_i},\cdots,\widehat{x_j},\cdots,x_n,x_i)(u_j)\nonumber\\
  &+\sum_{1 \leq j<i \leq n}(-1)^{j+1}r(x_1,\cdots,\widehat{x_j},\cdots,\widehat{x_i},\cdots,x_n,x_i)(u_j)\nonumber\\
  &=[x_1,\cdots,x_n]^C+\sum_{k=1}^{n}(-1)^{n-k}\;\widetilde{\rho}(x_1,\cdots,\widehat{x_k},\cdots,x_n)(u_k)\label{brac}.
\end{align}
By Lemma \ref{lem:sepro}, $(V,\widetilde{\rho})$ is a representation of the sub-adjacent $n$-Lie algebra $(A^c, [\cdot,\cdots, \cdot]^C)$ on the vector space $V$. The proof is finished.
\end{proof}
If $(l,r)=(L,R)$ is a representation of a $n$-pre-Lie algebra $(A,\{\cdot,\cdots,\cdot\})$, then $\widetilde{\rho}=ad$ is the adjoint representation of the sub-adjacent $n$-Lie algebra $(A^c, [\cdot,\cdots, \cdot]^C)$ on itself.
\begin{cor} Let 
$(V,l,r)$ be a representation of a $n$-pre-Lie algebra $(A,\{\cdot,\cdots,\cdot\})$ on $V$. Then the semi-product $n$-pre-Lie algebra $A\ltimes_{l,r}V$ and $A\ltimes_{\widetilde{\rho}}V$ given by the representations $(V,l,r)$ and $(V,\widetilde{\rho},0)$ respectively have the same sub-adjacent $n$-Lie algebra $A^c\ltimes_{\widetilde{\rho}}V$ given by \eqref{brac}.
\end{cor}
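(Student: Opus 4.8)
The plan is to deduce the whole statement from the bracket computation \eqref{brac} already carried out in the proof of Proposition \ref{teald}, so that essentially no new calculation is required. The corollary is really an observation that two a priori different semi-direct products induce the same $n$-Lie bracket on $A\oplus V$.

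First I would check that $(V,\widetilde{\rho},0)$ is genuinely a representation of the $n$-pre-Lie algebra $(A,\{\cdot,\cdots,\cdot\})$, so that $A\ltimes_{\widetilde{\rho}}V$ is well defined via Proposition \ref{carpre}. This is immediate: by Proposition \ref{teald} the map $\widetilde{\rho}$ is a representation of the sub-adjacent $n$-Lie algebra $A^c$, and by the remark following Definition \ref{defrep}, for any representation $l'$ of $A^c$ the pair $(l',0)$ is a representation of the $n$-pre-Lie algebra. Taking $l'=\widetilde{\rho}$ settles this.

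Next I would read off the sub-adjacent $n$-Lie bracket of $A\ltimes_{l,r}V$, which is exactly the final line \eqref{brac} of the proof of Proposition \ref{teald},
\begin{equation*}
[x_1+u_1,\cdots,x_n+u_n]^C=[x_1,\cdots,x_n]^C+\sum_{k=1}^{n}(-1)^{n-k}\,\widetilde{\rho}(x_1,\cdots,\widehat{x_k},\cdots,x_n)(u_k),
\end{equation*}
and this is, by Lemma \ref{lem:sepro}, precisely the bracket of the semi-direct product $n$-Lie algebra $A^c\ltimes_{\widetilde{\rho}}V$. I would then apply the very same formula \eqref{brac} to the representation $(V,\widetilde{\rho},0)$ in place of $(V,l,r)$. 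The one point to verify is that the operator attached to a pair $(l,r)$ by the $\widetilde{\rho}$-construction collapses, when $r=0$, to its first component; applied to $(\widetilde{\rho},0)$ the correction term $\sum_{i=1}^{n-1}(-1)^i r(\cdots)$ vanishes and the construction returns $\widetilde{\rho}$ itself. Hence \eqref{brac} produces the identical bracket for $A\ltimes_{\widetilde{\rho}}V$, and both sub-adjacent $n$-Lie algebras equal $A^c\ltimes_{\widetilde{\rho}}V$.

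There is no real obstacle here; the only step worth isolating is this self-consistency remark that the $\widetilde{\rho}$-construction is idempotent on pairs of the form $(\cdot,0)$, which is exactly what forces the two semi-direct products to share the same sub-adjacent $n$-Lie algebra. The remaining content is bookkeeping already discharged in \eqref{brac}.
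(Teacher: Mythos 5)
Your proposal is correct and follows exactly the route the paper intends: the corollary is stated without a separate proof precisely because it is read off from the computation \eqref{brac} in the proof of Proposition \ref{teald}, together with the observation that the $\widetilde{\rho}$-construction applied to a pair $(\widetilde{\rho},0)$ returns $\widetilde{\rho}$ itself. Your explicit verification that $(V,\widetilde{\rho},0)$ is a well-defined $n$-pre-Lie representation and that the correction term vanishes when $r=0$ is exactly the bookkeeping the paper leaves implicit.
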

\begin{pro}
Let $(V,l,r)$ be a representation of a $n$-pre-Lie algebra $(A,\{\cdot,\cdots,\cdot\})$ on $V$. Then
$(V^{*},\widetilde{\rho}^{*},-r^{*})$ is a representation of the $n$-pre-Lie algebra $(A,\{\cdot,\cdots,\cdot\})$ on $V^{*}$, which is called the dual representation of the representation $(V,l,r)$.
\end{pro}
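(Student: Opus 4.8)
The plan is to verify directly that the pair $(\widetilde{\rho}^*,-r^*)$ meets the requirements of Definition \ref{defrep}: that the first entry represents the sub-adjacent $n$-Lie algebra $A^c$ on $V^*$, and that the four bulleted identities hold for $l':=\widetilde{\rho}^*$ and $r':=-r^*$. The representation requirement on $l'$ is essentially free. By Proposition \ref{teald}, $\widetilde{\rho}$ is a representation of $A^c$ on $V$, so by Lemma \ref{lem:rep} its dual $\widetilde{\rho}^*$ is a representation of $A^c$ on $V^*$. Hence the whole problem reduces to checking the four bullet identities.

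Before grinding, I would record one structural simplification that organizes everything. The map $T\mapsto T^*$ determined by $\langle T^*\xi,v\rangle=-\langle\xi,Tv\rangle$ is linear, and $\mu$ and $\widetilde{\rho}$ are given by the \emph{same} formula $l+\sum_{i=1}^{n-1}(-1)^i r(x_1,\cdots,\widehat{x_i},\cdots,x_{n-1},x_i)$. Therefore $\widetilde{\rho}^*=l^*+\sum_{i=1}^{n-1}(-1)^i r^*(x_1,\cdots,\widehat{x_i},\cdots,x_{n-1},x_i)$, and computing the operator $\mu'$ attached to $(l',r')$ gives
\[
\mu'(x_1,\cdots,x_{n-1})=\widetilde{\rho}^*(x_1,\cdots,x_{n-1})-\sum_{i=1}^{n-1}(-1)^i r^*(x_1,\cdots,\widehat{x_i},\cdots,x_{n-1},x_i)=l^*(x_1,\cdots,x_{n-1}).
\]
This identity $\mu'=l^*$ is what makes the verification feasible, since it replaces the composite operator $\mu'$ appearing in three of the four bullets by the single dual $l^*$, which dualizes back to $l$ cleanly.

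With this in hand I would verify each identity by evaluating both sides on an arbitrary $\xi\in V^*$, pairing against an arbitrary $v\in V$, and unwinding every composite one factor at a time through the defining relations $\langle\widetilde{\rho}^*(X)\xi,v\rangle=-\langle\xi,\widetilde{\rho}(X)v\rangle$, $\langle r'(Y)\xi,v\rangle=\langle\xi,r(Y)v\rangle$ and $\langle\mu'(X)\xi,v\rangle=-\langle\xi,l(X)v\rangle$. Each dual identity thereby becomes an operator identity on $V$ in terms of $l$, $r$ and $\widetilde{\rho}$, which I would then derive from the four original identities of Definition \ref{defrep}, expanding $\widetilde{\rho}=l+\sum_{i=1}^{n-1}(-1)^i r(\cdots,x_i)$ wherever it occurs. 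The main obstacle is exactly this matching: because $l'=\widetilde{\rho}^*$ couples $l$ and $r$, the transferred identities do not correspond one-to-one to the original bullets but to combinations of them, so the core of the argument is careful bookkeeping of signs and of the omitted-index patterns $\widehat{x_i}$. I expect to treat the bullet involving $r'([x_1,\cdots,x_n]^C,\cdots)$ first, closing it with the relations between $l(X)r(Y)$, $r(Y)\widetilde{\rho}(X)$ and the $r(\cdots,\{\cdots\})$-terms furnished by the first, third and fourth bullets, after which the remaining identities follow by the same mechanism with heavier indexing.
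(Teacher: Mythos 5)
Your proposal follows essentially the same route as the paper: the paper's own proof consists precisely of your first step (invoking Proposition \ref{teald} and Lemma \ref{lem:rep} to get that $\widetilde{\rho}^{*}$ represents $A^c$ on $V^{*}$) and then simply declares the four remaining bullet identities "straightforward" and leaves them to the reader. Your additional observation that $\mu'=l^{*}$, since $\mu$ and $\widetilde{\rho}$ are given by the identical formula and dualization is linear, is correct and is in fact the key simplification that justifies the paper's omitted verification, so your outline is, if anything, more informative than the printed proof.
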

\begin{proof}
By Proposition \ref{teald}, $(V,\widetilde{\rho})$ is a representation of the sub-adjacent $n$-Lie algebra $(A^c, [\cdot,\cdots, \cdot]^C)$ on $V$. By Lemma \ref{lem:rep}, $(V^{*},\widetilde{\rho}^{*})$ is a representation of the sub-adjacent $n$-Lie algebra $(A^c, [\cdot,\cdots, \cdot]^C)$ on the dual vector space $V^{*}$. It is straightforward to deduce that other conditions in Definition \ref{defrep} also holds. We leave details to readers.
\end{proof}
\begin{cor} Let 
$(V,l,r)$ be a representation of $(A,\{\cdot,\cdots,\cdot\})$ on $V$. Then the semi-product $n$-pre-Lie algebra $A\ltimes_{l^{*},0}V^{*}$ and $A\ltimes_{\widetilde{\rho}^{*},-r^{*}}V^{*}$ given by the representations $(V^{*},l^{*},0)$ and $(V^{*},\widetilde{\rho}^{*},-r^{*})$ respectively have the same sub-adjacent $n$-Lie algebra $A^c\ltimes_{l^{*}}V^{*}$, which is the representation of the $n$-Lie algebra $(A^c, [\cdot,\cdots, \cdot]^C)$ and its representation $(V^{*},l^{*})$.
\end{cor}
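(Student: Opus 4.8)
The plan is to deduce everything from Proposition~\ref{teald}, applied to each of the two representations separately, so that the statement collapses to a single sign cancellation.

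First I would record that both triples are genuine representations of the $n$-pre-Lie algebra $(A,\{\cdot,\cdots,\cdot\})$, so that the two semi-direct products are well-defined $n$-pre-Lie algebras in the first place. For $(V^{*},l^{*},0)$ this follows from the fact that $l^{*}$ is a representation of the sub-adjacent $n$-Lie algebra $A^c$ (Lemma~\ref{lem:rep}) together with the observation, recorded just after Definition~\ref{defrep}, that $(l',0)$ is a representation of $A$ whenever $l'$ is a representation of $A^c$. For $(V^{*},\widetilde{\rho}^{*},-r^{*})$ it is exactly the content of the preceding proposition.

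Next I would invoke Proposition~\ref{teald} in the form supplied by its proof: for any representation $(W,l',r')$ of $A$, the sub-adjacent $n$-Lie algebra of the semi-direct product $A\ltimes_{l',r'}W$ is, by Eq.~\eqref{brac}, the semi-direct product $n$-Lie algebra $A^c\ltimes_{\widetilde{(l',r')}}W$ associated via Lemma~\ref{lem:sepro} to $A^c$ and its representation $\widetilde{(l',r')}$, where
\[
\widetilde{(l',r')}(x_1,\cdots,x_{n-1})=l'(x_1,\cdots,x_{n-1})+\sum_{i=1}^{n-1}(-1)^{i}\,r'(x_1,\cdots,\widehat{x_i},\cdots,x_{n-1},x_i).
\]
Thus proving the corollary amounts to checking that both $\widetilde{(l^{*},0)}$ and $\widetilde{(\widetilde{\rho}^{*},-r^{*})}$ equal $l^{*}$. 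The first identity is immediate, since with $r'=0$ the correction sum vanishes. For the second I would use that the assignment $S\mapsto S^{*}$ determined by $\langle S^{*}\xi,v\rangle=-\langle\xi,Sv\rangle$ is $\mathbb{K}$-linear, so that dualizing the defining formula $\widetilde{\rho}=l+\sum_{i=1}^{n-1}(-1)^{i}r(x_1,\cdots,\widehat{x_i},\cdots,x_{n-1},x_i)$ term by term yields $\widetilde{\rho}^{*}=l^{*}+\sum_{i=1}^{n-1}(-1)^{i}r^{*}(x_1,\cdots,\widehat{x_i},\cdots,x_{n-1},x_i)$. Substituting $l'=\widetilde{\rho}^{*}$ and $r'=-r^{*}$ into the displayed formula, the two $r^{*}$-sums occur with opposite signs and cancel, leaving $\widetilde{(\widetilde{\rho}^{*},-r^{*})}=l^{*}$. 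Hence both semi-direct products share the sub-adjacent $n$-Lie algebra $A^c\ltimes_{l^{*}}V^{*}$, which is precisely the one attached to $A^c$ and its representation $(V^{*},l^{*})$.

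There is no genuine obstacle here; the only point requiring care is the bookkeeping of signs, namely verifying that dualization commutes with the linear combination defining $\widetilde{\rho}$ so that the $r^{*}$ contributions cancel exactly. Everything else is a direct citation of Proposition~\ref{teald} and Eq.~\eqref{brac}.
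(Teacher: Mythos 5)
Your argument is correct and is essentially the intended one: the paper states this corollary without proof, as an immediate consequence of Proposition~\ref{teald} (via Eq.~\eqref{brac}), and your verification that $\widetilde{(l^{*},0)}=l^{*}$ and that the two $r^{*}$-sums cancel in $\widetilde{(\widetilde{\rho}^{*},-r^{*})}$ — using linearity of $S\mapsto S^{*}$ to get $\widetilde{\rho}^{*}=l^{*}+\sum_{i=1}^{n-1}(-1)^{i}r^{*}(x_1,\cdots,\widehat{x_i},\cdots,x_{n-1},x_i)$ — is exactly the computation being left to the reader. The preliminary check that both pairs are genuine representations of the $n$-pre-Lie algebra is also correctly sourced from the remark after Definition~\ref{defrep} and the preceding proposition.
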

\begin{defi}
Let $(A,\{\cdot,\cdots,\cdot\})$ be a $n$-pre-Lie algebra and $(V,l,r)$ be a representation. A linear operator $T:V \rightarrow A$ is called an $\mathcal O$-operator associated to $(V,l,r)$ if $T$ satisfies 
\begin{equation}\label{O-op n-pre-Lie}
    \{Tu_1,\cdots,Tu_n\}=T\Big(l(Tu_1,\cdots,Tu_{n-1})(u_n)+\sum_{i=1}^{n-1}(-1)^{i+1}r(Tu_1,\cdots,\widehat{Tu_i},\cdots,Tu_n)(u_i)\Big),
\end{equation}
$\forall u_i\in V,\;1\leq i\leq n$. If $(V,l,r)=(A,L,R)$, then $T$ is called a Rota-Baxter operator on $A$ of weight zero denoted by $P$.
\end{defi}
\begin{pro}\label{commuting rota-baxter op}
Let $(P_1,P_2)$ be a pair of commuting Rota-Baxter operators (of weight zero) on a $n$-Lie algebra $(A,[\cdot,\cdots,\cdot])$. Then $P_2$ is a Rota-Baxter operator (of weight zero) on the associated $n$-pre-Lie algebra defined by 
\begin{equation*}
    \{x_1,\cdots,x_n\}=[P_1(x_1),\cdots,P_1(x_{n-1}),x_n],\quad \forall x_i\in A,\;1\leq i\leq n.
\end{equation*}
\end{pro}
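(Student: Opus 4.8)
The plan is to verify directly the defining identity~\eqref{O-op n-pre-Lie} of an $\mathcal O$-operator for $T=P_2$ relative to the adjoint representation $(A,L,R)$ of the $n$-pre-Lie algebra $(A,\{\cdot,\cdots,\cdot\})$. First I would unwind every structure map in terms of $P_1$ and the original bracket: by~\eqref{eq:R} and the definition of $R$ one has $L(x_1,\cdots,x_{n-1})x_n=[P_1x_1,\cdots,P_1x_{n-1},x_n]$ and $R(x_1,\cdots,x_{n-1})x_n=\{x_n,x_1,\cdots,x_{n-1}\}=[P_1x_n,P_1x_1,\cdots,P_1x_{n-2},x_{n-1}]$. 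In particular the left-hand side of~\eqref{O-op n-pre-Lie} is $\{P_2u_1,\cdots,P_2u_n\}=[P_1P_2u_1,\cdots,P_1P_2u_{n-1},P_2u_n]$, which by the commutativity hypothesis $P_1P_2=P_2P_1$ equals $[P_2P_1u_1,\cdots,P_2P_1u_{n-1},P_2u_n]$.

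The key step is to read this last expression as a single instance of the Rota-Baxter identity for $P_2$ on the $n$-Lie algebra $(A,[\cdot,\cdots,\cdot])$. Since $P_2$ is an $\mathcal O$-operator for $(A,ad)$, Eq.~\eqref{eq:Ooperator} with $\rho=ad$ reads
\begin{equation*}
[P_2w_1,\cdots,P_2w_n]=P_2\Big(\sum_{i=1}^{n}(-1)^{n-i}[P_2w_1,\cdots,\widehat{P_2w_i},\cdots,P_2w_n,w_i]\Big),\qquad w_j\in A.
\end{equation*}
I would apply this with $w_i=P_1u_i$ for $1\le i\le n-1$ and $w_n=u_n$, so that $P_2w_i=P_1P_2u_i$ for $i\le n-1$ and $P_2w_n=P_2u_n$. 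This writes $\{P_2u_1,\cdots,P_2u_n\}$ as $P_2$ applied to a sum of $n$ terms, and it then remains to match these terms against the right-hand side of~\eqref{O-op n-pre-Lie}.

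The term $i=n$ is $[P_1P_2u_1,\cdots,P_1P_2u_{n-1},u_n]=L(P_2u_1,\cdots,P_2u_{n-1})(u_n)$ exactly. For $1\le i\le n-1$ the $i$-th term is, up to sign, the bracket $[P_1P_2u_1,\cdots,\widehat{P_1P_2u_i},\cdots,P_1P_2u_{n-1},P_2u_n,P_1u_i]$; moving the last entry $P_1u_i$ to the front past the remaining $n-1$ entries costs a factor $(-1)^{n-1}$ and yields $[P_1u_i,P_1P_2u_1,\cdots,\widehat{P_1P_2u_i},\cdots,P_1P_2u_{n-1},P_2u_n]=R(P_2u_1,\cdots,\widehat{P_2u_i},\cdots,P_2u_n)(u_i)$. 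The only point requiring care is the sign bookkeeping: the coefficient $(-1)^{n-i}$ from the Rota-Baxter identity combines with the reshuffling sign $(-1)^{n-1}$ to give $(-1)^{2n-i-1}=(-1)^{i+1}$, which is precisely the coefficient attached to the $R$-terms in~\eqref{O-op n-pre-Lie}. Gathering the $L$-term together with the $(-1)^{i+1}R$-terms inside $P_2(\cdots)$ reproduces the right-hand side of~\eqref{O-op n-pre-Lie} verbatim, which finishes the proof. It is worth noting that the Rota-Baxter property of $P_1$ is used only to guarantee that $\{\cdot,\cdots,\cdot\}$ is an $n$-pre-Lie product; the identity for $P_2$ itself follows from the commutativity $P_1P_2=P_2P_1$ together with the Rota-Baxter identity for $P_2$ alone.
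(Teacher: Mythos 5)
Your proposal is correct and follows essentially the same route as the paper's proof: expand $\{P_2u_1,\cdots,P_2u_n\}$ via the definition, commute $P_1$ past $P_2$, apply the Rota--Baxter identity for $P_2$ on the underlying $n$-Lie algebra with arguments $P_1u_1,\cdots,P_1u_{n-1},u_n$, and regroup the resulting terms with the sign $(-1)^{n-i}(-1)^{n-1}=(-1)^{i+1}$ into the $L$- and $R$-terms of \eqref{O-op n-pre-Lie}. Your sign bookkeeping and the final identification with the adjoint representation $(A,L,R)$ are in fact spelled out more carefully than in the paper, whose displayed intermediate sum accidentally omits the entry $P_2(x_n)$ from the brackets.
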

\begin{proof}
For any $x_i\in A,\;1\leq i\leq n$, we have
\begin{align*}
     &\quad \;\{P_2(x_1),\cdots,P_2(x_n)\}=[P_1(P_2(x_1)),\cdots,P_1(P_2(x_{n-1})),P_2(x_n)]\\
     &=[P_2(P_1(x_1)),\cdots,P_2(P_1(x_{n-1})),P_2(x_n)]\\
     &=P_2\Big([P_2(P_1(x_1)),\cdots,P_2(P_1(x_{n-1})),x_n]\\
     &+\sum_{i=1}^{n-1}(-1)^{n-i}[P_2(P_1(x_1)),\cdots,\widehat{P_2(P_1(x_i))},\cdots,P_2(P_1(x_{n-1})),P_1(x_i)]\Big)\\
     &=P_2\Big(\{P_2(x_1),\cdots,P_2(x_{n-1}),x_n\}\\
      &+\sum_{i=1}^{n-1}(-1)^{i+1}\{x_i,P_2(x_1),\cdots,\widehat{P_2(x_i)},\cdots,P_2(x_{n-1}),P_2(x_n)\}\Big).
\end{align*}
Hence $P_2$ is a Rota-Baxter operator (of weight zero) on the $n$-pre-Lie algebra $(A, \{\cdot, \cdots, \cdot\})$.
\end{proof}
If $(l,r)=(L,R)$ is the representation of a $n$-pre-Lie algebra $(A,\{\cdot,\cdots,\cdot\})$, then $(\widetilde{\rho}^{*},-r^{*})=(ad^{*},-R^{*})$ and the corresponding semi-direct product $n$-Lie algebra is $A^c\ltimes_{L^{*}}A^{*}$, which is the key object when we construct phase space of $n$-Lie algebras in the next section.
\section{Constructions on $n$-pre-Lie algebras}

In this section, we introduce the notion of  phase space of a $n$-Lie algebra and show that a $n$-Lie algebra has a phase space
if and only if it is sub-adjacent to a $n$-pre-Lie algebra. Moreover,
we introduce the notion of  Manin triple of $n$-pre-Lie algebras and show that there is a one-to-one
correspondence between Manin triples of $n$-pre-Lie algebras and perfect phase spaces of $n$-Lie algebras. Finally, we give a procedure to construct a $(n+1)$-pre-Lie algebra induced by a $n$-pre-Lie algebra using a trace map.
\subsection{Symplectic structures and phase spaces  of $n$-Lie algebras}
\begin{defi}\cite{Ming}
An element  $B\in \wedge^{2}A^{*}$ is a symplectic structure on a  $n$-Lie algebra $(A,[\cdot,\cdots,\cdot])$ if $B$ is a nondegenerate skew-symmetric bilinear form 
satisfying the following equality 
\begin{equation}\label{eq:symp}
B([x_1,\cdots,x_n],y)=-\sum_{i=1}^{n}(-1)^{n-i}B(x_i,[x_1,\cdots,\widehat{x_i},\cdots,x_n,y]),
\end{equation}
for any $x_i,y\in A$ and $1\leq i\leq {n}$.
\end{defi}

\begin{defi}\cite{Rui}
Let $(A, [\cdot,\cdots, \cdots])$ be a $n$-Lie algebra and let $B : A \times A \rightarrow \mathbb{K}$ be a
non-degenerate symmetric bilinear form on $A$. If $B$ satisfies
\begin{equation}
    B([x_1,\cdots,x_{n-1},x_n],x_{n+1})=-B([x_1,\cdots,x_{n-1},x_{n+1}],x_n),\;\forall x_i\in A,\;1\leq i\leq {n+1}.
\end{equation}
Then $B$ is called an invariant scalar product on $A$ or a metric on $A$, and $(A, [\cdot, \cdots, \cdot],B)$ is a
metric $n$-Lie algebra.
\end{defi}
If there exists a metric $B$ and a symplectic structure $\omega$ on a $n$-Lie algebra $(A, [\cdot, \cdots,\cdot])$, then\\ $(A, [\cdot, \cdots,\cdot],B,\omega)$ is called a metric symplectic $n$-Lie algebra.\\

Let $(A, [\cdot, \cdots,\cdot],B)$ be a metric $n$-Lie algebra, we set 
\begin{equation*}
    Der_{B}(A)=\{D\in Der(A)\;|\;B(Dx,y)+B(x,Dy)=0,\;\forall x,y\in A\}.
\end{equation*}
\begin{thm}
Let $(A,B)$ be a metric $n$-Lie algebra. Then there exists a symplectic structure
on $A$ if and only if there exists a skew-symmetric invertible derivation $D\in Der_{B}(A)$.
\end{thm}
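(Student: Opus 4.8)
The plan is to exploit the non-degeneracy of the metric $B$ to set up a bijective dictionary between bilinear forms on $A$ and linear endomorphisms of $A$, and then translate each defining property of a symplectic structure into a corresponding property of the associated endomorphism. Concretely, since $B$ is non-degenerate, every bilinear form $\omega$ on $A$ is of the form $\omega(x,y)=B(Dx,y)$ for a unique linear map $D:A\to A$, and conversely each $D$ produces such an $\omega$. I would run the argument through this correspondence, establishing the ``if and only if'' in both directions at once.

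First I would record the two elementary equivalences. Using the symmetry of $B$, skew-symmetry of $\omega$ is equivalent to $B(Dx,y)=-B(x,Dy)$ for all $x,y\in A$, which is exactly the defining relation of $Der_{B}(A)$; and non-degeneracy of $\omega$ is equivalent to invertibility of $D$, again by non-degeneracy of $B$. Hence $\omega$ is a non-degenerate skew-symmetric form precisely when $D$ is an invertible $B$-skew-symmetric endomorphism. It remains only to match the symplectic identity \eqref{eq:symp} with the derivation property of $D$.

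The heart of the argument is this last matching. Substituting $\omega(a,b)=B(Da,b)$, the left-hand side of \eqref{eq:symp} becomes $B(D[x_1,\cdots,x_n],y)$. For each summand on the right I would rewrite $B(Dx_i,[x_1,\cdots,\widehat{x_i},\cdots,x_n,y])$ by invoking the metric invariance to transfer the adjoint operator $ad(x_1,\cdots,\widehat{x_i},\cdots,x_n)$ from $y$ onto $Dx_i$, obtaining $-B([x_1,\cdots,\widehat{x_i},\cdots,x_n,Dx_i],y)$; then I would reorder the bracket to move $Dx_i$ back into slot $i$, which by skew-symmetry of the bracket costs a sign $(-1)^{n-i}$. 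These two signs $(-1)^{n-i}$ annihilate the alternating sign already present in \eqref{eq:symp}, so the entire right-hand side collapses to $\sum_{i=1}^{n}B([x_1,\cdots,Dx_i,\cdots,x_n],y)$. Comparing the two sides and using non-degeneracy of $B$, identity \eqref{eq:symp} holds for all $y$ if and only if $D[x_1,\cdots,x_n]=\sum_{i=1}^{n}[x_1,\cdots,Dx_i,\cdots,x_n]$, that is, iff $D$ is a derivation.

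Assembling the three equivalences yields the theorem: a non-degenerate skew-symmetric $\omega$ satisfying \eqref{eq:symp} exists if and only if there is an invertible $D\in Der_{B}(A)$, the two being linked by $\omega(\cdot,\cdot)=B(D\cdot,\cdot)$. I expect the only delicate point to be the sign bookkeeping in the third step — correctly tracking the $(-1)^{n-i}$ factor coming from the metric invariance together with the one from reordering the $n$-bracket — while everything else follows formally from the symmetry and non-degeneracy of $B$.
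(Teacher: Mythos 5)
Your proposal is correct and follows essentially the same route as the paper: both define $D$ by $\omega(x,y)=B(Dx,y)$, identify non-degeneracy of $\omega$ with invertibility of $D$ and skew-symmetry of $\omega$ with $D\in Der_B(A)$, and convert the symplectic identity \eqref{eq:symp} into the derivation property via the metric invariance plus the $(-1)^{n-i}$ reordering sign. The sign bookkeeping you flag as the delicate point does work out exactly as you describe, matching the paper's computation.
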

\begin{proof}
Let $(A,B,\omega)$ be a metric symplectic $n$-Lie algebra. Defines $D : A \rightarrow A$ by
\begin{equation}\label{eq:metr}
    B(Dx,y)=\omega(x,y),\;\forall x,y\in A.
\end{equation}
It is clear that $D$ is invertible. Next we will check that $D$ is a skew-symmetric invertible
derivation of $(A, [\cdot, \cdots, \cdot],B)$. From Eq.\eqref{eq:symp}, for $x_i,y\in A,\;1\leq i\leq {n}$ 
\begin{align*}
 &\quad \;\sum_{i=1}^{n}B([x_1,\cdots,x_{i-1},D(x_i),x_{i+1}\cdots,x_n],y)-B(D([x_1,\cdots,x_n]),y)\\
 &=-\sum_{i=1}^{n}(-1)^{n-i}B([x_1,\cdots,\widehat{x}_i,\cdots,x_n,y],Dx_i)-B(D([x_1,\cdots,x_n]),y)\\
 &=-\sum_{i=1}^{n}(-1)^{n-i}\omega(x_i,[x_1,\cdots,\widehat{x}_i,\cdots,x_n,y])-\omega([x_1,\cdots,x_n],y)=0,
\end{align*}
and by the definition of $D$, we have 
\begin{equation*}
   B(Dx,y)+B(x,Dy)=\omega(x,y)+\omega(y,x)=0, 
\end{equation*}
that is, $D \in Der_{B}(A)$. Conversely, if $D \in Der_{B}(A)$ is invertible. Defines $\omega : A \times A \rightarrow \mathbb{K}$ by Eq.\eqref{eq:metr}. Then by the
above discussion, $\omega$ is non-degenerate, and satisfies Eq.\eqref{eq:symp}. The result follows.
\end{proof}
\begin{ex}
Let $(A,[\cdot,\cdots,\cdot])$ be a $n$-Lie algebra and $(m>2)$ a positive integer, we can construct a metric symplectic $n$-Lie algebra.
Let $N$ be the set of all non-negative integers
\begin{equation*}
  \mathbb{K}[t]=\{f(t)=\sum_{i=0}^{j}a_it^{i}\;|
  \;a_i\in \mathbb{K},\;j\in N\},
\end{equation*}
be the algebra of polynomials over $\mathbb{K}$.
We consider the tensor product of vector spaces 
\begin{equation}
    A_m=A\otimes (t\mathbb{K}[t]\diagup t^{m}\mathbb{K}[t]),
\end{equation}
where $t\mathbb{K}[t]\diagup t^{m}\mathbb{K}[t]$
is the quotient space of $t\mathbb{K}[t]$ module $t^{m}\mathbb{K}[t]$. Then $A_m$ is a nilpotent $n$-Lie algebra with the following bracket 
\begin{equation}
    [x_1\otimes t^{\overline{p_1}},\cdots,x_n\otimes t^{\overline{p_n}}]=[x_1,\cdots,x_n]_A\otimes t^{\overline{p_1+\cdots+p_n}},\;x_1,\cdots,x_n\in A,\;p_1,\cdots,p_n\in N\setminus \{0\}.
\end{equation}
Defines an endomorphism $D$ of $A_m$ by
\begin{equation*}
    D(x\otimes t^{\overline{p}})=p(x\otimes t^{\overline{p}}),\;\forall x\in A,\;p=1,\cdots,m-1.
\end{equation*}
Then $D$ is an invertible derivation of the $n$-Lie algebra $A_m$.
Let $\widetilde{A}_m=A_m\oplus A^{*}_m$, where 
$A^{*}_m$ is the dual space of $A_m$.
Defines the bracket
\begin{equation}
    [x_1+\xi_1,\cdots,x_n+\xi_n]_{A_m\oplus A^{*}_m}=[x_1,\cdots,x_n]_{A_m}+\sum_{k=1}^{n}(-1)^{n-k}ad^{*}(x_1,\cdots,\widehat{x_k},\cdots,x_n)(\xi_k),
\end{equation}
 and the bilinear form 
 \begin{equation}
     B(x_1+\xi_1,x_2+\xi_2)=\xi_1(x_2)+\xi_2(x_1),
 \end{equation}
 for $x_1,\cdots,x_n\in A_m,\;\xi_1,\cdots,\xi_n\in A^{*}_m$.
Then $(\widetilde{A}_m,B)$ is a metric $n$-Lie algebra. Define a  linear map $\widetilde{D}:\widetilde{A}_m \rightarrow \widetilde{A}_m$ by
\begin{equation}
 \widetilde{D}(x+\xi)=D(x)+D^{*}(\xi),\;\forall x\in A_m,\;\xi \in A^{*}_m,  
\end{equation}
where $D^{*}(\xi)=-\xi D$. Then $\widetilde{D}$ 
is an invertible derivation, and by a direct computation, we have $\widetilde{D}\in Der_{B}(A^{*}_m)$. Hence the metric $n$-Lie algebra $(A^{*}_m,B)$ admits a symplectic structure $\omega$ as follows 
\begin{equation}
    \omega(x_1+\xi_1,x_2+\xi_2)=B\Big(\widetilde{D}(x_1+\xi_1),x_2+\xi_2\Big)=-\xi_1(D(x_2))+\xi_2(D(x_1)).
\end{equation}
\end{ex}
\begin{pro}\label{pro:sympre}\cite{Ming}
Let $(A,[\cdot,\cdots,\cdot],B)$ be a symplectic $n$-Lie algebra. Then there exists a compatible $n$-pre-Lie algebra structure on $A$ given by
\begin{equation}\label{eq:sympre}
B(\{x_1,\cdots,x_n\},y)=-B(x_n,[x_1,\cdots,x_{n-1},y]),
\end{equation}
for any $x_1,\cdots,x_{n+1}\in A$.
\end{pro}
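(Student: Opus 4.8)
The plan is to realize the symplectic form as an invertible $\mathcal{O}$-operator and then invoke Proposition~\ref{pro:preLieOoper}, rather than verifying the two $n$-pre-Lie identities \eqref{n-pre-Lie 1}--\eqref{n-pre-Lie 2} by hand. First I would use the non-degeneracy of $B$ to define the linear isomorphism $B^\flat:A\to A^{*}$ by $\langle B^\flat(x),y\rangle=B(x,y)$, and set $T:=(B^\flat)^{-1}:A^{*}\to A$. Since $B^\flat$ is bijective, $T$ is an invertible linear map, and the defining equation \eqref{eq:sympre} can be read as $B^\flat(\{x_1,\cdots,x_n\})=ad^{*}(x_1,\cdots,x_{n-1})B^\flat(x_n)$ once one notices, via \eqref{eq:dual}, that the right-hand side of \eqref{eq:sympre} is exactly $\langle ad^{*}(x_1,\cdots,x_{n-1})B^\flat(x_n),y\rangle$.

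The key step is to show that $T$ is an $\mathcal{O}$-operator associated to the coadjoint representation $(A^{*},ad^{*})$. Writing $\xi_i=B^\flat(x_i)$, equivalently $x_i=T\xi_i$, I would apply the isomorphism $B^\flat=T^{-1}$ to both sides of the $\mathcal{O}$-operator equation \eqref{eq:Ooperator} and pair the result against an arbitrary $y\in A$. Using \eqref{eq:dual}, the right-hand side collapses to $-\sum_{i=1}^{n}(-1)^{n-i}B(x_i,[x_1,\cdots,\widehat{x_i},\cdots,x_n,y])$, while the left-hand side is $B([x_1,\cdots,x_n],y)$. Hence the $\mathcal{O}$-operator condition for $T$ is, term by term, precisely the symplectic identity \eqref{eq:symp}, which holds by hypothesis. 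Therefore $T$ is an invertible $\mathcal{O}$-operator associated to $(A^{*},ad^{*})$.

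Applying Proposition~\ref{pro:preLieOoper} with $(V,\rho)=(A^{*},ad^{*})$ then produces a compatible $n$-pre-Lie structure on $A$, given by $\{x_1,\cdots,x_n\}_A=T\,ad^{*}(x_1,\cdots,x_{n-1})B^\flat(x_n)$. To finish, I would unwind this formula: applying $B^\flat$ and pairing with $y$ gives $B(\{x_1,\cdots,x_n\}_A,y)=\langle ad^{*}(x_1,\cdots,x_{n-1})B^\flat(x_n),y\rangle=-B(x_n,[x_1,\cdots,x_{n-1},y])$, which is exactly \eqref{eq:sympre}, so the bracket constructed abstractly coincides with the one in the statement. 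Compatibility — that the induced $n$-commutator \eqref{eq:ncc} recovers the original bracket — is built into Proposition~\ref{pro:preLieOoper}; alternatively it follows directly by computing $B([x_1,\cdots,x_n]^C,y)$ from \eqref{eq:ncc} and \eqref{eq:sympre} and comparing the result with \eqref{eq:symp}.

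The one piece of genuine content is the recognition in the second step that the symplectic identity \eqref{eq:symp} is verbatim the $\mathcal{O}$-operator condition for $(B^\flat)^{-1}$ relative to the coadjoint representation; everything else is routine sign and index bookkeeping. The only point requiring care is tracking the factor $(-1)^{n-i}$ together with the sign introduced by the dual representation in \eqref{eq:dual}, so that the two combine to match \eqref{eq:symp} exactly. Once this matching is confirmed, Proposition~\ref{pro:preLieOoper} delivers both the $n$-pre-Lie axioms and compatibility for free.
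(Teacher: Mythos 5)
Your proposal is correct. Note that the paper itself offers no proof of this proposition --- it is quoted from \cite{Ming} --- so there is nothing internal to compare against; but your argument is the standard one and it meshes exactly with the machinery the paper does develop. The one piece of content you identify is right: with $T=(B^{\flat})^{-1}$ and $\xi_i=B^{\flat}(x_i)$, pairing $T^{-1}$ of both sides of \eqref{eq:Ooperator} against $y$ and using \eqref{eq:dual} turns the $\mathcal{O}$-operator condition for the coadjoint representation $(A^{*},ad^{*})$ verbatim into the symplectic identity \eqref{eq:symp} (surjectivity of $B^{\flat}$ guarantees the condition for \emph{all} $\xi_i\in A^{*}$, not just those in the image), and Proposition \ref{pro:preLieOoper} then yields both the $n$-pre-Lie axioms and compatibility, with the resulting bracket $T\,ad^{*}(x_1,\cdots,x_{n-1})B^{\flat}(x_n)$ unwinding to \eqref{eq:sympre} as you compute. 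The only alternative route would be to verify \eqref{n-pre-Lie 1}--\eqref{n-pre-Lie 2} directly from \eqref{eq:symp} by repeated use of non-degeneracy, which is longer and gains nothing; your reduction to the $\mathcal{O}$-operator picture is the cleaner and, in the $n=3$ and Lie-algebra cases in the literature, the standard proof.
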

A quadratic $n$-pre-Lie algebra is a $n$-pre-Lie algebra $(A, \{\cdot, \cdots, \cdot\})$ equipped with a nondegenerate
skew-symmetric bilinear form $B\in \wedge^{2}A^{*}$ such that the following invariant condition holds:
\begin{equation}\label{invar}
B(\{x_1,\cdots,x_n\},x_{n+1})=-B(x_n,[x_1,\cdots,x_{n-1},x_{n+1}]^C),\;\forall x_1,\cdots,x_{n+1}\in A.
\end{equation}
Proposition \ref{pro:sympre} tells us that quadratic $n$-pre-Lie algebras are the underlying structures of symplectic
$n$-Lie algebras.\\

Let $V$ be a vector space and $V^{*}$ its dual space. Then there exists a natural non-degenerate skew-symmetric bilinear form $B$
on $T^{*}V=V\oplus V^{*}$ given by:
\begin{equation}\label{eq:dual2}
B(x+f,y+g)=\langle f,y\rangle-\langle g,x\rangle,\;\forall x,y\in V,\;f,g\in V^{*}.
\end{equation}
\begin{defi}
 Let $(\mathfrak{h},[\cdot,\cdots,\cdot]_\mathfrak{h})$ be a $n$-Lie algebra and $\mathfrak{h}^{*}$ its dual space.\\
 
 $\bullet$ If there is a $n$-Lie algebra structure $[\cdot,\cdots,\cdot]$
on the direct sum vector space $T^{*}\mathfrak{h}=\mathfrak{h}\oplus \mathfrak{h}^{*}$ such that $(\mathfrak{h}\oplus \mathfrak{h}^{*},[\cdot,\cdots,\cdot],B)$ is a symplectic $n$-Lie algebra, where
$B$ is given by~\eqref{eq:dual2}, and $(\mathfrak{h},[\cdot,\cdots,\cdot]_\mathfrak{h})$ and $(\mathfrak{h}^{*},[\cdot,\cdots,\cdot]_{\mathfrak{h}^{*}})$ are $n$-Lie subalgebras of $(\mathfrak{h}\oplus \mathfrak{h}^{*},[\cdot,\cdots,\cdot])$, then the symplectic $n$-Lie algebra $(\mathfrak{h}\oplus \mathfrak{h}^{*},[\cdot,\cdots,\cdot],B)$ is called a phase space of the
$n$-Lie algebra $(\mathfrak{h},[\cdot,\cdots,\cdot]_\mathfrak{h})$.\\

$\bullet$ A phase space $(\mathfrak{h} \oplus \mathfrak{h}^{*}, [\cdot, \cdots, \cdot], B)$ is called perfect if the following conditions are satisfied:
\begin{equation}\label{perfspace}
    [x_1,\cdots,x_{n-1},\alpha]\in \mathfrak{h}^{*},\quad [\alpha_1,\cdots,\alpha_{n-1},x]\in \mathfrak{h},\;\forall x,x_1,\cdots,x_{n-1}\in \mathfrak{h},\;\alpha, \alpha_1,\cdots,\alpha_{n-1}\in \mathfrak{h}^{*}.
\end{equation}
\end{defi}
Next, we show the important role of $n$-pre-Lie algebras in the study of phase spaces of $n$-Lie algebras.
\begin{thm}\label{phspace}
A $n$-Lie algebra has a phase space if and only if it is sub-adjacent to a $n$-pre-Lie algebra.
\end{thm}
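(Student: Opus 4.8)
The plan is to prove both implications, using as the model phase space the semi-direct product built from the dual of the adjoint representation of an $n$-pre-Lie algebra, and using Proposition \ref{pro:sympre} (together with the observation, recorded just after it, that quadratic $n$-pre-Lie algebras are exactly the underlying structures of symplectic $n$-Lie algebras) as the bridge between the symplectic and the pre-Lie pictures. Throughout I use that, by \eqref{eq:dual2}, both $\mathfrak{h}$ and $\mathfrak{h}^{*}$ are isotropic for $B$; since $B$ is non-degenerate and $\dim\mathfrak{h}=\frac12\dim(\mathfrak{h}\oplus\mathfrak{h}^{*})$, each is in fact a Lagrangian subspace, so $\mathfrak{h}^{\perp}=\mathfrak{h}$ and $(\mathfrak{h}^{*})^{\perp}=\mathfrak{h}^{*}$.

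For the ``only if'' direction, I would start from a phase space $(\mathfrak{h}\oplus\mathfrak{h}^{*},[\cdot,\cdots,\cdot],B)$ and apply Proposition \ref{pro:sympre} to produce the compatible $n$-pre-Lie product $\{\cdot,\cdots,\cdot\}$ on $\mathfrak{h}\oplus\mathfrak{h}^{*}$ determined by \eqref{eq:sympre}. The key step is to show that $\mathfrak{h}$ is closed under this product: for $x_1,\cdots,x_n\in\mathfrak{h}$ and any $y\in\mathfrak{h}$, \eqref{eq:sympre} gives $B(\{x_1,\cdots,x_n\},y)=-B(x_n,[x_1,\cdots,x_{n-1},y])$, and the right-hand side vanishes because $\mathfrak{h}$ is an $n$-Lie subalgebra (so $[x_1,\cdots,x_{n-1},y]\in\mathfrak{h}$) and $\mathfrak{h}$ is isotropic. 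Hence $\{x_1,\cdots,x_n\}\in\mathfrak{h}^{\perp}=\mathfrak{h}$, so $(\mathfrak{h},\{\cdot,\cdots,\cdot\})$ is an $n$-pre-Lie subalgebra. Since the commutator $[\cdot,\cdots,\cdot]^{C}$ of a compatible structure is precisely the ambient $n$-Lie bracket and $\mathfrak{h}$ is a subalgebra of it, the sub-adjacent bracket of $(\mathfrak{h},\{\cdot,\cdots,\cdot\})$ equals $[\cdot,\cdots,\cdot]_{\mathfrak{h}}$; that is, $\mathfrak{h}$ is sub-adjacent to this $n$-pre-Lie algebra.

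For the ``if'' direction, suppose $(\mathfrak{h},\{\cdot,\cdots,\cdot\})$ is an $n$-pre-Lie algebra whose sub-adjacent $n$-Lie algebra $\mathfrak{h}^{c}$ is the given one. I would take the adjoint representation $(\mathfrak{h},L,R)$ and its dual $(\mathfrak{h}^{*},ad^{*},-R^{*})$, and form the semi-direct product $n$-pre-Lie algebra $\mathfrak{h}\ltimes_{ad^{*},-R^{*}}\mathfrak{h}^{*}$ by Proposition \ref{carpre}. Its sub-adjacent $n$-Lie algebra is $\mathfrak{h}^{c}\ltimes_{L^{*}}\mathfrak{h}^{*}$, in which $\mathfrak{h}$ is a subalgebra (its induced bracket is $[\cdot,\cdots,\cdot]_{\mathfrak{h}}$) and $\mathfrak{h}^{*}$ is an abelian subalgebra. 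The plan is then to check that $B$ of \eqref{eq:dual2} turns $\mathfrak{h}\ltimes_{ad^{*},-R^{*}}\mathfrak{h}^{*}$ into a quadratic $n$-pre-Lie algebra, i.e. that the invariance identity \eqref{invar} holds; granting this, the remark following Proposition \ref{pro:sympre} makes $\mathfrak{h}^{c}\ltimes_{L^{*}}\mathfrak{h}^{*}$ symplectic with form $B$, and since $\mathfrak{h}$ and $\mathfrak{h}^{*}$ are Lagrangian subalgebras, this is exactly a phase space of $\mathfrak{h}$.

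The main obstacle is the verification of \eqref{invar}. Writing $X_i=x_i+f_i$ and expanding $\{X_1,\cdots,X_n\}$ by the semi-direct product formula of Proposition \ref{carpre}, I would pair with $X_{n+1}$ via \eqref{eq:dual2}; the isotropy of $\mathfrak{h}$ and $\mathfrak{h}^{*}$ annihilates all purely-$\mathfrak{h}$ and purely-$\mathfrak{h}^{*}$ terms, and the surviving mixed terms should reorganize into the right-hand side of \eqref{invar} precisely because $ad^{*}$ and $-R^{*}$ are, by \eqref{eq:dual}, the (negative) transposes of $ad$ and $R$ against the canonical pairing. I expect this to reduce, after distinguishing cases by how many arguments lie in $\mathfrak{h}^{*}$, to the defining axioms of $(\mathfrak{h},\{\cdot,\cdots,\cdot\})$ and the representation identities of $(L,R)$; each case is mechanical once the pairing is unwound, but tracking the signs $(-1)^{i+1}$ and the omitted-argument bookkeeping is the delicate part.
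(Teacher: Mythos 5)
Your proposal is correct, and the backward direction coincides with the paper's: restrict the compatible $n$-pre-Lie product of Proposition \ref{pro:sympre} to $\mathfrak{h}$ and use isotropy plus $\mathfrak{h}^{\perp}=\mathfrak{h}$ to see that $\mathfrak{h}$ is closed under it (you make the Lagrangian step explicit, which the paper leaves implicit). In the forward direction you build the same model phase space $A^{c}\ltimes_{L^{*}}A^{*}$ with the same form $B$ of \eqref{eq:dual2}, but you organize the central verification differently: the paper expands the symplectic identity \eqref{eq:symp} directly on the $n$-Lie semidirect product, whereas you pass through the $n$-pre-Lie semidirect product $A\ltimes_{ad^{*},-R^{*}}A^{*}$ of Proposition \ref{carpre}, check the quadratic condition \eqref{invar} there, and then deduce \eqref{eq:symp} by summing \eqref{invar} over the alternating sum defining $[\cdot,\cdots,\cdot]^{C}$. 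This is exactly the use of the dual bimodule $(ad^{*},-R^{*})$ that the paper announces at the end of Section 3 but does not actually execute in its proof; it buys you a cleaner computation (the invariance identity is linear in the pre-Lie product, so there is only one alternating sum to track instead of two), at the price of needing the converse of Proposition \ref{pro:sympre} (quadratic $\Rightarrow$ symplectic sub-adjacent), which does follow immediately from \eqref{eq:ncc} as you indicate. I checked that your sign bookkeeping in the verification of \eqref{invar} goes through: the $f_{n}$ and $f_{n+1}$ terms match on the nose, and the $f_{j}$ terms ($j<n$) match after moving $x_{n+1}$ through $n-2$ skew-symmetric slots, which converts $(-1)^{j+1}(-1)^{n}$ into $(-1)^{n-j+1}$. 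So the plan is complete modulo that routine expansion.
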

\begin{proof}($\Leftarrow$) Assume $(A,\{\cdot,\cdots,\cdot\})$ is a $n$-pre-Lie algebra. By Lemma~\ref{lem:rep} and Proposition~\ref{pro:subadj}, the
left multiplication $L$ is a representation of the sub-adjacent $n$-Lie algebra $A^{c}$ on $A$, $L^{*}$ is a representation of the sub-adjacent $n$-Lie
algebra $A^{c}$ on $A^{*}$, then we have a $n$-Lie algebra structure $A^{c}\ltimes_{L^{*}}A^{*}$=$(A\oplus A^{*},[\cdot,\cdots,\cdot]_{L^{*}})$. For all $x_1,\cdots,x_{n+1}\in A$
and $\alpha_1,\cdots,\alpha_{n+1}\in A^{*}$, by Eq.\eqref{eq:dual2} we have
\begin{align*}
&B\Big([x_1+\alpha_1,\cdots,x_n+\alpha_n]_{L^{*}},x_{n+1}+\alpha_{n+1}\Big)\\
&\quad +\sum_{i=1}^{n}(-1)^{n-i}B\Big(x_i+\alpha_i,[x_1+\alpha_1,\cdots,\widehat{x_i+\alpha_i},\cdots,,x_n+\alpha_n,x_{n+1}+\alpha_{n+1}]_{L^{*}}\Big)\\
&=B\Big([x_1,\cdots,x_n]^C+\sum_{j=1}^{n}(-1)^{n-j}L^{*}(x_1,\cdots,\widehat{x_j},\cdots,x_n)\alpha_j,x_{n+1}+\alpha_{n+1}\Big)\\
&+\sum_{i=1}^{n}(-1)^{n-i}B\Big(x_i+\alpha_i,[x_1,\cdots,\widehat{x_i},\cdots,x_n,x_{n+1}]^C+\sum_{1\leq j<i\leq {n}}(-1)^{n-j}L^{*}(x_1,\cdots,\widehat{x_j},\cdots,\widehat{x_i},\cdots,x_n,x_{n+1})\alpha_j\\
&+\sum_{1\leq i<j\leq {n}}(-1)^{n-j+1}L^{*}(x_1,\cdots,\widehat{x_i},\cdots,\widehat{x_j},\cdots,x_n,x_{n+1})\alpha_j+L^{*}(x_1,\cdots,\widehat{x_i},\cdots,x_n)\alpha_{n+1}\Big)\\
&=\langle \sum_{j=1}^{n}(-1)^{n-j}L^{*}(x_1,\cdots,\widehat{x_j},\cdots,x_n)\alpha_j,x_{n+1} \rangle - \langle \alpha_{n+1},[x_1,\cdots,x_n]^C \rangle\\ 
&+\sum_{i=1}^{n}(-1)^{n-i}\Big(\langle \alpha_i,[x_1,\cdots,\widehat{x_i},\cdots,x_n,x_{n+1}]^C\rangle-\langle \sum_{1\leq j<i\leq {n}}(-1)^{n-j}L^{*}(x_1,\cdots,\widehat{x_j},\cdots,\widehat{x_i},\cdots,x_n,x_{n+1})\alpha_j\\
&+\sum_{1\leq i<j\leq {n}}(-1)^{n-j+1}L^{*}(x_1,\cdots,\widehat{x_i},\cdots,\widehat{x_j},\cdots,x_n,x_{n+1})\alpha_j+L^{*}(x_1,\cdots,\widehat{x_i},\cdots,x_n)\alpha_{n+1} ,x_i\rangle \Big).
\end{align*}
And by Eq.~\eqref{eq:dual}, we have
\begin{align*}
&B\Big([x_1+\alpha_1,\cdots,x_n+\alpha_n]_{L^{*}},x_{n+1}+\alpha_{n+1}\Big)\\
&\quad +\sum_{i=1}^{n}(-1)^{n-i}B\Big(x_i+\alpha_i,[x_1+\alpha_1,\cdots,\widehat{x_i+\alpha_i},\cdots,,x_n+\alpha_n,x_{n+1}+\alpha_{n+1}]_{L^{*}}\Big)\\
&=-\langle \alpha_j,\sum_{j=1}^{n}(-1)^{n-j}L(x_1,\cdots,\widehat{x_j},\cdots,x_n)x_{n+1} \rangle - \langle \alpha_{n+1},[x_1,\cdots,x_n]^C \rangle\\ 
&+\sum_{i=1}^{n}(-1)^{n-i}\Big(\langle \alpha_i,[x_1,\cdots,\widehat{x_i},\cdots,x_n,x_{n+1}]^C\rangle+\langle \alpha_j, \sum_{1\leq j<i\leq {n}}(-1)^{n-j}L(x_1,\cdots,\widehat{x_j},\cdots,\widehat{x_i},\cdots,x_n,x_{n+1})x_i\\
&+\sum_{1\leq i<j\leq {n}}(-1)^{n-j+1}L(x_1,\cdots,\widehat{x_i},\cdots,\widehat{x_j},\cdots,x_n,x_{n+1})x_i\rangle+\langle \alpha_{n+1},L(x_1,\cdots,\widehat{x_i},\cdots,x_n)x_i\rangle \Big)\\
&=-\langle \alpha_j,\sum_{j=1}^{n}(-1)^{n-j}\{x_1,\cdots,\widehat{x_j},\cdots,x_n,x_{n+1}\} \rangle - \langle \alpha_{n+1},[x_1,\cdots,x_n]^C \rangle\\ 
&+\sum_{i=1}^{n}(-1)^{n-i}\Big(\langle \alpha_i,[x_1,\cdots,\widehat{x_i},\cdots,x_n,x_{n+1}]^C\rangle+\langle \alpha_j, \sum_{1\leq j<i\leq {n}}(-1)^{n-j}\{x_1,\cdots,\widehat{x_j},\cdots,\widehat{x_i},\cdots,x_n,x_{n+1},x_i\}\\
&+\sum_{1\leq i<j\leq {n}}(-1)^{n-j+1}\{x_1,\cdots,\widehat{x_i},\cdots,\widehat{x_j},\cdots,x_n,x_{n+1},x_i\}\rangle+\langle \alpha_{n+1},\{x_1,\cdots,\widehat{x_i},\cdots,x_n,x_i\}\rangle \Big)=0.
\end{align*}
Since $\{x_{\sigma(1)},\cdots,x_{\sigma(n-1)},x_n\}=Sgn(\sigma)\{x_1,\cdots,x_{n-1},x_n\}$ and by Eq.\eqref{eq:ncc}, we deduce that $B$ is a symplectic structure on
the semi-direct product $n$-Lie algebra $A^{c}\ltimes_{L^{*}}A^{*}$. Moreover, $(A^{c},[\cdot,\cdots,\cdot]^{C})$ is  a subalgebra of $A^{c}\ltimes_{L^{*}}A^{*}$ and $A^{*}$ is an abelian subalgebra of $A^{c}\ltimes_{L^{*}}A^{*}$. Thus, the symplectic $n$-Lie algebra $(A^{c}\ltimes_{L^{*}}A^{*},B)$ is a phase space of the sub-adjacent $n$-Lie algebra $(A^{c},[\cdot,\cdots,\cdot]^{C})$.\\
($\Rightarrow$) Let $(T^{*}\mathfrak{h}=\mathfrak{h}\oplus \mathfrak{h}^{*},[\cdot,\cdots,\cdot],B)$ be a phase space of a $n$-Lie algebra $(\mathfrak{h},[\cdot,\cdots,\cdot]_{\mathfrak{h}})$. By Proposition
\ref{pro:sympre}, there exists a compatible $n$-pre-Lie algebra structure $\{\cdot,\cdots,\cdot\}$ on $T^{*}\mathfrak{h}$ given by Eq.~\eqref{eq:sympre}.
Since $(\mathfrak{h},[\cdot,\cdots,\cdot]_\mathfrak{h})$ is a subalgebra of $(\mathfrak{h}\oplus \mathfrak{h}^{*},[\cdot,\cdots,\cdot])$, we have
\begin{align*}
&B(\{x_1,\cdots,x_n\},x_{n+1})=-B(x_n,[x_1,\cdots,x_{n-1},x_{n+1}])\\
&-B(x_n,[x_1,\cdots,x_{n-1},x_{n+1}]_\mathfrak{h})=0,\;\forall x_1,\cdots,x_{n+1}\in \mathfrak{h}.
\end{align*}
Thus, $\{x_1,\cdots,x_n\}\in \mathfrak{h}$, which implies that $(\mathfrak{h},\{\cdot,\cdots,\cdot\}_\mathfrak{h})$ is a subalgebra of the $n$-pre-Lie algebra $(\mathfrak{h}\oplus \mathfrak{h}^{*},\{\cdot,\cdots,\cdot\})$. Its sub-adjacent $n$-Lie algebra $(\mathfrak{h}^{c},[\cdot,\cdots,\cdot]^{C})$ is exactly the original $n$-Lie algebra
$(\mathfrak{h},[\cdot,\cdots,\cdot]_\mathfrak{h})$.
\end{proof}
\begin{cor}\label{suball}
If $(\mathfrak{h}\oplus \mathfrak{h}^{*},[\cdot,\cdots,\cdot],B)$ is a phase space of a $n$-Lie algebra $(\mathfrak{h},[\cdot,\cdots,\cdot]_\mathfrak{h})$ and $(\mathfrak{h}\oplus \mathfrak{h}^{*},\{\cdot,\cdots,\cdot\})$ is 
the associated $n$-pre-Lie algebra. Then both $(\mathfrak{h},\{\cdot,\cdots,\cdot\}_\mathfrak{h})$ and $( \mathfrak{h}^{*},\{\cdot,\cdots,\cdot\}_{\mathfrak{h}^{*}})$ are subalgebras of the
$n$-pre-Lie algebra $(\mathfrak{h}\oplus \mathfrak{h}^{*},\{\cdot,\cdots,\cdot\})$.
\end{cor}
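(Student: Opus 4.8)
The plan is to read off the compatible $n$-pre-Lie structure $\{\cdot,\cdots,\cdot\}$ on $T^*\mathfrak{h}=\mathfrak{h}\oplus\mathfrak{h}^*$ supplied by Proposition~\ref{pro:sympre}, i.e. the product determined by \eqref{eq:sympre}, and then to verify separately that $\mathfrak{h}$ and $\mathfrak{h}^*$ are each closed under it. In fact the closedness of $\mathfrak{h}$ is already carried out inside the proof of Theorem~\ref{phspace}, so the genuinely new content is the symmetric statement for $\mathfrak{h}^*$; I would present the two cases in parallel, since the argument is identical after interchanging the roles of $\mathfrak{h}$ and $\mathfrak{h}^*$.

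The structural fact I would isolate first is that, with respect to the form $B$ of \eqref{eq:dual2}, both $\mathfrak{h}$ and $\mathfrak{h}^*$ are isotropic: from $B(x+f,y+g)=\langle f,y\rangle-\langle g,x\rangle$ one reads immediately that $B(x,y)=0$ for $x,y\in\mathfrak{h}$ and $B(\xi,\eta)=0$ for $\xi,\eta\in\mathfrak{h}^*$. Since each of $\mathfrak{h}$ and $\mathfrak{h}^*$ has dimension equal to $\dim\mathfrak{h}$, that is, half of $\dim(\mathfrak{h}\oplus\mathfrak{h}^*)$, each is a Lagrangian subspace and hence coincides with its own $B$-orthogonal complement. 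This is the mechanism that will upgrade an orthogonality relation into an actual membership statement.

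For $\mathfrak{h}$, I would take $x_1,\cdots,x_n\in\mathfrak{h}$ and test $\{x_1,\cdots,x_n\}$ against an arbitrary $y\in\mathfrak{h}$ via \eqref{eq:sympre}, obtaining $B(\{x_1,\cdots,x_n\},y)=-B(x_n,[x_1,\cdots,x_{n-1},y])$. Because $\mathfrak{h}$ is an $n$-Lie subalgebra of the phase space, $[x_1,\cdots,x_{n-1},y]\in\mathfrak{h}$, and isotropy of $\mathfrak{h}$ forces this quantity to vanish; thus $\{x_1,\cdots,x_n\}$ is $B$-orthogonal to all of $\mathfrak{h}$, and by Lagrangianness it lies in $\mathfrak{h}$. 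Concretely, writing $\{x_1,\cdots,x_n\}=h+\phi$, the vanishing reads $\langle\phi,y\rangle=0$ for every $y$, so $\phi=0$. The very same computation with $\alpha_1,\cdots,\alpha_n,\beta\in\mathfrak{h}^*$ yields $B(\{\alpha_1,\cdots,\alpha_n\},\beta)=-B(\alpha_n,[\alpha_1,\cdots,\alpha_{n-1},\beta])=0$, using that $\mathfrak{h}^*$ is an $n$-Lie subalgebra and is isotropic, whence $\{\alpha_1,\cdots,\alpha_n\}\in\mathfrak{h}^*$.

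I do not expect a serious obstacle: the content is entirely in the two defining hypotheses of a phase space (that $\mathfrak{h}$ and $\mathfrak{h}^*$ are $n$-Lie subalgebras) combined with the compatibility \eqref{eq:sympre}. The only point deserving care is the passage from ``$B$-orthogonal to the subspace'' to ``lies in the subspace'', which is precisely where maximality of the isotropic subspaces $\mathfrak{h}$ and $\mathfrak{h}^*$ is needed; non-degeneracy of $B$ by itself would not suffice. I would therefore state the Lagrangian observation once and invoke it in both cases.
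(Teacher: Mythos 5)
Your proposal is correct and follows essentially the same route as the paper: the paper establishes the $\mathfrak{h}$ case inside the proof of Theorem~\ref{phspace} by computing $B(\{x_1,\cdots,x_n\},x_{n+1})=-B(x_n,[x_1,\cdots,x_{n-1},x_{n+1}]_{\mathfrak{h}})=0$ and concluding membership in $\mathfrak{h}$, which is exactly your isotropy-plus-Lagrangian mechanism, and the $\mathfrak{h}^*$ case is the same argument with the roles interchanged. Your explicit remark that non-degeneracy alone would not suffice and that maximality of the isotropic subspaces is what converts orthogonality into membership is a worthwhile clarification of a step the paper leaves implicit.
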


\begin{pro}
Let $\rho$ be a representation of $(\mathfrak{h},[\cdot,\cdots,\cdot]_\mathfrak{h})$ on $\mathfrak{h}$ and $\rho^{*}$ is its dual representation. If $(\mathfrak{h}\oplus \mathfrak{h}^{*},[\cdot,\cdots,\cdot],B)$ is a phase space of a $n$-Lie algebra $(\mathfrak{h},[\cdot,\cdots,\cdot]_\mathfrak{h})$ such that the $n$-Lie algebra $(\mathfrak{h}\oplus \mathfrak{h}^{*},[\cdot,\cdots,\cdot])$ is a semi-direct 
product $\mathfrak{h}\ltimes_{\rho^{*}}\mathfrak{h}^{*}$, then 
\begin{equation}
\{x_1,\cdots,x_n\}=\rho(x_1,\cdots,x_{n-1})x_n,\;\forall x_1,\cdots,x_n\in \mathfrak{h}.
\end{equation} 
defines a $n$-pre-Lie algebra structure on $\mathfrak{h}$.
\end{pro}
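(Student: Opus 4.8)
The plan is to identify the product $\{x_1,\cdots,x_n\}=\rho(x_1,\cdots,x_{n-1})x_n$ with the restriction to $\mathfrak{h}$ of the canonical $n$-pre-Lie structure carried by the symplectic $n$-Lie algebra $\mathfrak{h}\ltimes_{\rho^{*}}\mathfrak{h}^{*}$. First I would invoke Proposition~\ref{pro:sympre}: since $(\mathfrak{h}\oplus\mathfrak{h}^{*},[\cdot,\cdots,\cdot],B)$ is a symplectic $n$-Lie algebra, there is a compatible $n$-pre-Lie structure $\{\cdot,\cdots,\cdot\}$ on $T^{*}\mathfrak{h}=\mathfrak{h}\oplus\mathfrak{h}^{*}$ determined by~\eqref{eq:sympre}. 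By Corollary~\ref{suball} (equivalently, by the $(\Rightarrow)$ part of Theorem~\ref{phspace}), $\mathfrak{h}$ is a subalgebra of this $n$-pre-Lie algebra; in particular $\{x_1,\cdots,x_n\}\in\mathfrak{h}$ whenever all $x_i\in\mathfrak{h}$. Thus it suffices to show that this restricted product agrees with $\rho(x_1,\cdots,x_{n-1})x_n$, since the $n$-pre-Lie axioms on $\mathfrak{h}$ are then inherited from those on $T^{*}\mathfrak{h}$.

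The core of the argument is a single pairing computation. Write $z=\{x_1,\cdots,x_n\}\in\mathfrak{h}$ for $x_i\in\mathfrak{h}$. Because $\mathfrak{h}$ is Lagrangian for $B$ (indeed $B(x,y)=0$ for all $x,y\in\mathfrak{h}$ by~\eqref{eq:dual2}), the element $z$ is completely determined by its pairings $B(z,g)$ against $g\in\mathfrak{h}^{*}$. I would therefore evaluate~\eqref{eq:sympre} at $y=g\in\mathfrak{h}^{*}$: the right-hand side requires the bracket $[x_1,\cdots,x_{n-1},g]$, which in the semi-direct product $\mathfrak{h}\ltimes_{\rho^{*}}\mathfrak{h}^{*}$ collapses to the single surviving term $\rho^{*}(x_1,\cdots,x_{n-1})(g)\in\mathfrak{h}^{*}$ (the $\mathfrak{h}$-bracket part vanishes because one slot is in $\mathfrak{h}^{*}$).

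Substituting and unwinding both sides through~\eqref{eq:dual2} gives $B(z,g)=-\langle g,z\rangle$ on the left and $-B\big(x_n,\rho^{*}(x_1,\cdots,x_{n-1})g\big)=\langle\rho^{*}(x_1,\cdots,x_{n-1})g,x_n\rangle$ on the right. Applying the definition of the dual representation~\eqref{eq:dual} rewrites the latter as $-\langle g,\rho(x_1,\cdots,x_{n-1})x_n\rangle$. Equating the two expressions and using that $g$ ranges over all of $\mathfrak{h}^{*}$ (non-degeneracy of the canonical pairing) yields $z=\rho(x_1,\cdots,x_{n-1})x_n$, which is exactly the claimed formula.

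I expect the only delicate point to be the sign bookkeeping, and this is where I would concentrate the written proof: one must simultaneously track the factor $(-1)^{n-i}$ in the semi-direct product bracket (only the $i=n$ term survives, contributing $+1$), the minus sign in the form~\eqref{eq:dual2}, and the minus sign in the dual representation~\eqref{eq:dual}. Everything else is formal, since membership $\{x_1,\cdots,x_n\}\in\mathfrak{h}$ is guaranteed by the subalgebra property and the identities~\eqref{n-pre-Lie 1}--\eqref{n-pre-Lie 2} hold on $\mathfrak{h}$ precisely because they hold on the ambient $n$-pre-Lie algebra $T^{*}\mathfrak{h}$ and $\mathfrak{h}$ is closed under the product.
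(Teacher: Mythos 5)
Your proposal is correct and follows essentially the same route as the paper: pair $\{x_1,\cdots,x_n\}$ against an arbitrary element of $\mathfrak{h}^{*}$, use \eqref{eq:sympre} together with the fact that $[x_1,\cdots,x_{n-1},g]=\rho^{*}(x_1,\cdots,x_{n-1})g$ in the semi-direct product, and unwind through \eqref{eq:dual2} and \eqref{eq:dual}; your sign bookkeeping matches the paper's computation exactly. If anything you are slightly more careful than the paper, which silently assumes $\{x_1,\cdots,x_n\}\in\mathfrak{h}$ and that the $n$-pre-Lie axioms restrict, whereas you justify both via Corollary \ref{suball} and the subalgebra property.
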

\begin{proof} For all $x_1,\cdots,x_n\in \mathfrak{h}$, $\alpha\in \mathfrak{h}^{*}$, we have 
\begin{align*}
&\langle \alpha,\{x_1,\cdots,x_n\}\rangle=-B(\{x_1,\cdots,x_n\},\alpha)=B(x_n,[x_1,\cdots,x_{n-1},\alpha]_{\mathfrak{h}\oplus \mathfrak{h}^{*}})\\
&B(x_n,\rho^{*}(x_1,\cdots,x_{n-1})\alpha)=-\langle \rho^{*}(x_1,\cdots,x_{n-1})\alpha,x_n\rangle=\langle \alpha,\rho(x_1,\cdots,x_{n-1})x_n\rangle.
\end{align*}
Therefore, $\{x_1,\cdots,x_n\}=\rho(x_1,\cdots,x_{n-1})x_n$.
\end{proof}
\begin{ex}
Let $(A,\{\cdot,\cdots,\cdot\}_A$ be a $n$-pre-Lie algebra. Since there is a semi-direct product $n$-pre-Lie algebra $(A\ltimes_{L^{*},0}A^{*},\{\cdot,\cdots,\cdot\}_{L^{*},0})$ on the phase space $T^{*}A^c=A^c\ltimes_{L^{*}}A^{*}$, one can construct a new phase space $T^{*}A^c \ltimes_{L^{*}}(T^{*}A^c)^{*}$. This process can be continued indefinitely. Hence,
there exist a series of phase spaces $\{A_{(m)}\}_{m\geq 2}:$ 
\begin{equation*}
   A_{(1)}= A^c,\;\;\;A_{(2)}=T^{*}A_{(1)}=A^c\ltimes_{L^{*}}A^{*},\cdots,\;\;\;A_{(m)}=T^{*}A_{(m-1)},\cdots.
\end{equation*}
$A_{(m)}\; (m\geq 2)$ is called the symplectic double of $A_{(m-1)}$.
\end{ex}
At the end of this section, we introduce the notion of  Manin triple of $n$-pre-Lie algebras.
\begin{defi}
A Manin triple of $n$-pre-Lie algebras is a triple $(\mathcal{A};A,A')$, where \\
$\bullet$ $(\mathcal{A},\{\cdot,\cdots,\cdot\},B)$ is a quadratic $n$-pre-Lie algebra,\\
$\bullet$ both $A$ and $A'$ are isotropic subalgebras of $(\mathcal{A},\{\cdot,\cdots,\cdot\})$,\\
$\bullet$ $\mathcal{A}=A \oplus A'$ as vector spaces,\\
$\bullet$ for all $x,x_1,\cdots,x_{n-1}\in A$ and $\alpha,\alpha_1,\cdots,\alpha_{n-1}\in A'$, holds:
\begin{equation}\label{condmanin}
    \{x_1,\cdots,x_{n-1},\alpha\}\in A',\; \{\alpha,x_1,\cdots,x_{n-1}\}\in A',\; \{\alpha_1,\cdots,\alpha_{n-1},x\}\in A,\; \{x,\alpha_1,\cdots,\alpha_{n-1}\}\in A. 
\end{equation}
\end{defi}
In a Manin triple of $n$-pre-Lie algebras, since the skew-symmetric bilinear form $B$ is nondegenerate,
$A'$ can be identified with $A^{*}$ via
\begin{equation*}
  \langle \alpha,x\rangle \triangleq B(\alpha,x),\;\forall x\in A,\; \alpha \in A'.  
\end{equation*}
Thus, $\mathcal{A}$ is isomorphic to $A\oplus A^{*}$  naturally and the bilinear form $B$ is exactly given by \eqref{eq:dual2}. By
the invariant condition \eqref{invar}, we can obtain the precise form of the $n$-pre-Lie structure $\{\cdot, \cdots,\cdot\}$ on
$A \oplus A^{*}$.
\begin{pro}\label{promanin}
Let $(A\oplus A^{*};A,A^{*})$ be a Manin triple of $n$-pre-Lie algebras, where the nondegenerate
skew-symmetric bilinear form $B$ on the $n$-pre-Lie algebra is given by \eqref{eq:dual2}. Then we
have
\begin{align}
 &\label{manin1}\{x_1,\cdots,x_{n-1},\alpha\}=\Big(L^{*}(x_1,\cdots,x_{n-1})+\sum_{i=1}^{n-1}(-1)^{i}R^{*}(x_1,\cdots,\widehat{x_i},\cdots,x_{n-1},x_i)\Big)\alpha,\\
  &\label{manin2}\{\alpha,x_1,\cdots,x_{n-1}\}=-R^{*}(x_1,\cdots,x_{n-1})\alpha,\\
   &\label{manin3}\{\alpha_1,\cdots,\alpha_{n-1},x\}= \Big(\mathcal{L}^{*}(\alpha_1,\cdots,\alpha_{n-1})+\sum_{i=1}^{n-1}(-1)^{i}\mathcal{R}^{*}(\alpha_1,\cdots,\widehat{\alpha_i},\cdots,\alpha_{n-1},\alpha_i)\Big)x,\\
     &\label{manin4}\{x,\alpha_1,\cdots,\alpha_{n-1}\}=-\mathcal{R}^{*}(\alpha_1,\cdots,\alpha_{n-1})x.
\end{align}
\end{pro}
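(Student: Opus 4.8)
The plan is to pin down each of the four products by pairing it through the nondegenerate form $B$ against a single test element and then invoking the invariance identity \eqref{invar}; this is essentially the only structural tool available. First I would record the pairing dictionary coming from \eqref{eq:dual2}: $B(\alpha,y)=\langle\alpha,y\rangle$ and $B(y,\alpha)=-\langle\alpha,y\rangle$ for $y\in A$, $\alpha\in A^{*}$, while $B$ vanishes on $A\times A$ and on $A^{*}\times A^{*}$ since $A,A^{*}$ are isotropic. By \eqref{condmanin} the products $\{x_1,\cdots,x_{n-1},\alpha\}$ and $\{\alpha,x_1,\cdots,x_{n-1}\}$ lie in $A^{*}$, so each is completely determined by its pairing $\langle\,\cdot\,,x_n\rangle$ against an arbitrary $x_n\in A$; dually for the products landing in $A$. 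Throughout I use that, by Proposition \ref{pro:comm}, the sub-adjacent bracket $[\cdot,\cdots,\cdot]^{C}$ is a genuine $n$-Lie bracket and hence skew-symmetric.

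For \eqref{manin1} I would apply \eqref{invar} with arguments $(x_1,\cdots,x_{n-1},\alpha)$ and test element $x_n$, obtaining $B(\{x_1,\cdots,x_{n-1},\alpha\},x_n)=-B(\alpha,[x_1,\cdots,x_{n-1},x_n]^{C})$. Because $A$ is a subalgebra of the $n$-pre-Lie algebra, $[x_1,\cdots,x_{n-1},x_n]^{C}$ stays in $A$ and coincides with the sub-adjacent bracket computed inside $A$; expanding its definition \eqref{eq:ncc} and reorganizing the terms with $i<n$ via the skew-symmetry in the first $n-1$ slots rewrites it, using \eqref{eq:R} and the right multiplication $R$, as $L(x_1,\cdots,x_{n-1})x_n+\sum_{i=1}^{n-1}(-1)^{i}R(x_1,\cdots,\widehat{x_i},\cdots,x_{n-1},x_i)x_n$. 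Dualizing each summand through \eqref{eq:dual} turns $-B(\alpha,\cdot)=-\langle\alpha,\cdot\rangle$ into $\langle(L^{*}+\sum_i(-1)^{i}R^{*})\alpha,x_n\rangle$, and nondegeneracy of the $A^{*}\times A$ pairing yields \eqref{manin1}.

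The identity \eqref{manin2} I expect to fall out most cleanly, without ever locating the mixed brackets. After pairing with an arbitrary $x_n\in A$ and using \eqref{eq:dual}, it is equivalent to $B(\{\alpha,x_1,\cdots,x_{n-1}\},x_n)=B(\alpha,\{x_n,x_1,\cdots,x_{n-1}\})$. Applying \eqref{invar} to the left side gives $-B(x_{n-1},[\alpha,x_1,\cdots,x_{n-2},x_n]^{C})$; applying it to $B(\{x_n,x_1,\cdots,x_{n-1}\},\alpha)$ and using skew-symmetry of $B$ gives $+B(x_{n-1},[x_n,x_1,\cdots,x_{n-2},\alpha]^{C})$. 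Since $[\cdot,\cdots,\cdot]^{C}$ is skew-symmetric, interchanging $\alpha$ and $x_n$ in the first and last slots introduces exactly one sign, so the two expressions coincide and \eqref{manin2} follows.

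Finally, \eqref{manin3} and \eqref{manin4} follow by the symmetry of the situation. Exchanging the two isotropic factors produces the triple $(A^{*}\oplus A;A^{*},A)$, which is again a Manin triple of $n$-pre-Lie algebras whose invariant form is $-B$; one checks directly that $-B$ is precisely the canonical pairing \eqref{eq:dual2} associated to the vector space $A^{*}$. Hence the already-established \eqref{manin1} and \eqref{manin2} apply verbatim with $A$ replaced by $A^{*}$ and $(L,R)$ replaced by $(\mathcal{L},\mathcal{R})$, giving \eqref{manin3} and \eqref{manin4}. The only delicate bookkeeping is the sign accounting in the expansion used for \eqref{manin1} and the verification that $-B$ is the canonical form after the swap; the conceptual core, \eqref{manin2}, reduces to nothing beyond the skew-symmetry of the sub-adjacent bracket.
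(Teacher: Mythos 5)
Your proposal is correct and follows essentially the same route as the paper: pair each product against a test element through $B$, apply the invariance condition \eqref{invar}, expand the sub-adjacent bracket via \eqref{eq:ncc} into $L$ and $R$ terms (for \eqref{manin1}) or use skew-symmetry of $[\cdot,\cdots,\cdot]^{C}$ (for \eqref{manin2}), and conclude by nondegeneracy and \eqref{eq:dual}. Your explicit duality-swap justification for \eqref{manin3} and \eqref{manin4} is a slightly more careful rendering of the paper's ``similarly,'' but it is the same argument.
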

\begin{proof}
$\forall x_1,\cdots,x_n \in A,\; \alpha \in A^{*}$, we have
\begin{align*}
  &\quad \;\langle\{x_1,\cdots,x_{n-1},\alpha\},x_n\rangle=B( \{x_1,\cdots,x_{n-1},\alpha\},x_n)\\
   &=-B(\alpha,[x_1,\cdots,x_n]^{C})=-B(\alpha,\sum_{i=1}^{n}(-1)^{n-i}\{x_1,\cdots,\widehat{x_i},\cdots,x_n,x_i\})\\
    &=-B(\alpha,\Big(L(x_1,\cdots,x_{n-1})+\sum_{i=1}^{n-1}(-1)^{i}R(x_1,\cdots,\widehat{x_i},\cdots,x_{n-1},x_i)\Big)x_n)\\
   &=-\langle \alpha,\Big(L(x_1,\cdots,x_{n-1})+\sum_{i=1}^{n-1}(-1)^{i}R(x_1,\cdots,\widehat{x_i},\cdots,x_{n-1},x_i)\Big)x_n \rangle\\
   &=\langle \Big(L^{*}(x_1,\cdots,x_{n-1})+\sum_{i=1}^{n-1}(-1)^{i}R^{*}(x_1,\cdots,\widehat{x_i},\cdots,x_{n-1},x_i)\Big)\alpha,x_n \rangle,
\end{align*}
which implies that \eqref{manin1} holds. We have
\begin{align*}
   &\quad \;\langle \{\alpha,x_1,\cdots,x_{n-1}\},x_n\rangle =B( \{\alpha,x_1,\cdots,x_{n-1}\},x_n)=-B(x_{n-1},[\alpha,x_1,\cdots,x_{n-2},x_n]^{C})\\
   &=B(x_{n-1},[x_n,x_1,\cdots,x_{n-2},\alpha]^{C})=-B(\{x_n,x_1,\cdots,x_{n-1}\},\alpha)=B(\alpha,\{x_n,x_1,\cdots,x_{n-1}\})\\
      &=\langle \alpha,R(x_1,\cdots,x_{n-1})x_n \rangle =-\langle R^{*}(x_1,\cdots,x_{n-1})\alpha,x_n \rangle,
\end{align*}
which implies that \eqref{manin2} holds. Similarly, we can deduce that \eqref{manin3} and \eqref{manin4} hold.
\end{proof}
\begin{thm}
There is a one-to-one correspondence between Manin triples of $n$-pre-Lie algebras
and perfect phase spaces of $n$-Lie algebras. More precisely, if $(A\oplus A^{*};A,A^{*})$ is a Manin triple
of $n$-pre-Lie algebras, then $(A \oplus A^{*}, [\cdot, \cdots, \cdot]^{C},B)$ is a symplectic $n$-Lie algebra, where $B$ is given by
\eqref{eq:dual2}. Conversely, if $(\mathfrak{h} \oplus \mathfrak{h}^{*}, [\cdot, \cdots, \cdot],B)$ is a perfect phase space of a $n$-Lie algebra $(\mathfrak{h}, [\cdot, \cdots, \cdot]_\mathfrak{h})$, then
$(\mathfrak{h}\oplus \mathfrak{h}^{*}; \mathfrak{h}, \mathfrak{h}^{*})$ is a Manin triple of $n$-pre-Lie algebras, where the $n$-pre-Lie algebra structure on $\mathfrak{h} \oplus \mathfrak{h}^{*}$
is given by \eqref{eq:sympre}.
\end{thm}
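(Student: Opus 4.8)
The plan is to route both implications through a single engine: a nondegenerate skew-symmetric form $B$ is symplectic for the sub-adjacent bracket $[\cdot,\cdots,\cdot]^C$ exactly when it is invariant for the $n$-pre-Lie product in the sense of \eqref{invar}. I would first isolate this equivalence. Given a quadratic $n$-pre-Lie algebra $(\mathcal{A},\{\cdot,\cdots,\cdot\},B)$, expand $[x_1,\cdots,x_n]^C$ by \eqref{eq:ncc} and apply \eqref{invar} to each summand $\{x_1,\cdots,\widehat{x_j},\cdots,x_n,x_j\}$ (which has $x_j$ in its last slot); this yields at once
\[
B([x_1,\cdots,x_n]^C,y)=-\sum_{j=1}^{n}(-1)^{n-j}B(x_j,[x_1,\cdots,\widehat{x_j},\cdots,x_n,y]^C),
\]
which is precisely the symplectic condition \eqref{eq:symp}, while Proposition~\ref{pro:sympre} runs the implication backwards. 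Thus quadratic $n$-pre-Lie structures and symplectic structures on the sub-adjacent $n$-Lie algebra are interchangeable, and the theorem reduces to matching the \emph{splitting} data on the two sides.

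For the direction Manin triple $\Rightarrow$ perfect phase space, I would start from $(A\oplus A^{*};A,A^{*})$, use the identification $A'\cong A^{*}$ so that $B$ is the canonical pairing \eqref{eq:dual2}, and invoke the engine to conclude that $(A\oplus A^{*},[\cdot,\cdots,\cdot]^C,B)$ is symplectic. Since $[\cdot,\cdots,\cdot]^C$ is assembled from $\{\cdot,\cdots,\cdot\}$ via \eqref{eq:ncc} and $A,A^{*}$ are $n$-pre-Lie subalgebras, they are $n$-Lie subalgebras of $[\cdot,\cdots,\cdot]^C$, so we genuinely have a phase space. Perfectness is then immediate: every summand of $[x_1,\cdots,x_{n-1},\alpha]^C$ is a pre-Lie bracket with exactly one entry from $A^{*}$, sitting either in the last slot or, after using skew-symmetry in the first $n-1$ slots, in the first slot, hence lying in $A^{*}$ by the two $A'$-valued clauses of \eqref{condmanin}; dually for $[\alpha_1,\cdots,\alpha_{n-1},x]^C$. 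This gives \eqref{perfspace}.

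For the converse, given a perfect phase space $(\mathfrak{h}\oplus\mathfrak{h}^{*},[\cdot,\cdots,\cdot],B)$ I would produce the compatible $n$-pre-Lie product $\{\cdot,\cdots,\cdot\}$ of \eqref{eq:sympre} by Proposition~\ref{pro:sympre}; compatibility forces $[\cdot,\cdots,\cdot]^C=[\cdot,\cdots,\cdot]$, so \eqref{eq:sympre} becomes the invariance \eqref{invar} and $(\mathfrak{h}\oplus\mathfrak{h}^{*},\{\cdot,\cdots,\cdot\},B)$ is quadratic. Isotropy of $\mathfrak{h}$ and $\mathfrak{h}^{*}$ is built into \eqref{eq:dual2}, and their being $n$-pre-Lie subalgebras is Corollary~\ref{suball}. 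It remains to verify the four membership clauses of \eqref{condmanin}; since $\mathfrak{h},\mathfrak{h}^{*}$ are Lagrangian, membership in $\mathfrak{h}^{*}$ (resp. $\mathfrak{h}$) is tested by pairing against $\mathfrak{h}^{*}$ (resp. $\mathfrak{h}$) through $B$. The two clauses in which the distinguished entry occupies the last pre-Lie slot, namely $\{x_1,\cdots,x_{n-1},\alpha\}\in\mathfrak{h}^{*}$ and $\{\alpha_1,\cdots,\alpha_{n-1},x\}\in\mathfrak{h}$, fall out immediately: \eqref{eq:sympre} turns the pairing into $B$ of one factor against a singly-mixed Lie bracket that lands in the correct Lagrangian by \eqref{perfspace}, so the pairing vanishes by isotropy.

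The hard part, and the step I expect to be the genuine obstacle, is the remaining two clauses $\{\alpha,x_1,\cdots,x_{n-1}\}\in\mathfrak{h}^{*}$ and $\{x,\alpha_1,\cdots,\alpha_{n-1}\}\in\mathfrak{h}$, where the distinguished entry is forced into the \emph{first} slot so that \eqref{eq:sympre} must retain an ordinary element in the last slot. Then the pairing reduces not to a singly-mixed but to a \emph{doubly}-mixed bracket, for instance $B(\{\alpha,x_1,\cdots,x_{n-1}\},\beta)=-B(x_{n-1},[\alpha,x_1,\cdots,x_{n-2},\beta])$ with two entries from $\mathfrak{h}^{*}$, a configuration not controlled directly by \eqref{perfspace}. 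For $n=3$ this doubly-mixed bracket has $n-1=2$ entries from $\mathfrak{h}^{*}$ and is covered by perfectness, so that case closes cleanly; for $n\ge 4$ I would show that such doubly-mixed brackets respect the splitting (two-$\mathfrak{h}^{*}$ brackets lie in $\mathfrak{h}$, two-$\mathfrak{h}$ brackets in $\mathfrak{h}^{*}$) by feeding singly-mixed brackets into the Filippov--Jacobi identity \eqref{eq:de1} and combining the resulting relations with the symplectic invariance \eqref{eq:symp} and the subalgebra property, forcing the offending $\mathfrak{h}^{*}$- (resp. $\mathfrak{h}$-) component to vanish. Finally I would check that the two constructions are mutually inverse, which is formal once the engine equivalence and Proposition~\ref{promanin} are in hand.
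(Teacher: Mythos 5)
Your overall architecture coincides with the paper's: both directions are routed through the equivalence between the invariance condition \eqref{invar} for the $n$-pre-Lie product and the symplectic condition \eqref{eq:symp} for the sub-adjacent bracket (your one-line ``engine'' computation, which the paper instead re-derives by hand from the explicit formula of Proposition \ref{promanin}), the forward direction then follows from Proposition \ref{promanin}, and the converse rests on Proposition \ref{pro:sympre}, Corollary \ref{suball} and a verification of the four membership clauses \eqref{condmanin}. Your treatment of perfectness in the forward direction, moving the single starred entry into the first or last pre-Lie slot by skew-symmetry of the first $n-1$ arguments, is correct, and the engine lemma is a genuine streamlining.

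The problem is that the one step you yourself single out as ``the genuine obstacle'' is never actually carried out, and it is exactly the step that requires a proof. For the clauses $\{\alpha,x_1,\dots,x_{n-1}\}\in\mathfrak h^{*}$ and $\{x,\alpha_1,\dots,\alpha_{n-1}\}\in\mathfrak h$ of \eqref{condmanin}, formula \eqref{eq:sympre} reduces the relevant pairing to $B(x_{n-1},[\alpha,x_1,\dots,x_{n-2},\beta])$, and for $n\ge 4$ the bracket $[\alpha,x_1,\dots,x_{n-2},\beta]$ has two entries in $\mathfrak h^{*}$, a configuration about which \eqref{perfspace} says nothing. Your plan is to force such doubly-mixed brackets to respect the splitting by combining \eqref{eq:symp}, \eqref{eq:de1} and the subalgebra property, but no computation is given, and the obvious first move does not suffice: applying \eqref{eq:symp} to $B([\alpha,\beta,x_1,\dots,x_{n-2}],x)$ kills, via \eqref{perfspace} and isotropy, only the two terms in which the extracted entry is $\alpha$ or $\beta$, and what survives is a single cyclic-type linear relation among the unknown quantities $B(x_i,[\alpha,\beta,x_{j_1},\dots,x_{j_{n-3}},x_{j_{n-2}}])$ (for $n=4$ it is literally $h(1;2,3)+h(2;3,1)+h(3;1,2)=0$ for a tensor $h$ skew in its last two slots, a Bianchi-type identity that has plenty of nonzero solutions). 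Whether Filippov--Jacobi closes the argument is exactly what would have to be demonstrated. To your credit, the paper's own proof verifies only the first clause of \eqref{condmanin} and dismisses the remaining three with ``similarly'', so you have put your finger on a genuinely thin spot in the published argument --- for $n=3$ everything does close, since there the doubly-mixed bracket has $n-1$ starred entries and is covered by \eqref{perfspace} --- but locating the difficulty is not the same as resolving it, and as written your proof of the converse direction has a gap for $n\ge 4$.
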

\begin{proof}($\Rightarrow$)
Let $(A\oplus A^{*};A,A^{*})$ be a Manin triple of $n$-pre-Lie algebras. Denote by $\{\cdot,\cdots, \cdot\}_A$ and $\{\cdot,\cdots,\cdot\}_{A^{*}}$
the $n$-pre-Lie algebra structure on $A$ and $A^{*}$ respectively, and denote by $[\cdot, \cdots, \cdot]_A$ and $[\cdot, \cdots, \cdot]_{A^{*}}$ the
corresponding sub-adjacent $n$-Lie algebra structure on $A$ and $A^{*}$ respectively. By Proposition \ref{promanin},
it is straightforward to deduce that the corresponding $n$-Lie algebra structure $[\cdot, \cdots, \cdot]^{C}$ on $A\oplus A^{*}$ is
given by
\begin{align*}
   & [x_1+\alpha_1,\cdots,x_n+\alpha_n]^{C }=[x_1,\cdots,x_n]_A+\sum_{i=1}^{n}(-1)^{n-i} \mathcal{L}^{*}(\alpha_1,\cdots,\widehat{\alpha_i},\cdots,\alpha_n)(x_i)\\
   &+[\alpha_1,\cdots,\alpha_n]_{{A}^{*}}+\sum_{i=1}^{n}(-1)^{n-i} L^{*}(x_1,\cdots,\widehat{x_i},\cdots,x_n)(\alpha_i).
\end{align*}
For all $x_1,\cdots,x_{n+1}\in A$ and $\alpha_1,\cdots,\alpha_{n+1}\in A^{*}$, similarly to  the Proof of Theorem \ref{phspace}, we have
\begin{align*}
    &B\Big([x_1+\alpha_1,\cdots,x_n+\alpha_n]^C,x_{n+1}+\alpha_{n+1}\Big)\\
&\quad +\sum_{i=1}^{n}(-1)^{n-i}B\Big(x_i+\alpha_i,[x_1+\alpha_1,\cdots,\widehat{x_i+\alpha_i},\cdots,,x_n+\alpha_n,x_{n+1}+\alpha_{n+1}]^C\Big)=0.
\end{align*}

Since \begin{equation*}
    \{x_1,\cdots,x_n\}_A= Sgn(\sigma)\{x_{\sigma(1)},\cdots,x_{\sigma(n-1)},x_n\},\;\forall \sigma \in S_{n},\;and\;\sigma(n)=n
\end{equation*}
and \begin{equation*}
    \{\alpha_1,\cdots,\alpha_n\}_{A^{*}}= Sgn(\sigma)\{\alpha_{\sigma(1)},\cdots,\alpha_{\sigma(n-1)},\alpha_n\},\;\forall \sigma \in S_{n},\;and\;\sigma(n)=n,
\end{equation*}
then we deduce that $B$ is a
symplectic structure on the $n$-Lie algebra $(A \oplus A^{*}, [\cdot,\cdots, \cdot]^{C})$. Therefore, it is a phase space.\\
($\Leftarrow$) Conversely, let $(\mathfrak{h}\oplus \mathfrak{h}^{*}, [\cdot, \cdots, \cdot], B)$ be a phase space of the $n$-Lie algebra $(\mathfrak{h}, [\cdot, \cdots, \cdot]_{\mathfrak{h}})$. By Proposition
\ref{pro:sympre}, there exists a $n$-pre-Lie algebra structure $\{\cdot, \cdots, \cdot\}$ on $\mathfrak{h} \oplus \mathfrak{h}^{*}$ given by \eqref{eq:sympre} such that $(\mathfrak{h}\oplus
\mathfrak{h}^{*}, \{\cdot, \cdots, \cdot\},B)$ is a quadratic $n$-pre-Lie algebra. By Corollary \ref{suball}, $(\mathfrak{h}, \{\cdot, \cdots, \cdot\}_{\mathfrak{h}})$ and $(\mathfrak{h}^{*}, \{\cdot,\cdots, \cdot\}_{\mathfrak{h}^{*}})$
are $n$-pre-Lie subalgebras of $(\mathfrak{h}\oplus \mathfrak{h}^{*}, \{\cdot,\cdots,\cdot\})$. It is obvious that both $\mathfrak{h}$ and $\mathfrak{h}^{*}$ are isotropic. Thus,
we only need to show that \eqref{condmanin} holds. By \eqref{perfspace}, for all $x_1,\cdots,x_{n-1}\in \mathfrak{h}$ and $\alpha_1,\cdots,\alpha_{n-1}\in \mathfrak{h}^{*}$, we have
\begin{equation*}
    B(\{x_1,\cdots,x_{n-1},\alpha_1\})=-B(\alpha_1,[x_1,\cdots,x_{n-1},\alpha_2]^{C})=0,
\end{equation*}
which implies that $\{x_1,\cdots,x_{n-1},\alpha_1\}\in \mathfrak{h}^{*}$. Similarly, we can show that the other conditions in \eqref{condmanin} also
hold. The proof is finished.
\end{proof}

\subsection{$(n+1)$-pre-Lie algebras induced by $n$-pre-Lie algebras}
In this subsection we show how to derive $(n+1)$-pre-Lie algebras from  $n$-pre-Lie algebras and a trace map, extending to pre-Lie structures the construction on Lie structures provided in \cite{Makhlouf2}.
 Let us start
by defining a map which is skew-symmetric on the first $n$ variables that will be used to induce the higher order
algebra.

\begin{defi}
 Let $\phi:\otimes^n A \rightarrow A$ be a $n$-linear map and let $\tau$ be a map from $A$
to $\mathbb{K}$.\\ Define $\phi_{\tau}:\otimes^{n+1}A \rightarrow A$ by
\begin{equation}\label{eq:induced}
\phi_{\tau}(x_1,\cdots,x_n,x_{n+1})=\sum_{k=1}^{n}(-1)^{k+1}\tau(x_k)\phi(x_1,\cdots,\widehat{x_k},\cdots,x_n,x_{n+1})
\end{equation}
where the hat over $\widehat{x_k}$ on the right hand side means that $x_k$ is excluded, that is $\phi$
is calculated on $(x_1, \cdots , x_{k-1}, x_{k+1}, \cdots , x_{n+1})$.
\end{defi}

\begin{defi}
Let $\phi:\otimes^n A \rightarrow A$ be a $n$-linear map. A linear map $\tau : A\rightarrow \mathbb{K}$ is called a $\phi$-trace if it 
satisfies for all $x_1,\cdots,x_n\in A$
\begin{equation*}
\tau(\phi(x_1,\cdots,x_n))=0,\forall x_1,\cdots,x_n\in A.
\end{equation*}
\end{defi}

\begin{lem}\label{skewphi}
 Let $\phi:(\wedge^{n-1}A)\otimes A \rightarrow A$  be a n-linear map and $\tau$
a linear map $A \rightarrow \mathbb{K}$. Then $\phi_{\tau}$ is a $(n + 1)$-linear  map such that is skew-symmetric on the first $n$ variables.
Furthermore, if $\tau$ is a $\phi$-trace then $\tau$ is a $\phi_{\tau}$-trace.
\end{lem}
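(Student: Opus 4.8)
The plan is to establish the three assertions in turn, treating $(n+1)$-linearity and the trace property as quick consequences of the defining formula \eqref{eq:induced}, and reserving the real work for the skew-symmetry. The $(n+1)$-linearity of $\phi_{\tau}$ is immediate: in each summand of \eqref{eq:induced} the scalar $\tau(x_k)$ is linear in $x_k$ and the value $\phi(x_1,\cdots,\widehat{x_k},\cdots,x_n,x_{n+1})$ is linear in each surviving argument, so $\phi_{\tau}$ is linear in all $n+1$ slots.

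For skew-symmetry on the first $n$ variables, since adjacent transpositions generate $S_n$ it suffices to show that interchanging $x_i$ and $x_{i+1}$ for $1\le i\le n-1$ reverses the sign of $\phi_{\tau}(x_1,\cdots,x_n,x_{n+1})$. First I would split the defining sum according to whether the extracted index $k$ lies outside $\{i,i+1\}$ or equals one of them. For $k\notin\{i,i+1\}$, both $x_i$ and $x_{i+1}$ survive among the first $n-1$ arguments of $\phi$ — precisely the slots carrying the skew-symmetry of $\phi$ coming from its domain $(\wedge^{n-1}A)\otimes A$ — so interchanging them produces a factor $-1$ in each such term directly. The delicate case is $k\in\{i,i+1\}$, where one of the two interchanged variables is exactly the one pulled out by $\tau$, so the skew-symmetry of $\phi$ cannot be invoked on that term. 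Instead I would pair the $k=i$ term of the interchanged expression with the $k=i+1$ term of the original (and symmetrically): extracting $x_{i+1}$ from the swapped sequence leaves $\phi(x_1,\cdots,\widehat{x_{i+1}},\cdots,x_n,x_{n+1})$, the very same $\phi$-value appearing in the $k=i+1$ term of the unswapped sum, while the accompanying sign is $(-1)^{i+1}$ in the swapped sum but $(-1)^{i+2}$ in the original, differing by exactly $-1$. This crosswise pairing supplies the missing sign and shows every term transforms by $-1$, hence $\phi_{\tau}$ is skew-symmetric on the first $n$ slots.

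The $\phi_{\tau}$-trace property is then a one-line computation. Applying $\tau$ to \eqref{eq:induced} and using that $\tau$ is linear while each $\tau(x_k)$ is a scalar gives
\[
\tau\big(\phi_{\tau}(x_1,\cdots,x_{n+1})\big)=\sum_{k=1}^{n}(-1)^{k+1}\tau(x_k)\,\tau\big(\phi(x_1,\cdots,\widehat{x_k},\cdots,x_n,x_{n+1})\big),
\]
and every factor $\tau(\phi(\cdots))$ vanishes because $\tau$ is a $\phi$-trace; thus the entire expression is zero.

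I expect the sign-accounting in the case $k\in\{i,i+1\}$ to be the only genuine obstacle, as it is the single point where the alternating signs $(-1)^{k+1}$ built into the trace construction must interact with the skew-symmetry already carried by $\phi$, rather than the two mechanisms operating independently.
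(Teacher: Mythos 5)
Your proof is correct, but it establishes the skew-symmetry by a different mechanism than the paper. The paper disposes of that point with a single identity: it rewrites
\[
\phi_{\tau}(x_1,\cdots,x_n,x_{n+1})=\frac{1}{(n-1)!}\sum_{\sigma\in S_n}Sgn(\sigma)\,\tau(x_{\sigma(1)})\,\phi(x_{\sigma(2)},\cdots,x_{\sigma(n)},x_{n+1}),
\]
which is manifestly alternating in $x_1,\cdots,x_n$; this identity holds because the $(n-1)!$ permutations with $\sigma(1)=k$ all contribute the same term $(-1)^{k+1}\tau(x_k)\phi(x_1,\cdots,\widehat{x_k},\cdots,x_n,x_{n+1})$ once the skew-symmetry of $\phi$ in its first $n-1$ slots is used to reorder $x_{\sigma(2)},\cdots,x_{\sigma(n)}$. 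You instead check each adjacent transposition directly, splitting the defining sum into the terms with $k\notin\{i,i+1\}$, where the sign comes from $\phi$ itself, and the two terms with $k\in\{i,i+1\}$, which you pair crosswise so that the sign comes from the alternating prefactor $(-1)^{k+1}$; I verified the sign bookkeeping in that pairing and it is right. The two arguments are logically equivalent --- your crosswise pairing is precisely the computation that makes the paper's antisymmetrized formula collapse back to the defining sum --- but yours is the more elementary and fully self-contained version, while the paper's is shorter at the cost of leaving its displayed identity to the reader. Your treatments of the $(n+1)$-linearity and of the $\phi_{\tau}$-trace property coincide with the paper's.
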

\begin{proof}
 The $(n+1)$-linearity property of $\phi_{\tau}$ follows from $n$-linearity of $\phi$ and linearity
of $\tau$ as it is a linear combination of $(n + 1)$-linear maps. 
To prove the skew-symmetry on the first $n$ variables one simply notes that
\begin{equation*}
  \phi_{\tau}(x_1,\cdots,x_n,x_{n+1})=\frac{1}{(n-1)!}\sum_{\sigma \in S_n}Sgn(\sigma)\tau(x_{\sigma(1)})\phi(x_{\sigma(2)},\cdots,x_{\sigma(n)},x_{n+1}),  
\end{equation*}
which is clearly skew-symmetric on the first $n$ variables. Since each term in $\phi_{\tau}$ is proportional to $\phi$ and
since $\tau$ is linear and a $\phi$-trace, $\tau$ will also be a $\phi_{\tau}$-trace.
\end{proof}
\begin{thm}\label{theindu}
Suppose that $(A,\phi)$ is a $n$-pre-Lie algebra and $\tau:A\rightarrow \mathbb{K}$ is  a $\phi$-trace, then $(A, \phi_{\tau})$  is a $(n + 1)$-pre-Lie algebra. We shall say that $(A,\phi_{\tau})$ is induced by  $(A,\phi)$.
\end{thm}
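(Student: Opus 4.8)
The plan is to verify directly the two defining identities of a $(n+1)$-pre-Lie algebra for the product $\phi_{\tau}$, using the two $n$-pre-Lie identities \eqref{n-pre-Lie 1} and \eqref{n-pre-Lie 2} for $\phi$ together with the trace hypothesis. The correct type is already guaranteed: by Lemma \ref{skewphi}, $\phi_{\tau}$ is $(n+1)$-linear and skew-symmetric in its first $n$ arguments, hence a map $(\wedge^{n}A)\otimes A\rightarrow A$, and moreover $\tau$ is a $\phi_{\tau}$-trace, i.e. $\tau(\phi_{\tau}(z_1,\cdots,z_{n+1}))=0$. This last fact is what lets me discard all unwanted terms.

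First I would record the shape of the induced $(n+1)$-commutator. Writing $[\cdot,\cdots,\cdot]^{C_{\tau}}$ for the bracket attached to $\phi_{\tau}$ by \eqref{eq:ncc} with $n$ replaced by $n+1$, a reindexing of the double sum obtained after substituting the definition \eqref{eq:induced} of $\phi_{\tau}$ shows that
\[
[x_1,\cdots,x_{n+1}]^{C_{\tau}}=\sum_{k=1}^{n+1}(-1)^{k+1}\tau(x_k)\,[x_1,\cdots,\widehat{x_k},\cdots,x_{n+1}]^{C},
\]
so that the commutator of $\phi_{\tau}$ is exactly the $\tau$-induction of the sub-adjacent $n$-Lie bracket $[\cdot,\cdots,\cdot]^{C}$ of $(A,\phi)$. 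Establishing this identity is the first genuine computation; it is a bookkeeping of the signs produced by the two nested skew-symmetrizations. Since $\tau$ is a $\phi$-trace it is also a trace for $[\cdot,\cdots,\cdot]^{C}$ (each summand of \eqref{eq:ncc} is a value of $\phi$), so the right-hand side is the standard trace construction applied to the $n$-Lie algebra $(A,[\cdot,\cdots,\cdot]^{C})$ of Proposition \ref{pro:comm}.

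For the first $(n+1)$-pre-Lie identity I would substitute \eqref{eq:induced} on both sides. The left-hand side $\phi_{\tau}(x_1,\cdots,x_n,\phi_{\tau}(y_1,\cdots,y_{n+1}))$ becomes the double sum $\sum_{k,l}(-1)^{k+l}\tau(x_k)\tau(y_l)\,\phi(x_1,\cdots,\widehat{x_k},\cdots,x_n,\phi(y_1,\cdots,\widehat{y_l},\cdots,y_{n+1}))$, in which the inner argument is a genuine $\phi$-product of $n$ terms and the outer $\phi$ has $n-1$ remaining $x$'s. To each such core term I apply \eqref{n-pre-Lie 1}: this yields $n-1$ terms carrying the $n$-commutator $[x_1,\cdots,\widehat{x_k},\cdots,x_n,y_j]^{C}$ inserted in a $y$-slot, plus one term in which the inner $\phi$ is evaluated on the surviving $x$'s and the last $y$-argument. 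Re-summing over $k$ and $l$, the first group reassembles — using the displayed formula for $[\cdot]^{C_{\tau}}$ — into $\sum_{i=1}^{n}\phi_{\tau}(y_1,\cdots,[x_1,\cdots,x_n,y_i]^{C_{\tau}},\cdots,y_{n+1})$, while the second group reassembles into $\phi_{\tau}(y_1,\cdots,y_n,\phi_{\tau}(x_1,\cdots,x_n,y_{n+1}))$; the terms in which $\tau$ would have to be evaluated on an $n$-commutator drop out because $\tau([\cdot]^{C})=0$. This matches the right-hand side.

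The second identity is handled in the same spirit, substituting \eqref{eq:induced} and applying \eqref{n-pre-Lie 2} to each core term $\phi([x_1,\cdots]^{C},\cdots)$; here the $\phi$-trace is used both to recognise $[\cdot]^{C_{\tau}}$ on the left and to annihilate the spurious terms where $\tau$ lands on a $\phi$-product. I expect the main obstacle to be purely combinatorial: keeping track of the two layers of omitted indices — the index removed by the $\tau$-weighting and the index removed by the commutator or skew-symmetrization — and checking that the accumulated signs $(-1)^{k+1}$, $(-1)^{l+1}$ and $(-1)^{n-i}$ align so that the scattered terms collect precisely into the $\phi_{\tau}$- and $[\cdot]^{C_{\tau}}$-expressions on the right, rather than any conceptual difficulty. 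A cleaner but less self-contained alternative would be to invoke the characterization of $n$-pre-Lie algebras as those structures whose left multiplication represents the sub-adjacent $n$-Lie algebra: once the commutator identity above is known, one is reduced to checking that the left multiplication $L_{\tau}$ of $\phi_{\tau}$ is a representation of $(A,[\cdot]^{C_{\tau}})$, which again amounts to the two trace-weighted identities for $L$.
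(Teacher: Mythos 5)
Your proposal is correct and follows essentially the same route as the paper: substitute the definition \eqref{eq:induced} into both $(n+1)$-pre-Lie identities, apply the $n$-pre-Lie identities \eqref{n-pre-Lie 1} and \eqref{n-pre-Lie 2} to each core $\phi$-term of the resulting double sums, and use the $\phi$-trace property to annihilate the terms where $\tau$ lands on a product, exactly as the paper does with its $B_{kl}$, $C_{lk}$, $D^{(1)}$, $D^{(2)}$ decomposition. Your explicit identity $[x_1,\cdots,x_{n+1}]^{C_{\tau}}=\sum_{k}(-1)^{k+1}\tau(x_k)[x_1,\cdots,\widehat{x_k},\cdots,x_{n+1}]^{C}$ is a correct and useful organizing step that the paper only uses implicitly inside its expansion of $D$.
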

\begin{proof}
Since $\phi_{\tau}$ is skew-symmetric on the first $n$ variables and multilinear by Lemma \ref{skewphi}, one only has to check that
the two identity of $(n+1)$-pre-Lie algebra $(A,\phi_{\tau})$ are fulfilled. The first identity is written as
\begin{align*}
    &\phi_{\tau}\Big(x_1,\cdots,x_n,\phi_{\tau}(y_1,\cdots,y_{n+1})\Big)-\phi_{\tau}\Big(y_1,\cdots,y_n,\phi_{\tau}(x_1,\cdots,x_n,y_{n+1})\Big)\\
    &-\sum_{i=1}^{n}\phi_{\tau}\Big(y_1,\cdots,y_{i-1},[x_1,\cdots,x_n,y_i]^C,y_{i+1},\cdots,y_n,y_{n+1}\Big),
\end{align*}
where $[x_1,\cdots,x_{n+1}]^C=\sum_{i=1}^{n+1}(-1)^{(n+1)-i}\phi_{\tau}(x_1,\cdots,\widehat{x_i},\cdots,x_{n+1},x_i)$.\\
Let us write the left-hand-side of this equation as $B-C-D$. Furthermore, we expand $B$ into terms $B_{kl}$ and $C$ into terms $C_{lk}$ such that
\begin{align*}
    &B_{kl}=(-1)^{k+l}\tau(x_k)\tau(y_l)\phi\Big(x_1,\cdots,\widehat{x_k},\cdots,x_n,\phi(y_1,\cdots,\widehat{y_l},\cdots,y_n,y_{n+1})\Big),\\
    &B=\sum_{k=1}^{n}\sum_{l=1}^{n}B_{kl},\\
    &C_{lk}=(-1)^{l+k}\tau(y_l)\tau(x_k)\phi\Big(y_1,\cdots,\widehat{y_l},\cdots,y_n,\phi(x_1,\cdots,\widehat{x_k},\cdots,x_n,y_{n+1})\Big),\\
    &C=\sum_{l=1}^{n}\sum_{k=1}^{n}C_{lk}.
\end{align*}
Taking into account that $\tau$ is a $\phi-$trace form, we expand $D$ as
\begin{align*}
 &D=\sum_{i=1}^{n}\phi_{\tau}\Big(y_1,\cdots,[x_1,\cdots,x_n,y_i]^C,\cdots,y_n,y_{n+1}\Big)\\
 &=\sum_{i=1}^{n}\phi_{\tau}\Big(y_1,\cdots,\sum_{j=1}^{n}(-1)^{(n+1)-j}\phi_{\tau}(x_1,\cdots,\widehat{x_j},\cdots,x_n,y_i,x_j)+\phi_{\tau}(x_1,\cdots,x_n,y_i),\cdots,y_n,y_{n+1}\Big)\\
 &=\sum_{i=1}^{n}\phi_{\tau}\Big(y_1,\cdots,\phi_{\tau}(x_1,\cdots,x_n,y_i),\cdots,y_n,y_{n+1}\Big)\\
 &+\sum_{i=1}^{n}\sum_{j=1}^{n}(-1)^{(n+1)-j}\phi_{\tau}\Big(y_1,\cdots,\phi_{\tau}(x_1,\cdots,\widehat{x_j},\cdots,x_n,y_i,x_j),\cdots,y_n,y_{n+1}\Big)\\
 &=D^{(1)}+D^{(2)},
\end{align*} where
\begin{align*}
    &D^{(1)}=\sum_{i=1}^{n}\sum_{k=1}^{n}(-1)^{k+1}\tau(x_k)\phi_{\tau}\Big(y_1,\cdots,\phi(x_1,\cdots,\widehat{x_k},\cdots,x_n,y_i),\cdots,y_n,y_{n+1}\Big),\\
    &D^{(2)}=\sum_{i=1}^{n}\sum_{j=1}^{n}(-1)^{(n+1)-j}\sum_{k=1,k\neq j}^{n}(-1)^{k+1}\tau(x_k)\phi_{\tau}\Big(y_1,\cdots,\phi(x_1,\cdots,\widehat{x_k},\cdots,\widehat{x_j},\cdots,x_n,y_i,x_j),\cdots,y_n,y_{n+1}\Big)\\
    &+(-1)^{n+1}\sum_{i=1}^{n}\sum_{j=1}^{n}(-1)^{(n+1)-j}\tau(y_i)\phi_{\tau}\Big(y_1,\cdots,\phi(x_1,\cdots,\widehat{x_j},\cdots,x_n,x_j),\cdots,y_n,y_{n+1}\Big).
\end{align*}
Let us now consider the expansion of $D^{(1)}$ via $D^{(1)}_{kl}$
\begin{align*}
&D^{(1)}_{kl}=\sum_{i=1}^{l-1}(-1)^{k+l}\tau(x_k)\tau(y_l)\phi\Big(y_1,\cdots,\phi(x_1,\cdots,\widehat{x_k},\cdots,x_n,y_i),\cdots,\widehat{y_l},\cdots,y_n,y_{n+1}\Big)\\
&+\sum_{i=l+1}^{n}(-1)^{k+l}\tau(x_k)\tau(y_l)\phi\Big(y_1,\cdots,\widehat{y_l},\cdots,\phi(x_1,\cdots,\widehat{x_k},\cdots,x_n,y_i),\cdots,y_n,y_{n+1}\Big)\\
&D^{(1)}=\sum_{i=1}^{n}\sum_{k=1}^{n}\sum_{l=1}^{n}D^{(1)}_{kl},
\end{align*}
and we expand $D^{(2)}$ as
\begin{align*}
 &D^{(2)}=\sum_{i=1}^{n}\sum_{j=1}^{n}(-1)^{(n+1)-j}\sum_{k=1,k\neq j}^{n}\sum_{l=1}^{n}\\
 &\times\Big(\sum_{i=1}^{l-1}(-1)^{k+l}\tau(x_k)\tau(y_l)\phi\Big(y_1,\cdots,\phi(x_1,\cdots,\widehat{x_k},\cdots,\widehat{x_j},\cdots,x_n,y_i,x_j),\cdots,\widehat{y_l},\cdots,y_n,y_{n+1}\Big)\\
 &+\sum_{i=l+1}^{n}(-1)^{k+l}\tau(x_k)\tau(y_l)\phi\Big(y_1,\cdots,\widehat{y_l},\cdots,\phi(x_1,\cdots,\widehat{x_k},\cdots,\widehat{x_j},\cdots,x_n,y_i,x_j),\cdots,y_n,y_{n+1}\Big)\Big)\\
 &+\sum_{i=1}^{n}\sum_{j=1}^{n}(-1)^{(n+1)-j}\sum_{l=1}^{n}\Big(\sum_{i=1}^{l-1}(-1)^{n+l}\tau(y_i)\tau(y_l)\phi\Big(y_1,\cdots,\phi(x_1,\cdots,\widehat{x_j},\cdots,x_n,x_j),\cdots,\widehat{y_l},\cdots,y_n,y_{n+1}\Big)\\
 &+\sum_{i=l+1}^{n}(-1)^{n+l}\tau(y_i)\tau(y_l)\phi\Big(y_1,\cdots,\widehat{y_l},\cdots,\phi(x_1,\cdots,\widehat{x_j},\cdots,x_n,x_j),\cdots,y_n,y_{n+1}\Big)\Big).
\end{align*}
By a direct simplification and using the two identity of $n$-pre-Lie algebras $(A,\phi)$, we get that the first identity of $(n+1)$-pre-Lie algebras is satisfied for $\phi_{\tau}$. Also, by the same way we can prove that $\phi_{\tau}$ satisfies the second identity of $(n+1)$-pre-Lie algebras.
\end{proof}
In particular, one may derive from a pre-Lie algebra  a 3-pre-Lie algebra using a trace map. 
Recall that  a pre-Lie algebra is a pair $(A,\circ)$, where $A$ is a vector space and $\circ : A\times A\rightarrow A$
is a bilinear multiplication satisfying, for all $x, y, z\in A$,
\begin{equation}\label{eq:pree}
(x\circ y)\circ z-x\circ(y\circ z)=(y\circ x)\circ z-y\circ (x\circ z).
\end{equation}
It is obvious that all associative algebras are pre-Lie
algebras and for a pre-Lie algebra $(A,\circ)$, the commutator $[x,y]^{C}=x\circ y-y\circ x$,
defines a Lie algebra. By Theorem \ref{theindu},
suppose that $(A,\circ)$ is a pre-Lie algebra and $\tau:A\rightarrow \mathbb{K}$ is a linear map, such that $\tau$ is a $\circ$-trace. Then $(A,\{\cdot,\cdot,\cdot\}_{\tau})$ is a $3$-pre-Lie algebra, and we say that it is induced by $(A,\circ)$ where
$\{\cdot,\cdot,\cdot\}_{\tau}:(\wedge^2 A)\otimes A\rightarrow A$ is defined by
\begin{equation}
\{x,y,z\}_{\tau}=\tau(x)y\circ z-\tau(y)x\circ z,\;\forall x,y,z\in A.
\end{equation}

\begin{ex}
Let $(A,\circ )$ be a three-dimensional pre-Lie algebra with a basis
$\{e_1, e_2, e_3\}$ whose nonzero brackets are given as follows:

    $$e_3 \circ e_2 =e_2,\quad e_3 \circ e_3 =- e_3.$$
  We define a trace map as
\begin{equation*}
    \tau(e_1)=a,\;  \tau(e_2)=0,\; \tau(e_3)=0,
\end{equation*}
for any $a \in  \mathbb{K}$. Then, according to Theorem \ref{theindu}, we obtain a three-dimensional $3$-pre-Lie algebra with a basis
$\{e_1, e_2, e_3\}$ whose nonzero brackets are given as follows:
\begin{equation*}
    \{e_1,e_3,e_2\}=a e_2,\quad \{e_1,e_3,e_3\}=-a e_3.
\end{equation*}
\end{ex}
Since $\tau$ is also a $\phi_{\tau}$-trace, one can repeat the procedure in Theorem \ref{theindu} to induce
a $(n + 2)$-pre-Lie algebra from an $n$-pre-Lie algebra. However,
the result is an abelian algebra.
\begin{pro}
Let $\mathcal{A}=(A,\phi)$ be a $n$-pre-Lie algebra. Let $\mathcal{A}^{'}$ 
be any $(n+ 1)$-pre-Lie algebra induced by $\mathcal{A}$ via the $\phi$-trace $\tau$. If
$\mathcal{A}^{''}$ is a $(n + 2)$-pre-Lie algebra induced by $\mathcal{A}^{'}$ using the same $\tau$ again, then $\mathcal{A}^{''}$ is abelian.
\end{pro}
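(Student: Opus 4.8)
The plan is to compute the induced $(n+2)$-linear product $(\phi_{\tau})_{\tau}$ of $\mathcal{A}''$ directly and show that it vanishes identically; once this is established, the sub-adjacent $(n+2)$-Lie bracket of $\mathcal{A}''$, being built from $(\phi_{\tau})_{\tau}$ through \eqref{eq:ncc}, is also zero, so $\mathcal{A}''$ is abelian. By Lemma~\ref{skewphi} the map $\tau$ is a $\phi_{\tau}$-trace, so $\mathcal{A}''$ is well defined. Writing $\psi:=\phi_{\tau}$ for the product of $\mathcal{A}'$, the definition \eqref{eq:induced} applied one level up gives
\[
(\phi_{\tau})_{\tau}(x_1,\cdots,x_{n+2})=\sum_{k=1}^{n+1}(-1)^{k+1}\tau(x_k)\,\psi(x_1,\cdots,\widehat{x_k},\cdots,x_{n+1},x_{n+2}).
\]

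Next I would substitute $\psi=\phi_{\tau}$ into each summand using \eqref{eq:induced} again. Each inner expansion produces a second scalar factor $\tau(x_j)$ with $j\neq k$, and the surviving $\phi$ is evaluated on the list $x_1,\cdots,x_{n+1}$ with both $x_k$ and $x_j$ deleted, followed by $x_{n+2}$ in the last slot. This yields a double sum
\[
(\phi_{\tau})_{\tau}(x_1,\cdots,x_{n+2})=\sum_{k=1}^{n+1}\sum_{\substack{j=1\\ j\neq k}}^{n+1}\varepsilon(k,j)\,\tau(x_k)\tau(x_j)\,\phi(\cdots,\widehat{x_k},\cdots,\widehat{x_j},\cdots,x_{n+1},x_{n+2}),
\]
where $\varepsilon(k,j)$ is the product of the outer sign $(-1)^{k+1}$ and the inner sign coming from the position of $x_j$ after $x_k$ has been removed.

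The crucial and only delicate step is tracking $\varepsilon(k,j)$. After deleting $x_k$, the entry $x_j$ occupies position $j$ when $j<k$ and position $j-1$ when $j>k$, so the inner $(-1)^{m+1}$ of \eqref{eq:induced} combines with $(-1)^{k+1}$ to give $\varepsilon(k,j)=(-1)^{k+j}$ for $j<k$ and $\varepsilon(k,j)=(-1)^{k+j+1}$ for $j>k$. Consequently, for each unordered pair $\{a,b\}$ the two ordered contributions $(k,j)=(a,b)$ and $(k,j)=(b,a)$ carry \emph{opposite} signs, while the remaining data $\tau(x_a)\tau(x_b)\,\phi(\cdots)$ is literally identical for the two (the deletion set $\{a,b\}$ and every other argument coincide). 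Hence the terms cancel pairwise and the entire sum is zero. This is the expected heart of the argument, and it is morally the vanishing of $\tau\wedge\tau$: the symmetry of the scalar factor $\tau(x_k)\tau(x_j)$ is defeated by the antisymmetry of $\varepsilon(k,j)$.

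I therefore expect no serious obstacle beyond this sign bookkeeping. A sanity check at $n=2$ makes the mechanism transparent: there $(\phi_{\tau})_{\tau}(x_1,x_2,x_3,x_4)$ expands into six terms which group into three pairs of the form $\pm\tau(x_a)\tau(x_b)\phi(x_c,x_4)$, each pair summing to zero. Since $(\phi_{\tau})_{\tau}\equiv 0$, the product of $\mathcal{A}''$ vanishes and so does its induced commutator \eqref{eq:ncc}, which is exactly the assertion that $\mathcal{A}''$ is abelian.
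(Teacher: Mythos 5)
Your proof is correct and follows essentially the same route as the paper: expand $(\phi_{\tau})_{\tau}$ into a double sum, observe that for each unordered pair $\{k,j\}$ the two terms proportional to $\tau(x_k)\tau(x_j)$ carry opposite signs (the paper writes this as $(-1)^{k+l}+(-1)^{k+l-1}=0$), and conclude that the product, hence the induced commutator, vanishes. Your sign bookkeeping ($\varepsilon(k,j)=(-1)^{k+j}$ for $j<k$ and $(-1)^{k+j+1}$ for $j>k$) is accurate and in fact spells out the cancellation more carefully than the paper does.
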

\begin{proof}
By the definition of $\phi_{\tau}$ , the bracket on the algebra $\mathcal{A}^{''}$ can be written as
\begin{equation*}
    (\phi_{\tau})_{\tau}=\sum_{k=1}^{k+1}(-1)^{k+1}\tau(x_k)\phi_{\tau}(x_1,\cdots,\widehat{x_k},\cdots,x_{n+1},x_{n+2})
\end{equation*}
Expanding the bracket $\phi_{\tau}$ , there will be, for every choice of integers $k < l$, two
terms which are proportional to $\tau(x_k)\tau(x_l)$. For $l=1,\cdots,n+1$, their sum becomes 
\begin{equation*}
  \tau(x_k)\tau(x_l)\phi( x_1,\cdots,\widehat{x_k},\cdots,\widehat{x_l},\cdots,x_{n+1},x_{n+2})\Big((-1)^{k+l}+(-1)^{k+l-1}\Big)=0.
\end{equation*}
Hence $  (\phi_{\tau})_{\tau}(x_1,\cdots,x_{n+2})=0$, for $x_1,\cdots,x_{n+2}\in A$.
\end{proof}
\begin{rmk}
Note that one might choose a different $\phi_{\tau}$-trace when repeating the construction,
and this can lead to non-abelian algebras.
\end{rmk}
\begin{pro}
Let $(V,l,r)$ be a representation of a $n$-pre-Lie algebra $(A,\{\c,\cdots,\c\})$ and $\tau$ be a
trace map of $A$. Then $(V,l_{\tau},r_{\tau})$ is a representation of the $(n+1)$-pre-Lie algebra $(A,\{\cdot,\cdots,\cdot\}_{\tau})$ where
$l_{\tau}: \wedge^{n} A \rightarrow gl(V)$, $r_{\tau}: \otimes^{n}A \rightarrow gl(V) $ are defined by
\begin{align}
    &l_{\tau}(x_1,\cdots,x_n)=\sum_{k=1}^{n}(-1)^{k+1}\tau(x_k)l(x_1,\cdots,\widehat{x_k},\cdots,x_n),\\
    &r_{\tau}(x_1,\cdots,x_n)=\sum_{k=1}^{n-1}(-1)^{k}\tau(x_k)r(x_1,\cdots,\widehat{x_k},\cdots,x_{n-1},x_n),\quad \forall x_i\in A.
\end{align}
\end{pro}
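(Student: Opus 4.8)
The plan is to avoid verifying the four defining identities of Definition \ref{defrep} for $(l_\tau, r_\tau)$ head-on, and instead reduce everything to the two construction results already in place: the semi-direct product characterization of representations (Proposition \ref{carpre}) and the trace construction that raises the arity (Theorem \ref{theindu}). The guiding idea is that the operation $\phi\mapsto\phi_\tau$, performed on the \emph{whole} semi-direct product, should intertwine the semi-direct product structure for $(l,r)$ with the one for $(l_\tau,r_\tau)$.

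First I would form, via Proposition \ref{carpre}, the semi-direct product $n$-pre-Lie algebra $A\ltimes_{l,r}V$ on $A\oplus V$, and denote its multiplication by $\Phi=\{\cdot,\cdots,\cdot\}_{A\oplus V}$. Next I would extend $\tau$ to a linear form $\widetilde{\tau}$ on $A\oplus V$ by $\widetilde{\tau}(x+u)=\tau(x)$, so that $\widetilde{\tau}$ vanishes on $V$. Since the $A$-component of $\Phi(z_1,\cdots,z_n)$ is exactly $\{x_1,\cdots,x_n\}$, and $\tau$ is a $\{\cdot,\cdots,\cdot\}$-trace while $\widetilde{\tau}$ kills $V$, one gets $\widetilde{\tau}(\Phi(z_1,\cdots,z_n))=\tau(\{x_1,\cdots,x_n\})=0$; hence $\widetilde{\tau}$ is a $\Phi$-trace. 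Applying Theorem \ref{theindu} then produces an $(n+1)$-pre-Lie algebra $(A\oplus V,\Phi_{\widetilde{\tau}})$.

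The heart of the argument is to compute $\Phi_{\widetilde{\tau}}$ explicitly and recognize it as the semi-direct product of $(A,\{\cdot,\cdots,\cdot\}_\tau)$ with $(V,l_\tau,r_\tau)$. Because $\widetilde{\tau}$ only sees $A$-components, the $A$-part of $\Phi_{\widetilde{\tau}}(z_1,\cdots,z_{n+1})$ collapses to $\{x_1,\cdots,x_{n+1}\}_\tau$ straight from the definition of the induced bracket, and the coefficient of $u_{n+1}$ reassembles into $l_\tau(x_1,\cdots,x_n)(u_{n+1})$ directly from the definition of $l_\tau$. The delicate part is the coefficient of each $u_i$ with $i\le n$: in the sum $\sum_{k=1}^n(-1)^{k+1}\tau(x_k)\,\Phi(z_1,\cdots,\widehat{z_k},\cdots,z_n,z_{n+1})$ only the terms with $k\ne i$ contribute $u_i$, and the position of $z_i$ inside the shortened argument list is $i$ when $i<k$ and $i-1$ when $k<i$, producing two sign regimes. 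I expect the main obstacle to be precisely this index and sign bookkeeping: one must check that, after accounting for the shift caused by deleting $x_k$, the outer factor $(-1)^{k+1}$ combined with the internal $r$-sign agrees in both regimes with the sign appearing in $(-1)^{i+1}r_\tau(x_1,\cdots,\widehat{x_i},\cdots,x_{n+1})(u_i)$. A short parity count shows the regime $k<i$ matches the summand $m=k$ and the regime $k>i$ matches the summand $m=k-1$ in the defining sum $r_\tau(y_1,\cdots,y_n)=\sum_{m=1}^{n-1}(-1)^m\tau(y_m)r(y_1,\cdots,\widehat{y_m},\cdots,y_{n-1},y_n)$, with identical $r$-arguments; so the $r_\tau$-part is reproduced exactly.

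Finally, having identified $(A\oplus V,\Phi_{\widetilde{\tau}})$ with the semi-direct product formula attached to $(l_\tau,r_\tau)$, and knowing by Theorem \ref{theindu} that it is an $(n+1)$-pre-Lie algebra, I would invoke the converse direction of Proposition \ref{carpre}, applied with $n+1$ in place of $n$, to conclude that $(V,l_\tau,r_\tau)$ is a representation of the $(n+1)$-pre-Lie algebra $(A,\{\cdot,\cdots,\cdot\}_\tau)$, which is the assertion.
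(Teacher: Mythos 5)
Your proposal is correct and follows essentially the same route as the paper: the paper likewise extends $\tau$ to $\overline{\tau}(x+u)=\tau(x)$ on the semi-direct product $A\ltimes_{l,r}V$, applies the trace-induced construction to obtain an $(n+1)$-pre-Lie structure on $A\oplus V$, expands it to recognize the semi-direct product formula for $(l_\tau,r_\tau)$, and concludes via the equivalence between semi-direct products and representations. Your sign and index bookkeeping for the $u_i$-coefficients checks out and is in fact spelled out more carefully than in the paper's own computation.
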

\begin{proof}
Let $\tau$ be a trace map of $(A,\{\c,\cdots,\c\} )$ and $(V,l,r)$ be a representation of $A$. We define $\overline{\tau}:A\oplus V \rightarrow \mathbb{K}$ by
\begin{equation*}
   \overline{\tau}(x+u)=\tau(x),\; \forall x\in A,\;\forall u\in V.
\end{equation*}
Then $\overline{\tau}$ is a trace map on the semidirect product $n$-pre-Lie algebra $A \ltimes_{(l,r)} V $. Then by \eqref{eq:induced} , there is a $(n+1)$-pre-Lie algebra structure $\{\cdot,\cdots,\cdot\}_{\overline{\tau}}$ on the vector space $A\oplus V $ given by
\begin{align*}
   &\{x_1+u_1,\cdots,x_{n+1}+u_{n+1}\}_{\overline{\tau}}=\sum_{k=1}^{n}(-1)^{k+1}\overline{\tau}(x_k+u_k)\{x_1+u_1,\cdots,\widehat{x_k+u_k},\cdots,x_n+u_n,x_{n+1}+u_{n+1}\}_{A\oplus V}\\
   &=\sum_{k=1}^{n}(-1)^{k+1}\tau(x_k)\Big(\{x_1,\cdots,\widehat{x_k},\cdots,x_n,x_{n+1}\}+l(x_1,\cdots,\widehat{x_k},\cdots, x_n)u_{n+1}\\
   &+\sum_{1\leq k < j \leq n}(-1)^{j}r(x_1,\cdots,\widehat{x_k},\cdots,\widehat{x_j},\cdots,x_n,x_{n+1})u_j+\sum_{1\leq j < k \leq n}(-1)^{j+1}r(x_1,\cdots,\widehat{x_j},\cdots,\widehat{x_k},\cdots,x_n,x_{n+1})u_j\\
   &=\{x_1,\cdots,x_{n+1}\}_{\tau}+l_{\tau}(x_1,\cdots,x_n)u_{n+1}+\sum_{i=1}^{n}(-1)^{i+1}r_{\tau}(x_1,\cdots,\widehat{x_i},\cdots,x_n,x_{n+1})(u_i).
\end{align*}
Since a semidirect product of $(n+1)$-pre-Lie algebras is equivalent to  representations of a $(n+1)$-pre-Lie algebras.
Thus, we deduce that $(V,l_{\tau},r_{\tau})$ is a representation of the $(n+1)$-pre-Lie algebra $(A,\{\cdot,\cdots,\cdot\}_{\tau})$.
\end{proof}
\begin{lem}
Let $(A,\{\c,\cdots,\c\} )$ be a $n$-pre-Lie algebra and  $\tau$ be a trace map. Let $D:A \rightarrow A$ be a $n$-pre-Lie derivation, then $\tau \circ D$ is a trace map.
\end{lem}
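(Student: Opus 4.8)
The plan is to verify directly that $\tau\circ D$ satisfies the defining identity of a $\phi$-trace, namely that it annihilates every product. Since $\tau\circ D$ is a composition of two linear maps it is automatically linear, so the only thing requiring proof is that $(\tau\circ D)(\{x_1,\cdots,x_n\})=0$ for all $x_1,\cdots,x_n\in A$.

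First I would recall the defining property of an $n$-pre-Lie derivation $D$, namely the Leibniz rule
\[
D(\{x_1,\cdots,x_n\})=\sum_{i=1}^{n}\{x_1,\cdots,x_{i-1},D(x_i),x_{i+1},\cdots,x_n\}.
\]
Applying $\tau$ to both sides and using the linearity of $\tau$ to move it inside the sum yields
\[
(\tau\circ D)(\{x_1,\cdots,x_n\})=\sum_{i=1}^{n}\tau\big(\{x_1,\cdots,D(x_i),\cdots,x_n\}\big).
\]

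Now the key observation is that each summand on the right is $\tau$ evaluated on a genuine $n$-pre-Lie product: since $D$ maps $A$ into $A$, the element $D(x_i)$ still lies in $A$, so $\{x_1,\cdots,D(x_i),\cdots,x_n\}$ is produced by the multiplication $\{\cdot,\cdots,\cdot\}$ applied to arguments in $A$. Because $\tau$ is a $\phi$-trace, every such term vanishes, and hence the whole sum is zero. This gives $(\tau\circ D)(\{x_1,\cdots,x_n\})=0$, so $\tau\circ D$ is again a trace map.

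There is no real obstacle here; the statement follows from a one-line computation once the Leibniz rule is written out. The only point deserving care is to ensure that the derivation rule distributes over all $n$ slots and that $D(x_i)\in A$, so that the hypothesis on $\tau$ can be applied to each term individually; no cancellation among terms and no appeal to the two $n$-pre-Lie axioms is needed.
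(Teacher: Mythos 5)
Your proof is correct and follows exactly the paper's argument: apply the Leibniz rule for the derivation $D$, use linearity of $\tau$ to distribute it over the sum, and observe that each summand vanishes because $\tau$ is a trace. Nothing further is needed.
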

\begin{proof}
For any $x_i\in A$, we have
\begin{equation*}
    \tau(D(\{x_1,\cdots,x_n\}))=\tau\Big(\sum_{i=1}^{n}\{x_1,\cdots,D(x_i),\cdots,x_n\}\Big)=\sum_{i=1}^{n}\tau(\{x_1,\cdots,D(x_i),\cdots,x_n\})=0.
\end{equation*}
\end{proof}
\begin{thm}
Let $D:A \rightarrow A$ be a derivation of a $n$-pre-Lie algebra $A$, then $D$ is a derivation
of the induced $(n+1)$-pre-Lie algebra if and only if:
\begin{equation*}
    \{x_1,\cdots,x_{n+1}\}_{\tau \circ D}=0,\quad \forall x_i\in A.
\end{equation*}
\end{thm}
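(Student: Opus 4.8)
The plan is to expand both sides of the derivation identity for $\phi_\tau$ and to isolate their difference, which I claim is exactly the $(n+1)$-bracket built from the functional $\tau\circ D$. By definition, $D$ is a derivation of the induced algebra $(A,\phi_\tau)$ precisely when
\[
D\big(\phi_\tau(x_1,\cdots,x_{n+1})\big)=\sum_{i=1}^{n+1}\phi_\tau(x_1,\cdots,D(x_i),\cdots,x_{n+1}),\qquad\forall\,x_i\in A.
\]
First I would expand the left-hand side. Since $\tau(x_k)$ is a scalar it passes through $D$, and because $D$ is a derivation of the original product $\phi$, each factor $D\big(\phi(x_1,\cdots,\widehat{x_k},\cdots,x_n,x_{n+1})\big)$ splits as a sum over the $n$ arguments of that $\phi$, namely the $x_j$ with $j\neq k$ (for $j\le n$) together with the distinguished last slot $x_{n+1}$.

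Next I would expand the right-hand side slot by slot. The term $i=n+1$ is $\phi_\tau(x_1,\cdots,x_n,D(x_{n+1}))$, which simply carries $D(x_{n+1})$ into the last argument of every $\phi$. For each $i\le n$, replacing $x_i$ by $D(x_i)$ inside $\phi_\tau$ yields two types of contributions according to whether the outer summation index $k$ of $\phi_\tau$ equals $i$ or not: when $k=i$ the coefficient becomes $\tau(D(x_i))$ and the argument $x_i$ is omitted from $\phi$; when $k\neq i$ the coefficient is still $\tau(x_k)$ and $D(x_i)$ remains inside $\phi$.

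The key observation is that these contributions match the left-hand side termwise. After relabelling the summation indices, the off-diagonal part ($k\neq i$) of the right-hand side coincides with the left-hand side terms in which $D$ has landed on an inner argument $x_j$ with $j\neq k$, and the $i=n+1$ term on the right matches the left-hand side term in which $D$ has landed on $x_{n+1}$. Everything therefore cancels except the diagonal contributions $\sum_{i=1}^{n}(-1)^{i+1}\tau(D(x_i))\,\phi(x_1,\cdots,\widehat{x_i},\cdots,x_n,x_{n+1})$, which is precisely $\{x_1,\cdots,x_{n+1}\}_{\tau\circ D}$ by Eq.~\eqref{eq:induced} applied to the linear functional $\tau\circ D$ (well defined as an induced product, since $\tau\circ D$ is again a trace by the preceding Lemma). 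This gives
\[
\sum_{i=1}^{n+1}\phi_\tau(x_1,\cdots,D(x_i),\cdots,x_{n+1})-D\big(\phi_\tau(x_1,\cdots,x_{n+1})\big)=\{x_1,\cdots,x_{n+1}\}_{\tau\circ D},
\]
from which the stated equivalence is immediate: $D$ is a derivation of $(A,\phi_\tau)$ if and only if the right-hand side vanishes for all $x_i\in A$.

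The only delicate point is the sign and omitted-index bookkeeping in the double sums, but this is routine. The structural reason the argument works so cleanly is that $x_{n+1}$ occupies a privileged slot: it never carries the factor $\tau$ and is never omitted from $\phi$, so it is unaffected by the $\phi_\tau$-antisymmetrization. This makes the pairing of left- and right-hand contributions transparent and ensures that exactly the $\tau\circ D$ terms survive.
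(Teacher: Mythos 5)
Your proposal is correct and follows essentially the same route as the paper: both expand the derivation identity for $\phi_\tau$ using the derivation property of $D$ on $\phi$, match the off-diagonal and last-slot terms, and identify the surviving diagonal contribution $\sum_{i=1}^{n}(-1)^{i+1}\tau(D(x_i))\phi(x_1,\cdots,\widehat{x_i},\cdots,x_n,x_{n+1})$ as $\{x_1,\cdots,x_{n+1}\}_{\tau\circ D}$. The resulting identity $\sum_{i=1}^{n+1}\phi_\tau(x_1,\cdots,D(x_i),\cdots,x_{n+1})-D(\phi_\tau(x_1,\cdots,x_{n+1}))=\{x_1,\cdots,x_{n+1}\}_{\tau\circ D}$ is exactly the one the paper derives, so the equivalence follows in the same way.
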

\begin{proof}
Let $D$ be a derivation of $A$ and $x,y,z\in A$, we obtain
\begin{align*}
  &D(\{x_1,\cdots,x_{n+1}\}_{\tau})= D(\sum_{k=1}^{n}(-1)^{k+1}\tau(x_k)\{x_1,\cdots,\widehat{x_k},\cdots,x_n,x_{n+1}\})\\
  &\quad \quad \quad \quad \quad \quad \quad \;\; =\sum_{k=1}^{n}(-1)^{k+1}\tau(x_k)\Big(\sum_{i=1,i\neq k}^{n+1}\{x_1,\cdots,\widehat{x_k},\cdots,Dx_i,\cdots,x_n,x_{n+1}\}\Big)\\
  &\quad \quad \quad \quad \quad \quad \;\; \quad+\sum_{k=1}^{n}(-1)^{k+1}(\tau\circ D)(x_k)\{x_1,\cdots,\widehat{x_k},\cdots,x_n,x_{n+1}\}-\{x_1,\cdots,x_{n+1}\}_{\tau \circ D}\\
  &\quad \quad \quad \quad \quad \quad \;\; \quad=\sum_{k=1}^{n+1}\{x_1,\cdots,Dx_k,\cdots,x_{n+1}\}_\tau-\{x_1,\cdots,x_{n+1}\}_{\tau \circ D}.
\end{align*}
\end{proof}

\begin{rmk}This construction fits also with sub-adjacent $n$-Lie algebras. 
The sub-adjacent $(n+1)$-Lie algebra of the induced $(n+1)$-pre-Lie algebra is the induced $(n+1)$-Lie algebra of the sub-adjacent $n$-Lie algebra of the $n$-pre-Lie algebra with respect to the same trace map.
\end{rmk}

\section{$n$-L-dendriform algebras}

In this section, we introduce the notion of a $n$-L-dendriform algebra which is exactly the $n$-ary version
of a L-dendriform algebra. We provide some construction results in terms of $\mathcal{O}$-operator and symplectic
structure.
\begin{defi}
Let $A$ be a vector space with two $n$-linear maps $\nwarrow: (\wedge^{n-1} A)\otimes A \rightarrow A$ and $\nearrow:A\otimes (\wedge^{n-2}A)\otimes A \rightarrow A$. The tuple $(A,\nwarrow,\nearrow)$ is called a
$n$-L-dendriform algebra if the following identities hold
\begin{align}
 \label{3-L-dendriform1}&   \nwarrow(x_1,\cdots,x_{n-1},\nwarrow(y_1,\cdots,y_{n}))-  \nwarrow(y_1,\cdots,y_{n-1},\nwarrow(x_1,\cdots,x_{n-1},y_{n})) \nonumber\\
&\hspace{2 cm} =\sum_{i=1}^{n-1}\nwarrow(y_1,\cdots,y_{i-1},[x_1,\cdots,x_{n-1},y_i]^C,y_{i+1},\cdots,y_{n})      ,\\
 &  \label{3-L-dendriform2}\nwarrow([x_1,\cdots,x_n]^C,y_{1},\cdots,y_{n-1})=\sum_{i=1}^{n}(-1)^{n-i}\nwarrow(x_1,\cdots,\widehat{x_i},\cdots,x_n,\nwarrow(x_i,y_{1},\cdots,y_{n-1})),
   \\
 &\label{3-L-dendriform3}  \nwarrow(x_1,\cdots,x_{n-1}, \nearrow(y_{n},y_1,\cdots,y_{n-1}))- \nearrow(y_{n},y_1,\cdots,y_{n-2},\{x_1,\cdots,x_{n-1},y_{n-1}\}^h) \nonumber\\
& \hspace{2 cm} =\nearrow(\{x_1,\cdots,x_{n-1},y_{n}\}^v,y_1,\cdots,y_{n-1}) \nonumber\\ &\hspace{2 cm}+\sum_{i=1}^{n-2}\nearrow(y_{n},y_1,\cdots,y_{i-1},[x_1,\cdots,x_{n-1},y_i]^C,y_{i+1},\cdots,y_{n-1}), \\
 &   \label{3-L-dendriform4}     \nearrow(y_{n-1},[x_1,\cdots,x_n]^C,y_{1},\cdots,y_{n-2})\nonumber\\
 &\hspace{2 cm}=\sum_{i=1}^{n}(-1)^{n-i}\nwarrow(x_1,\cdots,\widehat{x_i},\cdots,x_n, \nearrow(y_{n-1},x_i,y_{1},\cdots,y_{n-2})) ,\\
 & \label{3-L-dendriform5} \nearrow(x_{n-1},x_1,\cdots,x_{n-2},\{y_{1},\cdots,y_{n}\}^h)-\nwarrow(y_{1},\cdots,y_{n-1}, \nearrow(x_{n-1},x_1,\cdots,x_{n-2},y_{n}))
 \nonumber\\
&\hspace{2 cm}=\sum_{i=1}^{n-1}(-1)^{i+1} \nearrow(\{x_1,\cdots,x_{n-2},y_i,x_{n-1}\}^v,y_{1},\cdots,\widehat{y_i},\cdots,y_{n})
      ,\\
 &   \label{3-L-dendriform6} \nearrow(\{x_1,\cdots,x_{n-1},y_{n}\}^v,y_1,\cdots,y_{n-1})-  \nwarrow(x_1,\cdots,x_{n-1}, \nearrow(y_{n},y_1,\cdots,y_{n-1}))\nonumber\\
&\hspace{2 cm} =\sum_{i=1}^{n-1}(-1)^{i} \nearrow(y_{n},x_1,\cdots,\widehat{x_i},\cdots,x_{n-1},\{x_i,y_1,\cdots,y_{n-1}\}^h)     ,
\end{align}
 for all $x_i,y_i \in A$, $1\leq i \leq n$, where
\begin{align}
& \{x_1,\cdots,x_n\}^h=\nwarrow(x_1,\cdots,x_n)+ \sum_{i=1}^{n-1}(-1)^{i+1}\nearrow(x_i,x_1,\cdots,\widehat{x_i},\cdots,x_n)  ,   \label{accolade horizintal}\\
& \{x_1,\cdots,x_n\}^v=\nwarrow(x_1,\cdots,x_n)+ \sum_{i=1}^{n-1}(-1)^{i}\nearrow(x_n,x_1,\cdots,\widehat{x_i},\cdots,x_{n-1},x_i)  ,   \label{accolade vertical}\\
& [x_1,\cdots,x_n]^C=\sum_{i=1}^{n}(-1)^{n-i}\{x_1,\cdots,\widehat{x_i},\cdots,x_n,x_i\}^h=\sum_{i=1}^{n}(-1)^{n-i}\{x_1,\cdots,\widehat{x_i},\cdots,x_n,x_i\}^v, \label{crochet}
\end{align}
for any $x_i \in A$, $1\leq i \leq n$.
\end{defi}

\begin{rmk}
Let $(A,\nwarrow, \nearrow)$ be a $n$-L-dendriform algebra.  If $ \nearrow=0$, then $(A,\nwarrow)$ is a $n$-pre-Lie algebra.
\end{rmk}

\begin{pro}\label{3LDendTo3PreLie}
Let $(A,\nwarrow, \nearrow)$ be a $n$-L-dendriform algebra.
\begin{enumerate}
  \item  The bracket given in \eqref{accolade horizintal} defines a $n$-pre-Lie algebra structure on $A$ which is called the associated  horizontal  $n$-pre-Lie algebra of  $(A,\nwarrow, \nearrow)$  and $(A,\nwarrow, \nearrow)$ is also called  a compatible $n$-L-dendriform algebra structure  on the  $n$-pre-Lie algebra $(A,\{\c,\cdots,\c\}^h)$.
  \item  The bracket given in \eqref{accolade vertical}  defines a $n$-pre-Lie algebra structure on $A$ which is called the associated vertical  $n$-pre-Lie algebra of  $(A,\nwarrow, \nearrow)$  and $(A,\nwarrow, \nearrow)$ is also called  a compatible $n$-L-dendriform algebra structure  on the  $n$-pre-Lie algebra $(A,\{\c,\cdots,\c\}^v)$.
\end{enumerate}
\end{pro}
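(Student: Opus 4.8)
The plan is to prove each of the two statements by checking directly that the corresponding bracket satisfies the two defining identities \eqref{n-pre-Lie 1} and \eqref{n-pre-Lie 2} of an $n$-pre-Lie algebra. I would treat the horizontal bracket $\{\cdot,\cdots,\cdot\}^h$ in full detail and then carry out the vertical case by the same method, with the roles of the first and last arguments of $\nearrow$ interchanged. A preliminary observation that keeps everything consistent is \eqref{crochet}: the induced commutator $[\cdot,\cdots,\cdot]^C$ is the same whether it is computed from $\{\cdot,\cdots,\cdot\}^h$ or from $\{\cdot,\cdots,\cdot\}^v$, so the $[\cdot,\cdots,\cdot]^C$ appearing inside the dendriform axioms \eqref{3-L-dendriform1}--\eqref{3-L-dendriform6} is exactly the commutator that must be fed into \eqref{n-pre-Lie 1} and \eqref{n-pre-Lie 2}. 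The whole proof is then a substitution-and-sorting argument: replace every occurrence of $\{\cdot,\cdots,\cdot\}^h$ (and, where it occurs, $\{\cdot,\cdots,\cdot\}^v$) by its expansion \eqref{accolade horizintal} (resp.\ \eqref{accolade vertical}) into $\nwarrow$- and $\nearrow$-terms, and reorganize the resulting expression so that each group of terms is recognized as an instance of one of the six axioms.

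For the first identity \eqref{n-pre-Lie 1} applied to $\{\cdot,\cdots,\cdot\}^h$, after expansion the terms separate according to the outermost operation and the position of the composite argument. The contributions in which every operation is $\nwarrow$ collect into precisely the content of \eqref{3-L-dendriform1}, with the inner composite $\{y_1,\cdots,y_n\}^h$ sitting in the last slot, so that part closes immediately. The remaining terms each carry at least one $\nearrow$; those of the form $\nwarrow(x_1,\cdots,x_{n-1},\nearrow(\cdots))$ and those with an outer $\nearrow$ are matched by instantiating \eqref{3-L-dendriform3} and \eqref{3-L-dendriform6} with suitably permuted inputs, using the skew-symmetry of the middle $n-2$ arguments of $\nearrow$ to line up the slots and to produce the required signs. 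Here one exploits the fact that \eqref{3-L-dendriform3} and \eqref{3-L-dendriform6} share the terms $\nwarrow(x_1,\cdots,x_{n-1},\nearrow(y_n,y_1,\cdots,y_{n-1}))$ and $\nearrow(\{x_1,\cdots,x_{n-1},y_n\}^v,y_1,\cdots,y_{n-1})$, which is what allows the vertical brackets appearing in these axioms to be eliminated.

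For the second identity \eqref{n-pre-Lie 2}, the same expansion of $\{[x_1,\cdots,x_n]^C,y_1,\cdots,y_{n-1}\}^h$ yields a pure-$\nwarrow$ piece, which is exactly \eqref{3-L-dendriform2}, together with $\nearrow$-pieces distinguished by where the commutator $[x_1,\cdots,x_n]^C$ lands. When $[x_1,\cdots,x_n]^C$ occupies the first slot of an $\nearrow$, I would rewrite it through the vertical expression in \eqref{crochet}, namely $[x_1,\cdots,x_n]^C=\sum_{i=1}^{n}(-1)^{n-i}\{x_1,\cdots,\widehat{x_i},\cdots,x_n,x_i\}^v$, so that each summand matches the right-hand side of \eqref{3-L-dendriform5}; when it sits in one of the middle slots, skew-symmetry of those slots reduces it to the shape of \eqref{3-L-dendriform4}. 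Summing these instances against the expansion of the right-hand side of \eqref{n-pre-Lie 2} then gives the identity.

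The main obstacle is bookkeeping rather than conceptual: one must track the signs $(-1)^{i+1}$ and $(-1)^{i}$ coming from \eqref{accolade horizintal} and \eqref{accolade vertical}, the sign incurred when a composite element is inserted into a skew-symmetrized slot, and---most delicately---the reindexing that appears when two arguments are simultaneously omitted, which forces the split into $\sum_{1\le i<j\le n}$ and $\sum_{1\le j<i\le n}$ exactly as in the computation \eqref{brac} of Proposition \ref{teald}. A second subtlety is that the mixed axioms \eqref{3-L-dendriform3}, \eqref{3-L-dendriform5} and \eqref{3-L-dendriform6} involve both brackets at once, so the horizontal and vertical structures cannot be handled in complete isolation; the shared terms noted above, together with the common commutator from \eqref{crochet}, are what let the $\{\cdot,\cdots,\cdot\}^v$ contributions cancel and the verification close. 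Once the terms are sorted correctly, each of the six axioms is used exactly as needed and no residual terms remain.
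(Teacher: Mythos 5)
Your overall strategy is the same as the paper's: expand every occurrence of $\{\cdot,\cdots,\cdot\}^h$ (and $\{\cdot,\cdots,\cdot\}^v$) into $\nwarrow$- and $\nearrow$-terms, sort the resulting terms by shape, and recognize each block as an instance of one of the six axioms, using \eqref{crochet} to pass between the two commutator expressions. However, your assignment of axioms to blocks swaps the roles of \eqref{3-L-dendriform5} and \eqref{3-L-dendriform6}, and this is not a cosmetic slip: followed literally, the matching step fails. In the expansion of \eqref{n-pre-Lie 1} for $\{\cdot,\cdots,\cdot\}^h$, besides the pure-$\nwarrow$ block (closed by \eqref{3-L-dendriform1}) and the block with some $y_j$, $j\leq n-1$, in the first slot of $\nearrow$ (closed by \eqref{3-L-dendriform3}), there is a third block consisting of the terms $\nearrow(x_k,x_1,\cdots,\widehat{x_k},\cdots,x_{n-1},\{y_1,\cdots,y_n\}^h)$ and $\nwarrow(y_1,\cdots,y_{n-1},\nearrow(x_k,x_1,\cdots,\widehat{x_k},\cdots,x_{n-1},y_n))$. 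These have exactly the shape of the left-hand side of \eqref{3-L-dendriform5}; they cannot be matched by \eqref{3-L-dendriform6}, whose terms place $y_n$ in the first slot of $\nearrow$ and an $h$-bracket mixing one $x$ with the $y$'s in the last slot --- shapes that never occur in this expansion. Likewise, the vertical brackets are not eliminated by playing \eqref{3-L-dendriform3} against \eqref{3-L-dendriform6} through their shared terms: the leftover $\nearrow(\{x_1,\cdots,x_{n-1},y_j\}^v,\cdots)$ produced by \eqref{3-L-dendriform3} cancels against the first-slot expansion of $\nearrow([x_1,\cdots,x_{n-1},y_j]^C,y_1,\cdots,\widehat{y_j},\cdots,y_n)$ via the vertical form of \eqref{crochet}, and the remaining terms $\nearrow(\{x_1,\cdots,\widehat{x_k},\cdots,x_{n-1},y_j,x_k\}^v,\cdots)$ are precisely the right-hand side of \eqref{3-L-dendriform5}. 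The paper's proof accordingly closes \eqref{n-pre-Lie 1} with \eqref{3-L-dendriform1}, \eqref{3-L-dendriform3} and \eqref{3-L-dendriform5}.

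Dually, for \eqref{n-pre-Lie 2} the term $\nearrow([x_1,\cdots,x_n]^C,y_1,\cdots,y_{n-1})$, once the commutator is expanded by the vertical form of \eqref{crochet}, produces summands $\nearrow(\{x_1,\cdots,\widehat{x_i},\cdots,x_n,x_i\}^v,y_1,\cdots,y_{n-1})$, which match the left-hand side of \eqref{3-L-dendriform6}, not the right-hand side of \eqref{3-L-dendriform5} (whose trailing arguments are $y_1,\cdots,\widehat{y_i},\cdots,y_n$ and which is a sum over $i$ rather than a single term). So the correct pairing is: \eqref{3-L-dendriform1}, \eqref{3-L-dendriform3}, \eqref{3-L-dendriform5} for \eqref{n-pre-Lie 1}, and \eqref{3-L-dendriform2}, \eqref{3-L-dendriform4}, \eqref{3-L-dendriform6} for \eqref{n-pre-Lie 2}. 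With that interchange your plan becomes the paper's proof. One further small omission: you should record at the outset that $\{\cdot,\cdots,\cdot\}^h$ and $\{\cdot,\cdots,\cdot\}^v$ are skew-symmetric in their first $n-1$ arguments (from the skew-symmetry of $\nwarrow$ and of the middle slots of $\nearrow$), which is part of being an $n$-pre-Lie structure and is the first thing the paper checks.
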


\begin{proof}
We will just prove item 1.
Note, first that  by the skew-symmetry of the first $n- 1$ variables of $\nwarrow$ and since $\nearrow(x_1,x_{\sigma(2)},\cdots,x_{\sigma(n-1)},x_n)=Sgn(\sigma)\nearrow(x_1,x_2,\cdots,x_{n-1},x_n),\; \forall \sigma \in S_{n}$, with  $\sigma(1)=1$ and $\sigma(n)=n$, the induced $n$-bracket $\{\c,\cdots,\c\}^h$ given by Eq.
\eqref{accolade horizintal} is skew-symmetric on the first $n-1$ variables.  \\
Let $x_i,y_i \in A,\ 1 \leq i \leq n$. Then
\begin{align*}
&\{x_1,\cdots,x_{n-1},\{y_1,\cdots,y_{n}\}^h\}^h- \{y_1,\cdots,y_{n-1},\{x_1,\cdots,x_{n-1},y_{n}\}^h\}^h\\
&-\sum_{i=1}^{n-1}\{y_1,\cdots,y_{i-1},[x_1,\cdots,x_{n-1},y_i]^C,y_{i+1},\cdots,y_{n}\}^h\\
&=\nwarrow(x_1,\cdots,x_{n-1},\nwarrow(y_1,\cdots,y_n)+\sum_{i=1}^{n-1}(-1)^{i+1}\nearrow(y_i,y_1,\cdots,\widehat{y_i},\cdots,y_{n-1},y_n))\\
&+\sum_{i=1}^{n-1}(-1)^{i+1}\nearrow(x_i,x_1,\cdots,\widehat{x_i},\cdots,x_{n-1},\{y_1,\cdots,y_n\}^h)\\
&-\nwarrow(y_1,\cdots,y_{n-1},\nwarrow(x_1,\cdots,x_{n-1},y_n)+\sum_{i=1}^{n-1}(-1)^{i+1}\nearrow(x_i,x_1,\cdots,\widehat{x_i},\cdots,x_{n-1},y_n))\\
&-\sum_{i=1}^{n-1}(-1)^{i+1}\nearrow(y_i,y_1,\cdots,\widehat{y_i},\cdots,y_{n-1},\{x_1,\cdots,x_{n-1},y_n\}^h)\\
&-\sum_{i=1}^{n-1}\Big(\nwarrow(y_1,\cdots,y_{i-1},[x_1,\cdots,x_{n-1},y_i]^C,y_{i+1},\cdots,y_{n-1},y_n)\\
&+\sum_{j=1,j\neq i}^{n-1}(-1)^{j+1}\nearrow(y_j,y_1,\cdots,\widehat{y_j},\cdots,y_{i-1},[x_1,\cdots,x_{n-1},y_i]^C,y_{i+1},\cdots,y_{n-1},y_n)\Big)\\
&-\sum_{i=1}^{n-1}(-1)^{i+1}\nearrow(\sum_{k=1}^{n-1}\{x_1,\cdots,\widehat{x_k},\cdots,x_{n-1},y_i,x_k\}^v+\{x_1,\cdots,x_{n-1},y_i\}^v,y_1,\cdots,\widehat{y_i},\cdots,y_{n-1},y_n).
\end{align*}
From identities \eqref{3-L-dendriform1},\eqref{3-L-dendriform3} and \eqref{3-L-dendriform5}, we obtain immediately that \eqref{n-pre-Lie 1} hold.
On the other hand, by the same way we can prove the second identity \eqref{n-pre-Lie 2} of $n$-pre-Lie algebra structure.
\end{proof}

\begin{cor}
Let $(A,\nwarrow, \nearrow)$ be a $n$-L-dendriform algebra. Then the bracket defined in  \eqref{crochet} defines a $n$-Lie algebra structure on $A$ which is called the associated   $n$-Lie algebra of  $(A,\nwarrow, \nearrow)$.
\end{cor}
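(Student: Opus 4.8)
The plan is to derive the statement directly from the two results already in hand, Proposition~\ref{3LDendTo3PreLie} and Proposition~\ref{pro:comm}, so that essentially no new computation is needed.

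First I would invoke item~1 of Proposition~\ref{3LDendTo3PreLie}: from the $n$-L-dendriform algebra $(A,\nwarrow,\nearrow)$ the horizontal bracket $\{\cdot,\cdots,\cdot\}^h$ defined by \eqref{accolade horizintal} is already a genuine $n$-pre-Lie algebra structure on $A$. This is exactly the input I want, because the bracket $[\cdot,\cdots,\cdot]^C$ written in \eqref{crochet} is, termwise, nothing other than the induced $n$-commutator \eqref{eq:ncc} attached to the $n$-pre-Lie product $\{\cdot,\cdots,\cdot\}^h$; the two defining sums coincide.

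Second I would apply Proposition~\ref{pro:comm} to the $n$-pre-Lie algebra $(A,\{\cdot,\cdots,\cdot\}^h)$. That proposition guarantees precisely that the induced $n$-commutator of any $n$-pre-Lie algebra is skew-symmetric and satisfies the Filippov--Jacobi identity \eqref{eq:de1}, hence defines a $n$-Lie algebra. Chaining the two steps shows that $[\cdot,\cdots,\cdot]^C$ equips $A$ with a $n$-Lie algebra structure, which is the assertion of the corollary.

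Finally I would note that item~2 of Proposition~\ref{3LDendTo3PreLie} makes the vertical bracket $\{\cdot,\cdots,\cdot\}^v$ of \eqref{accolade vertical} an equally valid $n$-pre-Lie structure, so the identical argument applied to its $n$-commutator again yields a $n$-Lie algebra; the second equality in \eqref{crochet} says that these two $n$-commutators agree, so both routes produce one and the same associated $n$-Lie algebra. I do not expect any genuine obstacle in the corollary itself, since it is a formal consequence of the preceding propositions. The only point lying beneath the surface is the double equality in \eqref{crochet}, i.e. that the horizontal and vertical $n$-pre-Lie algebras share the same sub-adjacent bracket; were one to establish that from scratch, the hard part would be expanding both sides via \eqref{accolade horizintal} and \eqref{accolade vertical} and cancelling the $\nearrow$-contributions using the skew-symmetry of $\nearrow$ in its middle arguments, but this verification is not required for the statement at hand.
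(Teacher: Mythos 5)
Your proposal is correct and is essentially the argument the paper intends: the corollary is stated without proof precisely because it follows immediately by combining Proposition~\ref{3LDendTo3PreLie} (the horizontal bracket is a $n$-pre-Lie structure) with Proposition~\ref{pro:comm} (the induced $n$-commutator of any $n$-pre-Lie algebra is a $n$-Lie bracket), and the bracket in \eqref{crochet} is exactly that $n$-commutator. Your closing remark about the vertical route and the agreement of the two $n$-commutators is a correct and appropriate observation, consistent with the double equality asserted in \eqref{crochet}.
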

The following Proposition is obvious.
\begin{pro}
Let $(A,\nwarrow, \nearrow)$ be a $n$-L-dendriform algebra. Define $L_{\nwarrow},R_{ \nearrow}: \otimes^{n-1} A \to gl(A)$ and $\rho:\wedge^{n-1} \rightarrow gl(A)$ by
\begin{equation*}
 L_{\nwarrow}(x_1,\cdots,x_{n-1})x_n=\nwarrow(x_1,\cdots,x_{n-1},x_n),\quad  R_{ \nearrow}(x_1,\cdots,x_{n-1})x_n= \nearrow(x_n,x_1,\cdots,x_{n-1}),\end{equation*} 
\begin{equation*}
    \rho(x_1,\cdots,x_{n-1})x_n=\nwarrow(x_1,\cdots,x_n)+ \sum_{i=1}^{n-1}(-1)^{i}\nearrow(x_n,x_1,\cdots,\widehat{x_i},\cdots,x_{n-1},x_i)\end{equation*}
 for all $x_i \in A$, $1\leq i \leq n.$
Then
\begin{enumerate}
\item[(1)] $(A,L_{\nwarrow},R_{ \nearrow})$ is a representation of  its  horizontal associated   $n$-pre-Lie algebra $(A,\{\c,\cdots,\c\}^h)$  and $(A,L_{\nwarrow},-L_{ \nearrow})$ is a representation of  its  vertical associated   $n$-pre-Lie algebra $(A,\{\c,\cdots,\c\}^v)$.
\item[(2)]  $(A,L_{\nwarrow})$ is a representation of its associated $n$-Lie algebra $(A^c,[\c,\cdots,\c]^C)$.
\item[(3)] $(A,\rho)$ is a representation of its associated $n$-Lie algebra $(A^c,[\c,\cdots,\c]^C)$.
\end{enumerate}
\end{pro}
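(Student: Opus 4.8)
The plan is to read the representation axioms off directly from the six defining identities of an $n$-L-dendriform algebra, under the dictionary that turns $\nwarrow$ into left multiplication and $\nearrow$ into right multiplication: for all $x_i\in A$ one has $L_{\nwarrow}(x_1,\cdots,x_{n-1})x_n=\nwarrow(x_1,\cdots,x_n)$ and $R_{\nearrow}(x_1,\cdots,x_{n-1})x_n=\nearrow(x_n,x_1,\cdots,x_{n-1})$. The first step is to compute the auxiliary map $\mu$ attached to the pair $(L_{\nwarrow},R_{\nearrow})$ in Definition \ref{defrep}; substituting the two definitions and comparing with \eqref{accolade vertical} gives
\begin{equation*}
\mu(x_1,\cdots,x_{n-1})x_n=L_{\nwarrow}(x_1,\cdots,x_{n-1})x_n+\sum_{i=1}^{n-1}(-1)^{i}R_{\nearrow}(x_1,\cdots,\widehat{x_i},\cdots,x_{n-1},x_i)x_n=\{x_1,\cdots,x_n\}^v,
\end{equation*}
so that $\mu$ is exactly the left multiplication of the vertical product. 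Since by \eqref{crochet} the horizontal and vertical products induce the same sub-adjacent bracket $[\cdot,\cdots,\cdot]^C$, every occurrence of $[\cdot,\cdots,\cdot]^C$ in the axioms is unambiguous.

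For item (1), horizontal case, I would match each condition of Definition \ref{defrep} (after evaluating on a test vector) with one dendriform identity via this dictionary: identity \eqref{3-L-dendriform2} becomes the representation axiom \eqref{RepNLie1} and \eqref{3-L-dendriform1} becomes \eqref{RepNLie2}, so that $L_{\nwarrow}$ represents $A^c$; the first bullet of Definition \ref{defrep} becomes \eqref{3-L-dendriform3} (here the term $r(y_1,\cdots,y_{n-1})\mu(x_1,\cdots,x_{n-1})$ produces the summand $\nearrow(\{x_1,\cdots,x_{n-1},y_n\}^v,y_1,\cdots,y_{n-1})$ precisely because $\mu=\{\cdot,\cdots,\cdot\}^v$ as computed above); the second, third and fourth bullets become \eqref{3-L-dendriform4}, \eqref{3-L-dendriform5} and \eqref{3-L-dendriform6} respectively. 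Thus the six conditions making $(A,L_{\nwarrow},R_{\nearrow})$ a representation of $(A,\{\cdot,\cdots,\cdot\}^h)$ are literally the six defining identities of the $n$-L-dendriform algebra, which is the sense in which the statement is ``obvious''. The vertical case $(A,L_{\nwarrow},-L_{\nearrow})$, where $L_{\nearrow}(x_1,\cdots,x_{n-1})x_n=\nearrow(x_1,\cdots,x_{n-1},x_n)$, is handled identically, now reading the conditions of Definition \ref{defrep} relative to the product \eqref{accolade vertical}; the minus sign in $-L_{\nearrow}$ is exactly what absorbs the sign discrepancy between \eqref{accolade horizintal} and \eqref{accolade vertical}, so that the same six identities again furnish the six required conditions.

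Items (2) and (3) then come for free. Item (2) is immediate, since by definition the $l$-component of any representation of an $n$-pre-Lie algebra is a representation of its sub-adjacent $n$-Lie algebra; applied to $(A,L_{\nwarrow},R_{\nearrow})$ this says $(A,L_{\nwarrow})$ represents $A^c$ (equivalently, it is \eqref{3-L-dendriform1}--\eqref{3-L-dendriform2} read as \eqref{RepNLie1}--\eqref{RepNLie2}). For item (3), comparison with \eqref{accolade vertical} shows that $\rho$ is nothing but the left multiplication of the vertical $n$-pre-Lie algebra of Proposition \ref{3LDendTo3PreLie}(2); since the left multiplication of any $n$-pre-Lie algebra represents its sub-adjacent $n$-Lie algebra (the unnumbered proposition following Definition \ref{pro:subadj}), and that algebra equals $A^c$ by \eqref{crochet}, $(A,\rho)$ represents $A^c$.

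The only real work, and hence the main obstacle, is purely bookkeeping: verifying that the index ranges and the signs $(-1)^{n-i}$ and $(-1)^{i+1}$ appearing in the bullets of Definition \ref{defrep} line up with those in \eqref{3-L-dendriform3}--\eqref{3-L-dendriform6} once the test vector is inserted, and checking in the vertical case that replacing $R_{\nearrow}$ by $-L_{\nearrow}$ indeed compensates the sign changes coming from \eqref{accolade vertical} versus \eqref{accolade horizintal}. No conceptual difficulty arises beyond this careful matching, which is why the statement is recorded as obvious.
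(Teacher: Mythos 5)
The paper records this proposition without proof (``The following Proposition is obvious''), and your proposal supplies exactly the verification the authors have in mind: the identification $\mu=\{\cdot,\cdots,\cdot\}^v$, the term-by-term matching of the four bullets of Definition \ref{defrep} with \eqref{3-L-dendriform3}--\eqref{3-L-dendriform6} and of the $n$-Lie representation axioms \eqref{RepNLie1}--\eqref{RepNLie2} with \eqref{3-L-dendriform2}--\eqref{3-L-dendriform1}, and the reduction of items (2) and (3) to the observation that $\rho$ is the left multiplication of the vertical product whose sub-adjacent bracket coincides with $[\cdot,\cdots,\cdot]^C$ by \eqref{crochet}, are all correct. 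The one place where ``handled identically'' oversells things is the vertical case: $-L_{\nearrow}$ inserts the module variable into the \emph{last} slot of $\nearrow$ whereas $R_{\nearrow}$ inserts it into the \emph{first}, so the correspondence with \eqref{3-L-dendriform3}--\eqref{3-L-dendriform6} is a genuinely different substitution (a different choice of which argument of $\nearrow$ plays the role of the test vector), not the horizontal computation with a sign flipped; it remains routine bookkeeping, but it needs its own pass rather than being absorbed by the minus sign alone.
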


\begin{rmk}
In the sense of the above conclusion (1), a $n$-L-dendriform algebra is understood as
a $n$-ary  algebra structure whose left and right multiplications give a bimodule structure on the underlying vector space of the $n$-pre-Lie algebra defined by certain commutators. It can be regarded as the \textbf{rule} of introducing the notion of $n$-L-dendriform algebra, which more generally, is the \textbf{rule}
of introducing the notions of $n$-pre-Lie algebras,  the Loday algebras and their Lie, Jordan and alternative  algebraic analogues.
\end{rmk}

\begin{thm}
 Let $(A,\{\c,\cdots,\c\})$ be a $n$-pre-Lie algebra and $(V,l,r)$ be a representation. Suppose that  $T: V \to A$ is an $\mathcal{O}$-operator associated to $(V,l,r)$. Then there exists a $n$-L-dendriform algebra structure on $V$ given by
 \begin{align}
  \nwarrow(u_1,\cdots,u_n)=l(Tu_1,\cdots,Tu_{n-1})u_n  , \quad  \nearrow(u_1,\cdots,u_n)= r(Tu_2,\cdots,Tu_n)u_1, 
 \end{align}
 $\forall u_i \in V,\;1\leq i \leq n.$
 Therefore, there exists two associated $n$-pre-Lie algebra structures on $V$ and $T$ is a homomorphism of $n$-pre-Lie algebras. Moreover, $T(V)=\{T(v)| v \in V \}$ is a $n$-pre-Lie subalgebra of $(A,\{\c,\cdots,\c\})$ and there is an induced $n$-L-dendriform algebra structure on $T(V)$ given by
  \begin{align}
  \nwarrow(Tu_1,\cdots,Tu_n)=T(  \nwarrow(u_1,\cdots,u_n))  , \quad  \nearrow(Tu_1,\cdots,Tu_n)= T( \nearrow(u_1,\cdots,u_n)),
 \end{align}
$ \forall u_i \in V,\;1\leq i \leq n.$
\end{thm}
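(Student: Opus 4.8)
The plan is to verify the six defining identities \eqref{3-L-dendriform1}--\eqref{3-L-dendriform6} directly, after first recording two computations that make all of them collapse onto structure already available on $A$. Substituting $\nwarrow(u_1,\cdots,u_n)=l(Tu_1,\cdots,Tu_{n-1})u_n$ and $\nearrow(u_1,\cdots,u_n)=r(Tu_2,\cdots,Tu_n)u_1$ into \eqref{accolade horizintal} gives
$$\{u_1,\cdots,u_n\}^h=l(Tu_1,\cdots,Tu_{n-1})u_n+\sum_{i=1}^{n-1}(-1)^{i+1}r(Tu_1,\cdots,\widehat{Tu_i},\cdots,Tu_n)u_i,$$
which is precisely the expression appearing inside the $\mathcal{O}$-operator identity \eqref{O-op n-pre-Lie}; hence $T\{u_1,\cdots,u_n\}^h=\{Tu_1,\cdots,Tu_n\}$. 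Substituting into \eqref{accolade vertical} gives instead
$$\{u_1,\cdots,u_n\}^v=\widetilde{\rho}(Tu_1,\cdots,Tu_{n-1})u_n=\mu(Tu_1,\cdots,Tu_{n-1})u_n,$$
with $\mu$ as in Definition \ref{defrep}. Finally, antisymmetrising the first relation through \eqref{crochet} yields $T[u_1,\cdots,u_n]^C=[Tu_1,\cdots,Tu_n]^C$, so that $T$ carries the sub-adjacent bracket on $V$ to that of $A^c$.

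With these three facts in hand, each of the six axioms becomes a representation identity evaluated at the points $Tx_j,Ty_j$. Concretely, I would apply $T$ to every inner occurrence of $[\cdot]^C$ and rewrite \eqref{3-L-dendriform2} and \eqref{3-L-dendriform1} as the two defining identities \eqref{RepNLie1} and \eqref{RepNLie2} of the $A^c$-representation $l$ (available because $l$ is, by definition of a representation of a $n$-pre-Lie algebra, a representation of the sub-adjacent $n$-Lie algebra). Likewise I would rewrite \eqref{3-L-dendriform3}, \eqref{3-L-dendriform4}, \eqref{3-L-dendriform5}, \eqref{3-L-dendriform6} respectively as the four bullet conditions of Definition \ref{defrep}, using $T\{u_1,\cdots,u_n\}^h=\{Tu_1,\cdots,Tu_n\}$ to turn each inner $\{x_1,\cdots,x_{n-1},y_{n-1}\}^h$-type term into a genuine product of $A$, and using $\{u_1,\cdots,u_n\}^v=\mu(Tu_1,\cdots,Tu_{n-1})u_n$ to recognise the $\mu$-terms. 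The substitution is mechanical once one keeps in mind that the distinguished slot of $\nearrow$ sits in front, i.e.\ $\nearrow(u_n,u_1,\cdots,u_{n-1})=r(Tu_1,\cdots,Tu_{n-1})u_n$.

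The remaining assertions then follow formally. Proposition \ref{3LDendTo3PreLie} applied to $(V,\nwarrow,\nearrow)$ produces the two associated $n$-pre-Lie structures $\{\cdot,\cdots,\cdot\}^h$ and $\{\cdot,\cdots,\cdot\}^v$; the identity $T\{u_1,\cdots,u_n\}^h=\{Tu_1,\cdots,Tu_n\}$ says exactly that $T$ is a homomorphism from the horizontal $n$-pre-Lie algebra onto its image, whence $T(V)$ is closed under $\{\cdot,\cdots,\cdot\}$ and is a $n$-pre-Lie subalgebra of $A$. The transported operations on $T(V)$ are well defined exactly as in the Corollary following Proposition \ref{pro:npreLieT}, and they satisfy the $n$-L-dendriform axioms by applying $T$ to the corresponding axioms on $V$. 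I expect the only genuine difficulty to be bookkeeping rather than anything conceptual: matching signs and the positions of the omitted and distinguished arguments (the front slot of $\nearrow$, the split double sums distinguishing $\{\cdot,\cdots,\cdot\}^h$ from $\{\cdot,\cdots,\cdot\}^v$, and the consistency of the two expressions for $[\cdot]^C$ in \eqref{crochet}), since the entire content is carried by the two collapsing identities above together with the four axioms of Definition \ref{defrep}.
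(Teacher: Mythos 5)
Your proposal is correct and follows essentially the same route as the paper's proof: substitute the definitions of $\nwarrow$ and $\nearrow$, use the $\mathcal{O}$-operator identity to obtain $T\{u_1,\cdots,u_n\}^h=\{Tu_1,\cdots,Tu_n\}$ and $T[u_1,\cdots,u_n]^C_V=[Tu_1,\cdots,Tu_n]^C$, and then reduce each $n$-L-dendriform axiom to the corresponding condition on $(V,l,r)$ (the paper carries this out explicitly for \eqref{3-L-dendriform1} via \eqref{RepNLie2} and for \eqref{3-L-dendriform3} via the $\mu$-condition, leaving the rest "by the same way"). Your additional observation that $\{u_1,\cdots,u_n\}^v=\mu(Tu_1,\cdots,Tu_{n-1})u_n$ is accurate and is implicitly what the paper uses when it replaces $\{u_1,\cdots,u_{n-1},v_n\}^v_V$ by $\mu(Tu_1,\cdots,Tu_{n-1})v_n$.
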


\begin{proof}
Let $u_i \in V$,$1\leq i \leq n$ .  Define $\{\c,\cdots,\c\}_V^h, \{\c,\cdots,\c\}_V^v,[\c,\cdots,\c]^C_V:\otimes^n V \to V$ by
\begin{align*}
& \{u_1,\cdots,u_n\}_V^h=\nwarrow(u_1,\cdots,u_n)+ \sum_{i=1}^{n-1}(-1)^{i+1}\nearrow(u_i,u_1,\cdots,\widehat{u_i},\cdots,u_n),\\
& \{u_1,\cdots,u_n\}_V^v=\nwarrow(u_1,\cdots,u_n)+ \sum_{i=1}^{n-1}(-1)^{i}\nearrow(u_n,u_1,\cdots,\widehat{u_i},\cdots,u_{n-1},u_i),\\
&[u_1,\cdots,u_n]^C_V=\sum_{i=1}^{n}(-1)^{n-i}\{u_1,\cdots,\widehat{u_i},\cdots,u_n,u_i\}_V^h.
\end{align*}
Using identity \eqref{O-op n-pre-Lie}, we have
\begin{align*}
T\{u_1,\cdots,u_n\}_V^h &=T(\nwarrow(u_1,\cdots,u_n)+ \sum_{i=1}^{n-1}(-1)^{i+1}\nearrow(u_i,u_1,\cdots,\widehat{u_i},\cdots,u_n))\\
&=T(l(Tu_1,\cdots,Tu_{n-1})(u_n)+\sum_{i=1}^{n-1}(-1)^{i+1}r(Tu_1,\cdots,\widehat{Tu_i},\cdots,Tu_n)(u_i))\\
&=\{Tu_1,\cdots,Tu_n\}^h
\end{align*}
and
\begin{align*}
&T[u_1,\cdots,u_n]^C_V=T( \sum_{i=1}^{n}(-1)^{n-i}\{u_1,\cdots,\widehat{u_i},\cdots,u_n,u_i\}_V^h)\\
&=\sum_{i=1}^{n}(-1)^{n-i}\{Tu_1,\cdots,\widehat{Tu_i},\cdots,Tu_n,Tu_i\}^h=[Tu_1,\cdots,Tu_n]^C.
\end{align*}
It is straightforward that
$$\nwarrow(u_{\sigma(1)},\cdots,u_{\sigma(n-1)},u_n)=  l(Tu_{\sigma(1)},\cdots,Tu_{\sigma(n-1)})u_n=Sgn(\sigma)\nwarrow(u_1,\cdots,u_{n-1},u_n),$$
$\forall \sigma \in S_{n-1}$. Furthermore, for any $u_i,v_i \in V,\ 1 \leq i \leq n$, we have
\begin{align*}
&\nwarrow(u_1,\cdots,u_{n-1},\nwarrow(v_1,\cdots,v_{n}))-  \nwarrow(v_1,\cdots,v_{n-1},\nwarrow(u_1,\cdots,u_{n-1},v_{n})) \\
&-\sum_{i=1}^{n-1}\nwarrow(v_1,\cdots,v_{i-1},[u_1,\cdots,u_{n-1},v_i]_V^C,v_{i+1},\cdots,v_{n}),\\
&= l(Tu_1,\cdots,Tu_{n-1})l(Tv_1,\cdots,Tv_{n-1})uv{n}-  l(Tv_1,\cdots,Tv_{n-1})l(Tu_1,\cdots,Tu_{n-1})v_{n} \\
&-\sum_{i=1}^{n-1}l(Tv_1,\cdots,Tv_{i-1},T[u_1,\cdots,u_{n-1},v_i]_V^C,Tv_{i+1},\cdots,Tv_{n-1})v_{n}\\
&=\Big( [l(Tu_1,\cdots,Tu_{n-1}),l(Tv_1,\cdots,Tv_{n-1})]\\
&-\sum_{i=1}^{n-1}l(Tv_1,\cdots,Tv_{i-1},[Tu_1,\cdots,Tu_{n-1},Tv_i]^C,Tv_{i+1},\cdots,Tv_{n-1})\Big)v_{n}\\
&=0.
\end{align*}
This implies that \eqref{3-L-dendriform1} holds.
Moreover, \eqref{3-L-dendriform3} holds. Indeed,
\begin{align*}
&\nwarrow(u_1,\cdots,u_{n-1}, \nearrow(v_{n},v_1,\cdots,v_{n-1}))- \nearrow(v_{n},v_1,\cdots,v_{n-2},\{u_1,\cdots,u_{n-1},v_{n-1}\}_V^h) \nonumber\\
&-\nearrow(\{u_1,\cdots,u_{n-1},v_{n}\}_V^v,v_1,\cdots,v_{n-1}) \nonumber\\ 
&-\sum_{i=1}^{n-2}\nearrow(v_{n},v_1,\cdots,v_{i-1},[u_1,\cdots,u_{n-1},v_i]_V^C,v_{i+1},\cdots,v_{n-1}), \\
&=l(Tu_1,\cdots,Tu_{n-1})r(Tv_1,\cdots,Tv_{n-1})v_{n}-r(Tv_1,\cdots,Tv_{n-2},T\{u_1,\cdots,u_{n-1},v_{n-1}\}^h_V)v_{n}\\
&-r(Tv_1,\cdots,Tv_{n-1})\{u_1,\cdots,u_{n-1},v_{n}\}^v_V \\
&-\sum_{i=1}^{n-2}r(Tv_1,\cdots,Tv_{i-1},T[u_1,\cdots,u_{n-1},v_i]_V^C,Tv_{i+1},\cdots,Tv_{n-1})v_{n}\\
&=l(Tu_1,\cdots,Tu_{n-1})r(Tv_1,\cdots,Tv_{n-1})v_{n}-r(Tv_1,\cdots,Tv_{n-2},\{Tu_1,\cdots,Tu_{n-1},Tv_{n-1}\}^h)v_{n}\\
&-r(Tv_1,\cdots,Tv_{n-1})\mu(Tu_1,\cdots,Tu_{n-1})v_{n} \\
&-\sum_{i=1}^{n-2}r(Tv_1,\cdots,Tv_{i-1},[Tu_1,\cdots,Tu_{n-1},Tv_i]^C,Tv_{i+1},\cdots,Tv_{n-1})v_{n}\\
&=0.
\end{align*}
The other conclusions follow immediately by the same way.
\end{proof}
\begin{cor}\label{CorLDendViaRB}
   Let $(A,\{\c,\cdots,\c\})$ be a $n$-pre-Lie algebra and  $P: A \to A$ is a Rota-Baxter operator of weight $0$. Then there exists a $n$-L-dendriform algebra structure on $A$ given by
 \begin{align}\label{LDendViaRB}
  \nwarrow(x_1,\cdots,x_n)=\{P(x_1),\cdots,P(x_{n-1}),x_n\}  , \quad  \nearrow(x_1,\cdots,x_n)= \{x_1,P(x_2),\cdots,P(x_{n})\},  \end{align}
for all $x_i \in A$,$\;1\leq i \leq n.$
\end{cor}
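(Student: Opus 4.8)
The plan is to recognize this corollary as the specialization of the preceding $\mathcal{O}$-operator theorem to the adjoint representation. Recall that the paper already observes that $(A,L,R)$ is the adjoint representation of a $n$-pre-Lie algebra on itself, and that a Rota-Baxter operator $P$ of weight zero is by definition precisely an $\mathcal{O}$-operator $T:V\to A$ in the case $(V,l,r)=(A,L,R)$. Thus I would invoke the previous theorem with $V=A$, $(l,r)=(L,R)$, and $T=P$; the theorem then immediately yields a $n$-L-dendriform structure on $A$ with
\begin{align*}
\nwarrow(x_1,\cdots,x_n)=L(Px_1,\cdots,Px_{n-1})x_n,\qquad
\nearrow(x_1,\cdots,x_n)=R(Px_2,\cdots,Px_n)x_1.
\end{align*}

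It then remains only to rewrite these two formulas in terms of the $n$-pre-Lie product. Using the defining relations for the left and right multiplications, namely $L(y_1,\cdots,y_{n-1})z=\{y_1,\cdots,y_{n-1},z\}$ from Eq.~\eqref{eq:R} and $R(y_1,\cdots,y_{n-1})z=\{z,y_1,\cdots,y_{n-1}\}$, the first expression becomes $\{P(x_1),\cdots,P(x_{n-1}),x_n\}$, and the second becomes $\{x_1,P(x_2),\cdots,P(x_n)\}$, which are exactly the claimed formulas in \eqref{LDendViaRB}. The only point requiring care is the index bookkeeping in the right-multiplication term: one must check that the single argument $x_1$ lands in the first slot of the bracket while the $n-1$ arguments $Px_2,\cdots,Px_n$ fill the remaining slots, which is precisely what the convention $R(y_1,\cdots,y_{n-1})z=\{z,y_1,\cdots,y_{n-1}\}$ delivers.

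There is essentially no genuine obstacle here, since all the $n$-L-dendriform axioms \eqref{3-L-dendriform1}--\eqref{3-L-dendriform6} are furnished for free by the theorem; the corollary is purely a translation of notation. Consequently I would present the argument as a one-line application of the theorem followed by the substitution above, rather than re-verifying any of the dendriform identities directly.
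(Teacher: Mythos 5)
Your proposal is correct and is exactly how the paper obtains this corollary: the paper gives no separate proof, treating it as the immediate specialization of the preceding $\mathcal{O}$-operator theorem to the adjoint representation $(V,l,r)=(A,L,R)$ with $T=P$, followed by the same translation $L(Px_1,\dots,Px_{n-1})x_n=\{P(x_1),\dots,P(x_{n-1}),x_n\}$ and $R(Px_2,\dots,Px_n)x_1=\{x_1,P(x_2),\dots,P(x_n)\}$. Your attention to the slot placement in the right multiplication is the only point of care, and you handle it correctly.
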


\begin{pro}\label{3Lden by invert O-op}
 Let $(A,\{\c,\cdots,\c\})$ be a $n$-pre-Lie algebra.  Then there exists a compatible
$n$-L-dendriform  algebra if and only if there exists an invertible $\mathcal{O}$-operator on $A$.
\end{pro}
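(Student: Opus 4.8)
The plan is to prove the two implications separately, each time reducing to a result already established in this section, so that the whole argument is essentially a matter of unwinding the defining relation \eqref{O-op n-pre-Lie} of an $\mathcal{O}$-operator.

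For sufficiency ($\Leftarrow$), suppose $T\colon V\to A$ is an invertible $\mathcal{O}$-operator associated to a representation $(V,l,r)$ of $(A,\{\cdot,\cdots,\cdot\})$. The preceding Theorem endows $V$ with an $n$-L-dendriform structure and shows that $T$ transports it to $T(V)$ via $\nwarrow(Tu_1,\cdots,Tu_n)=T(\nwarrow(u_1,\cdots,u_n))$ and $\nearrow(Tu_1,\cdots,Tu_n)=T(\nearrow(u_1,\cdots,u_n))$. Since $T$ is invertible we have $T(V)=A$, and writing $u_i=T^{-1}(x_i)$ yields an $n$-L-dendriform structure on the whole of $A$, namely
\[
\nwarrow(x_1,\cdots,x_n)=Tl(x_1,\cdots,x_{n-1})T^{-1}(x_n),\qquad \nearrow(x_1,\cdots,x_n)=Tr(x_2,\cdots,x_n)T^{-1}(x_1).
\]
It then remains to see that this structure is compatible, i.e.\ that its horizontal bracket \eqref{accolade horizintal} recovers $\{\cdot,\cdots,\cdot\}$. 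This is exactly the identity $T\{u_1,\cdots,u_n\}^h_V=\{Tu_1,\cdots,Tu_n\}$ established inside the proof of the preceding Theorem, which is nothing but the defining relation \eqref{O-op n-pre-Lie}; applied with $u_i=T^{-1}(x_i)$ it gives $\{x_1,\cdots,x_n\}^h=\{x_1,\cdots,x_n\}$, as desired.

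For necessity ($\Rightarrow$), suppose $(A,\nwarrow,\nearrow)$ is a compatible $n$-L-dendriform algebra, so by definition its horizontal bracket equals $\{\cdot,\cdots,\cdot\}$. I would take the operator to be the identity $\mathrm{id}\colon A\to A$ and the representation to be $(A,L_{\nwarrow},R_{\nearrow})$, which by the Proposition preceding the Theorem is a representation of the horizontal associated $n$-pre-Lie algebra $(A,\{\cdot,\cdots,\cdot\}^h)=(A,\{\cdot,\cdots,\cdot\})$. Substituting $T=\mathrm{id}$ into \eqref{O-op n-pre-Lie} and unfolding $L_{\nwarrow}(x_1,\cdots,x_{n-1})x_n=\nwarrow(x_1,\cdots,x_n)$ together with $R_{\nearrow}(x_1,\cdots,\widehat{x_i},\cdots,x_n)x_i=\nearrow(x_i,x_1,\cdots,\widehat{x_i},\cdots,x_n)$, the right-hand side collapses precisely to $\{x_1,\cdots,x_n\}^h$, which by compatibility is $\{x_1,\cdots,x_n\}$. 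Hence $\mathrm{id}$ satisfies \eqref{O-op n-pre-Lie}, and it is obviously invertible, providing the required $\mathcal{O}$-operator.

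Neither implication presents a genuine obstacle: both follow by invoking the representation $(A,L_{\nwarrow},R_{\nearrow})$ and the $\mathcal{O}$-operator theorem proved just above. The only point demanding a little care is the index bookkeeping in the sufficiency direction, when verifying that the transported horizontal bracket reproduces the original $n$-pre-Lie multiplication term by term; this is, however, automatic from the $\mathcal{O}$-operator identity and requires no separate computation.
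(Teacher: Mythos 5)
Your proof is correct and follows essentially the same route as the paper: sufficiency by transporting the $n$-L-dendriform structure from $V$ to $T(V)=A$ via the preceding Theorem and checking compatibility with the $\mathcal{O}$-operator identity \eqref{O-op n-pre-Lie}, and necessity by exhibiting the identity map as an invertible $\mathcal{O}$-operator. In the converse direction you are in fact more precise than the paper, which writes the representation as $(A,L,R)$; your explicit choice of $(A,L_{\nwarrow},R_{\nearrow})$ (the representation of the horizontal $n$-pre-Lie algebra coming from the compatible $n$-L-dendriform structure) is the correct reading, since for the adjoint representation of $(A,\{\cdot,\cdots,\cdot\})$ itself the identity map would not generally satisfy \eqref{O-op n-pre-Lie}.
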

 \begin{proof}
 Let $T$ be an invertible $\mathcal{O}$-operator of $A$ associated to a representation $(V, l,r)$.
Then there exists a $n$-L-dendriform algebra structure on $V$ defined by
 \begin{align*}
  \nwarrow(u_1,\cdots,u_n)=l(Tu_1,\cdots,Tu_{n-1})u_n  , \quad  \nearrow(u_1,\cdots,u_n)= r(Tu_2,\cdots,Tu_n)u_1, \forall u_i \in V.
 \end{align*}
In addition there exists a $n$-L-dendriform algebra structure on $T(V)=A$ given by
 \begin{align*}
  \nwarrow(Tu_1,\cdots,Tu_n)=T(l(Tu_1,\cdots,Tu_{n-1})u_n)  , \quad  \nearrow(Tu_1,\cdots,Tu_n)= T(r(Tu_2,\cdots,Tu_n)u_1), \forall u_i \in V.
 \end{align*}
 If we put $x_1=Tu_1,\cdots,x_n=Tu_n$, we get
  \begin{align*}
  \nwarrow(x_1,\cdots,x_n)=T(l(x_1,\cdots,x_{n-1})T^{-1}(x_n))  , \quad  \nearrow(x_1,\cdots,x_n)= T(r(x_2,\cdots,x_n)T^{-1}(x_1)), \forall x_i \in A.
 \end{align*}
It is a compatible $n$-L-dendriform algebra structure on $A$. Indeed,
\begin{align*}
&  \nwarrow(x_1,\cdots,x_n)+ \sum_{i=1}^{n-1}(-1)^{i+1}\nearrow(x_i,x_1,\cdots,\widehat{x_i},\cdots,x_n) \\
&= T(l(x_1,\cdots,x_{n-1})T^{-1}(x_n)) +\sum_{i=1}^{n-1}(-1)^{i+1} T(r(x_1,\cdots,\widehat{x_i},\cdots,x_n)T^{-1}(x_i)) \\
&= \{TT^{-1}(x_1),\cdots,TT^{-1}(x_n)\}=\{x_1,\cdots,x_n\}.
\end{align*}
Conversely,   let $(A,\{\c,\cdots,\c\})$ be a $n$-pre-Lie algebra and $(A,\nwarrow, \nearrow)$ its compatible $n$-L-dendriform algebra.  Then the identity map $id: A \to A$ is an $\mathcal{O}$-operator of  $(A,\{\c,\cdots,\c\})$ associated to $(A,L,R)$.
 \end{proof}

\begin{defi}
Let $(A,\{\c,\cdots,\c\})$ be a $n$-pre-Lie algebra and $B$ be a symmetric bilinear form on $A$.  We say that $B$ is closed if it satisfies
\begin{align}\label{symplectic bilinear form}
B(\{x_1,\cdots,x_n\},w)+B(x_n,[x_1,\cdots,x_{n-1},w]^C)\nonumber\\
-\sum_{i=1}^{n-1}(-1)^{i+1}B(x_i,\{w,x_1,\cdots,\widehat{x_i},\cdots,x_{n-1},x_n\})=0,
\end{align}
for any $x_i,w \in A$, $\;1\leq i \leq n$.  If in addition $B$ is non-degenerate, then $B$ is called a pseudo-Hessian structure on $A$.

A $n$-pre-Lie algebra $(A,\{\c,\cdots,\c\})$  is called pseudo-Hessian  if it is
equipped with a  symmetric, closed and  nondegenerate form $B$.  It is
denoted by  $(A,\{\c,\cdots,\c\},B)$.
\end{defi}

\begin{pro}
Let $(A,\{\c,\cdots,\c\},B)$ be a pseudo-Hessian $n$-pre-Lie algebra. Then there exists a compatible $n$-L-dendriform algebra structure on $A$ given by
\begin{align}\label{3Ldend by  form}
&B(\nwarrow(x_1,\cdots,x_n),w)=-B(x_n,[x_1,\cdots,x_{n-1},w]^C),\nonumber\\
&B( \nearrow(x_1,\cdots,x_n),w)=B(x_1,\{w,x_2,\cdots,x_n\}), 
\end{align}
$\forall x_i,w \in A,\;1\leq i \leq n$.
\end{pro}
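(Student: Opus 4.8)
The plan is to produce, out of the nondegenerate symmetric form $B$, an invertible $\mathcal{O}$-operator and then invoke Proposition \ref{3Lden by invert O-op}, which supplies a compatible $n$-L-dendriform structure precisely when such an operator exists. Since $B$ is nondegenerate I would define the musical map $B^{\flat}:A\rightarrow A^{*}$ by $\langle B^{\flat}(x),y\rangle=B(x,y)$ and set $T=(B^{\flat})^{-1}:A^{*}\rightarrow A$, an invertible linear map (the symmetry of $B$ guarantees $B^{\flat}$ is unambiguous, i.e. self-adjoint). The representation to pair $T$ with is the dual $(A^{*},ad^{*},-R^{*})$ of the adjoint representation $(A,L,R)$ of the $n$-pre-Lie algebra; by the results of Section 3 this is a genuine representation, since $(\widetilde{\rho}^{*},-r^{*})=(ad^{*},-R^{*})$ when $(l,r)=(L,R)$, with $ad(x_1,\cdots,x_{n-1})w=[x_1,\cdots,x_{n-1},w]^{C}$.

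The heart of the argument is to check that $T$ is an $\mathcal{O}$-operator associated to $(A^{*},ad^{*},-R^{*})$. Writing $x_i=T\xi_i$, so that $\xi_i=B^{\flat}x_i$, and applying $B^{\flat}=T^{-1}$ to both sides of the defining identity \eqref{O-op n-pre-Lie} and then pairing with an arbitrary $w\in A$, the identity for $T$ reads
\begin{align*}
B(\{x_1,\cdots,x_n\},w)&=\langle ad^{*}(x_1,\cdots,x_{n-1})B^{\flat}x_n,w\rangle\\
&\quad-\sum_{i=1}^{n-1}(-1)^{i+1}\langle R^{*}(x_1,\cdots,\widehat{x_i},\cdots,x_n)B^{\flat}x_i,w\rangle.
\end{align*}
Using the dual convention \eqref{eq:dual} together with $ad(x_1,\cdots,x_{n-1})w=[x_1,\cdots,x_{n-1},w]^{C}$ and $R(x_1,\cdots,\widehat{x_i},\cdots,x_n)w=\{w,x_1,\cdots,\widehat{x_i},\cdots,x_n\}$, the right-hand side collapses to $-B(x_n,[x_1,\cdots,x_{n-1},w]^{C})+\sum_{i=1}^{n-1}(-1)^{i+1}B(x_i,\{w,x_1,\cdots,\widehat{x_i},\cdots,x_n\})$. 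Hence the $\mathcal{O}$-operator identity for $T$ is equivalent to exactly the closedness condition \eqref{symplectic bilinear form}, which holds by hypothesis; therefore $T$ is an invertible $\mathcal{O}$-operator.

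With $T$ available, Proposition \ref{3Lden by invert O-op} yields a compatible $n$-L-dendriform structure on $T(A^{*})=A$, and the construction there gives $\nwarrow(x_1,\cdots,x_n)=T\big(ad^{*}(x_1,\cdots,x_{n-1})B^{\flat}x_n\big)$ and $\nearrow(x_1,\cdots,x_n)=-T\big(R^{*}(x_2,\cdots,x_n)B^{\flat}x_1\big)$. To match these with \eqref{3Ldend by form} I would pair each with an arbitrary $w$: applying $B^{\flat}$ and unwinding $ad^{*}$ gives $B(\nwarrow(x_1,\cdots,x_n),w)=-B(x_n,[x_1,\cdots,x_{n-1},w]^{C})$, while unwinding $R^{*}$ gives $B(\nearrow(x_1,\cdots,x_n),w)=B(x_1,\{w,x_2,\cdots,x_n\})$, exactly as claimed. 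I expect the only real obstacle to be the sign and index bookkeeping in reducing the $\mathcal{O}$-operator identity to \eqref{symplectic bilinear form}; however, once the dual pairing convention \eqref{eq:dual} is fixed this reduction is forced, so no genuinely new computation beyond the closedness hypothesis is needed.
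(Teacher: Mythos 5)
Your proposal is correct and follows essentially the same route as the paper: define $T=(B^{\flat})^{-1}:A^{*}\to A$ (the paper writes this as $\langle T^{-1}x,y\rangle=B(x,y)$), show via the closedness condition \eqref{symplectic bilinear form} that $T$ is an invertible $\mathcal{O}$-operator associated to the dual representation $(A^{*},ad^{*},-R^{*})$, invoke Proposition \ref{3Lden by invert O-op}, and unwind the dual pairings to recover \eqref{3Ldend by  form}. The only difference is that you spell out the reduction of the $\mathcal{O}$-operator identity to \eqref{symplectic bilinear form}, which the paper merely asserts; your sign bookkeeping there is consistent with the convention \eqref{eq:dual}.
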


\begin{proof}
Define the linear map $T: A^* \to A$ by $\langle T^{-1}x,y\rangle=B(x,y)$.  Using Eq. \eqref{symplectic bilinear form}, we obtain that $T$ is an invertible $\mathcal{O}$-operator  on $A$ associated to the dual representation $(A^*, ad^*,-R^*)$.   By Proposition \ref{3Lden by invert O-op}, there exists a compatible $n$-L-dendriform algebra structure given by
  \begin{align*}
  \nwarrow(x_1,\cdots,x_n)=T(ad^*(x_1,\cdots,x_{n-1})T^{-1}(x_n))  , \quad  \nearrow(x_1,\cdots,x_n)= -T(R^*(x_2,\cdots,x_n)T^{-1}(x_1)).
 \end{align*}
Then we have
\begin{align*}
   &B(\nwarrow(x_1,\cdots,x_n),w)=  B(T(ad^*(x_1,\cdots,x_{n-1})T^{-1}(x_n)) ,w)=\langle ad^*(x_1,\cdots,x_{n-1})T^{-1}(x_n),w\rangle \\
    &\quad \quad \quad \quad \quad \quad \quad \quad\;\; =-\langle T^{-1}(x_n),[x_1,\cdots,x_{n-1},w]^C\rangle=-B(x_n,[x_1,\cdots,x_{n-1},w]^C)
\end{align*}
and
\begin{align*}
    &B( \nearrow(x_1,\cdots,x_n),w)=  -B(T(R^*(x_2,\cdots,x_n)T^{-1}(x_1))) ,w)=-\langle R^*(x_2,\cdots,x_n)T^{-1}(x_1),w\rangle \\
    &\quad \quad \quad \quad \quad \quad \quad \quad \;= \langle T^{-1}(x_1),\{w,x_2,\cdots,x_n\}\rangle=B(x_1,\{w,x_2,\cdots,x_n\}).
\end{align*}
The proof is finished.
\end{proof}

\begin{cor}
Let $(A,\{\c,\cdots,\c\},B)$ be a pseudo-Hessian $n$-pre-Lie algebra and let $(A^c,[\c,\cdots,\c]^C)$ be its associated $n$-Lie algebra.  Then there exists a $n$-pre-Lie algebraic structure $(A,\{\c,\cdots,\c\}')$ on $A$ given by
\begin{align}
B(\{x_1,\cdots,x_n\}',w)=-B(x_n,[x_1,\cdots,x_{n-1},w]^C)+\sum_{i=1}^{n-1}(-1)^iB(x_n,\{w,x_1,\cdots,\widehat{x_i},\cdots,x_{n-1},x_i\}).
\end{align}
\end{cor}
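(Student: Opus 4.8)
The plan is to recognise $\{\cdot,\cdots,\cdot\}'$ as the \emph{vertical} $n$-pre-Lie structure attached to the $n$-L-dendriform algebra that the pseudo-Hessian form already produces, so that the corollary follows from Proposition \ref{3LDendTo3PreLie} with no fresh verification of the $n$-pre-Lie axioms.

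First I would note that since $B$ is non-degenerate, prescribing the scalar $B(\{x_1,\cdots,x_n\}',w)$ for every $w\in A$ determines $\{x_1,\cdots,x_n\}'$ uniquely; hence the displayed formula does define an $n$-linear operation, and the task reduces to identifying it. To that end I invoke the immediately preceding proposition, which endows $A$ with a compatible $n$-L-dendriform structure $(\nwarrow,\nearrow)$ satisfying
\begin{align*}
B(\nwarrow(x_1,\cdots,x_n),w)&=-B(x_n,[x_1,\cdots,x_{n-1},w]^C),\\
B(\nearrow(x_1,\cdots,x_n),w)&=B(x_1,\{w,x_2,\cdots,x_n\}).
\end{align*}

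Next I would rewrite the two summands on the right-hand side of the corollary through these identities. The first term is exactly $B(\nwarrow(x_1,\cdots,x_n),w)$. For the $i$-th piece of the second term, I set $y_1=x_n$ and $(y_2,\cdots,y_n)=(x_1,\cdots,\widehat{x_i},\cdots,x_{n-1},x_i)$ in the $\nearrow$-identity; since the middle block then carries exactly $n-2$ entries, the identity applies and gives
\[
B(x_n,\{w,x_1,\cdots,\widehat{x_i},\cdots,x_{n-1},x_i\})=B(\nearrow(x_n,x_1,\cdots,\widehat{x_i},\cdots,x_{n-1},x_i),w).
\]
Substituting and cancelling $w$ by non-degeneracy then yields
\[
\{x_1,\cdots,x_n\}'=\nwarrow(x_1,\cdots,x_n)+\sum_{i=1}^{n-1}(-1)^{i}\nearrow(x_n,x_1,\cdots,\widehat{x_i},\cdots,x_{n-1},x_i),
\]
which is verbatim the vertical bracket of \eqref{accolade vertical}. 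Proposition \ref{3LDendTo3PreLie}(2) then guarantees that this bracket is an $n$-pre-Lie structure, finishing the proof.

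I do not anticipate a genuine obstacle here: the argument collapses to a single index check confirming that the signs $(-1)^i$ and the placement of $x_i$ in the last slot of $\nearrow$ reproduce \eqref{accolade vertical} exactly. The only point needing care is verifying that the omitted-index block in each $\nearrow$ term has the correct cardinality $n-2$, so that the $\nearrow$-identity genuinely applies; once that bookkeeping is settled, the identification $\{\cdot,\cdots,\cdot\}'=\{\cdot,\cdots,\cdot\}^v$ is immediate.
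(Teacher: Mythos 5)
Your proposal is correct and is exactly the intended derivation: the paper states this as an unproved corollary of the preceding proposition together with Proposition \ref{3LDendTo3PreLie}(2), and your identification of $\{\cdot,\cdots,\cdot\}'$ with the vertical bracket $\{\cdot,\cdots,\cdot\}^v$ of the $n$-L-dendriform structure induced by the pseudo-Hessian form is precisely the missing link. The index and sign bookkeeping you flag does check out, since the middle block of each $\nearrow$-term has $n-2$ entries as required.
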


\begin{pro}
Let $\{P_1,P_2\}$ be a pair of  commuting Rota-Baxter operators (of weight zero) on a $n$-Lie algebra $(A, [\c,\cdots,\c])$. Then there exists a $n$-L-dendriform algebra structure on $A$ defined by
\begin{align*}
&\nwarrow(x_1,\cdots,x_n)=[P_1P_2(x_1),\cdots,P_1P_2(x_{n-1}),x_n],\\ &\nearrow(x_1,\cdots,x_n)=[P_1(x),P_1P_2(x_2),\cdots,P_1P_2(x_{n-1}),P_2(x_n)], \forall x_i \in A.
\end{align*}
\end{pro}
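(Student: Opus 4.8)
The plan is to realize the stated operations by composing the two construction results already established, namely Proposition~\ref{commuting rota-baxter op} and Corollary~\ref{CorLDendViaRB}, rather than verifying the six defining identities \eqref{3-L-dendriform1}--\eqref{3-L-dendriform6} of a $n$-L-dendriform algebra directly. The point is that the expressions for $\nwarrow$ and $\nearrow$ have the shape produced by applying the Rota-Baxter construction of Corollary~\ref{CorLDendViaRB} to a $n$-pre-Lie algebra that is itself manufactured from the $n$-Lie algebra by one of the two operators.

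First I would use $P_1$ to build the intermediate $n$-pre-Lie algebra. By Proposition~\ref{commuting rota-baxter op}, since $(P_1,P_2)$ is a pair of commuting Rota-Baxter operators of weight zero on $(A,[\c,\cdots,\c])$, the operator $P_1$ endows $A$ with the $n$-pre-Lie structure
\begin{equation*}
\{x_1,\cdots,x_n\}=[P_1(x_1),\cdots,P_1(x_{n-1}),x_n],\quad \forall x_i\in A,
\end{equation*}
and, crucially, $P_2$ is then a Rota-Baxter operator of weight zero on this $n$-pre-Lie algebra $(A,\{\c,\cdots,\c\})$. The commutativity $P_1P_2=P_2P_1$ is exactly the hypothesis that makes this step go through.

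Next I would apply Corollary~\ref{CorLDendViaRB} to the $n$-pre-Lie algebra $(A,\{\c,\cdots,\c\})$ together with its Rota-Baxter operator $P_2$. This immediately produces a $n$-L-dendriform algebra structure on $A$ given by
\begin{align*}
\nwarrow(x_1,\cdots,x_n)&=\{P_2(x_1),\cdots,P_2(x_{n-1}),x_n\},\\
\nearrow(x_1,\cdots,x_n)&=\{x_1,P_2(x_2),\cdots,P_2(x_n)\}.
\end{align*}
It then remains only to substitute the definition of $\{\c,\cdots,\c\}$ into these two expressions. Using $\{y_1,\cdots,y_n\}=[P_1(y_1),\cdots,P_1(y_{n-1}),y_n]$, the first line becomes $\nwarrow(x_1,\cdots,x_n)=[P_1P_2(x_1),\cdots,P_1P_2(x_{n-1}),x_n]$, and the second line becomes $\nearrow(x_1,\cdots,x_n)=[P_1(x_1),P_1P_2(x_2),\cdots,P_1P_2(x_{n-1}),P_2(x_n)]$, which are precisely the claimed formulas.

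Since the whole argument is a composition of two results proved earlier in the paper, I do not expect any genuine obstacle; the only point requiring care is the bookkeeping in the final substitution, making sure the arguments of $P_1$ and $P_2$ land in the correct slots so that the output matches the stated $\nwarrow$ and $\nearrow$ exactly (in particular the single $P_1$ on the first argument of $\nearrow$). As a consistency check one may also observe that, when $P_2$ happens to be invertible, this structure agrees with the compatible $n$-L-dendriform algebra obtained from an invertible $\mathcal{O}$-operator as in Proposition~\ref{3Lden by invert O-op}.
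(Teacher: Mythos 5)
Your proposal is correct and is exactly the paper's argument: the paper's proof consists of the single sentence that the result ``follows immediately from Proposition \ref{commuting rota-baxter op} and Corollary \ref{CorLDendViaRB},'' which is precisely the composition you carry out (with the final substitution written out explicitly, and correctly). No further comment is needed.
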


\begin{proof}
It follows immediately from Proposition \ref{commuting rota-baxter op} and Corollary \ref{CorLDendViaRB}.
\end{proof}


\end{document}